\newtheorem{theorem}{Theorem}[section]
\newtheorem{corollary}[theorem]{Corollary}
\newtheorem{lemma}[theorem]{Lemma}
\newtheorem{proposition}[theorem]{Proposition}
\title{Probability distributions with binomial moments}
\author{Wojciech M{\l}otkowski, Karol A. Penson}
\thanks{
W.~M. is supported by the Polish
National Science Center grant No. 2012/05/B/ST1/00626.
K.~A.~P. acknowledges support
from Agence Nationale de la Recherche (Paris, France) under
Program PHYSCOMB No. ANR-08-BLAN-0243-2.}
\address{Instytut Matematyczny,
Uniwersytet Wroc{\l}awski,
Plac~Grunwaldzki~2/4,
50-384 Wroc{\l}aw, Poland}
\email{mlotkow@math.uni.wroc.pl}
\address{Laboratoire de Physique Th\'{e}orique de la Mati\`{e}re
Condens\'{e}e (LPTMC), Universit\'{e} Pierre et Marie Curie, CNRS UMR
7600, Tour 13 - 5i\`{e}me \'{e}t., Bo\^{i}te Courrier 121, 4 place
Jussieu, F 75252 Paris Cedex 05, France}
\email{penson@lptl.jussieu.fr}
\subjclass[2010]{Primary 44A60; Secondary 33C20, 46L54}
\keywords{Mellin convolution, Meijer $G$-function, free probability}
\begin{document}

\begin{abstract}
We prove that if $p\ge1$ and $-1\le r\le p-1$ then the binomial sequence
$\binom{np+r}{n}$, $n=0,1,\ldots$, is positive definite and is the moment sequence
of a probability measure $\nu(p,r)$, whose support is contained in
$\left[0,p^p(p-1)^{1-p}\right]$. If $p>1$ is a rational number
and $-1<r\le p-1$ then $\nu(p,r)$ is absolutely continuous
and its density function $V_{p,r}$ can be expressed in terms of the Meijer $G$-function.
In particular cases $V_{p,r}$ is an elementary function.
We show that for $p>1$ the measures $\nu(p,-1)$ and $\nu(p,0)$ are certain free convolution
powers of the Bernoulli distribution.
Finally we prove that the binomial sequence $\binom{np+r}{n}$ is positive definite
if and only if either $p\ge1$, $-1\le r\le p-1$ or $p\le0$, $p-1\le r\le 0$.
The measures corresponding to the latter case are reflections of the
former ones.
\end{abstract}

\maketitle

\section{Introduction}

This paper is devoted to \textit{binomial sequences,} i.e. sequences of the form
\begin{equation}\label{aintbinomial}
\left\{\binom{np+r}{n}\right\}_{n=0}^{\infty},
\end{equation}
where $p,r$ are real parameters. Here the generalized binomial symbol is defined by: $\binom{a}{n}:=a(a-1)\ldots(a-n+1)/n!\,$.
For example, the numbers $\binom{2n}{n}$ are moments of the \textit{arcsine law}
\[
\frac{1}{\pi\sqrt{x(4-x)}}\chi_{(0,4)}(x)\,dx
\]
(see \cite{balnev}).
We are going to prove that if $p\ge1$, $-1\le r\le p-1$, then the sequence (\ref{aintbinomial})
is positive definite and the support of the corresponding probability measure $\nu(p,r)$
is contained in the interval $[0,c(p)]$, where
\begin{equation}\label{aintceodpe}
c(p):=\frac{p^p}{(p-1)^{p-1}}.
\end{equation}
For $r=-1$ this measure has an atom at $x=0$.
If in addition $p>1$ is a rational number, $-1<r\le p-1$,
then $\nu(p,r)$ is absolutely continuous
and the density function $V_{p,r}$ can be expressed in terms
of the Meijer $G$- (and consequently of the generalized hypergeometric) functions.
In particular cases $V_{p,r}$ is an elementary function.

Similar problems were studied in \cite{mlotkowski2010,mpz2012} for \textit{Raney sequences}
\begin{equation}\label{aintraney}
\left\{\binom{np+r}{n}\frac{r}{np+r}\right\}_{n=0}^{\infty}
\end{equation}
(called \textit{Fuss sequences} if $r=1$).
It was shown that if $p\ge1$ and $0\le r\le p$ then the Raney sequence (\ref{aintraney})
is positive definite and the corresponding probability measure $\mu(p,r)$ has compact support
contained in $[0,c(p)]$.
In particular $\mu(2,1)$ is the Marchenko-Pastur distribution,
which plays an important role in the theory of random matrices,
see \cite{vdn,ns,pezy}.
Moreover, for $p>0$ we have $\mu(p,1)=\mu(2,1)^{\boxtimes p-1}$,
where ``$\boxtimes$" denotes the multiplicative free convolution.

The paper is organized as follows.
First we study the generating function $\mathcal{D}_{p,r}$
of the sequence (\ref{aintbinomial}). For particular cases,
namely for $p=2,3,3/2$, we express $\mathcal{D}_{p,r}$ as an elementary function.
In the next part we prove that if $p\ge1$ and $-1\le r\le p-1$
then the sequence is positive definite
and the support of the corresponding probability measure $\nu(p,r)$
is contained in $[0,c(p)]$.
If $p>1$ is rational and $-1<r\le p-1$  then
$\nu(p,r)$ can be expressed as the Mellin convolution
of modified beta measures, in particular
$\nu(p,r)$ is absolutely continuous, while $\nu(p,-1)$ has an atomic part at $0$.
Note that the positive definiteness of the binomial sequence (\ref{aintbinomial})
was already proved in \cite{mlotkowski2010} under more
restrictive assumptions (namely, that $0\le r\le p-1$)
and the proof involved the multiplicative free and the monotonic convolution.

In the next section we study the density function
$V_{p,r}$ of the absolutely continuous measures
$\nu(p,r)$, where $p>1$ is rational, $-1<r\le p-1$.
We show that $V_{p,r}$ can be expressed as the Meijer
$G$-function, and therefore as linear combination of the
generalized hypergeometric functions.
In particular we derive an elementary formula for $p=2$.

Then we concentrate on the cases $p=3$ and $p=3/2$.
For particular choices of $r$
(namely, $r=0,1,2$ for $p=3$ and $r=-1/2,0,1/2$ for $r=3/2$)
we express $V_{p,r}$ as an elementary function
(Theorem~\ref{etwtrzy} and Theorem~\ref{etwtrzydrugie}).

Some of the sequences (\ref{aintbinomial}) have combinatorial
applications and appear in the Online Encyclopedia of Integer Sequences \cite{oeis}~(OEIS).
Perhaps the most important is $\binom{2n}{n}$ (A000984 in OEIS),
the moment sequence of the arcsine distribution.
The sequences $\binom{3n-1}{n}$, $\binom{3n}{n}$, $\binom{3n+1}{n}$
and $\binom{3n+2}{n}$ can be found in OEIS as A165817, A005809, A045721 and A025174
respectively. We also shed some light on sequence~A091527:~$\binom{3n/2-1/2}{n}4^n$,
as well as on A061162, the even numbered terms of the former
(see remarks following Theorem~\ref{etwtrzydrugie}).

In Section~6 we study various convolution relations involving the measures
$\nu(p,r)$ and $\mu(p,r)$. For example we show in Proposition~\ref{gpropbernoulli}
that the measures $\nu(p,-1)$ and $\nu(p,0)$ are certain free convolution
powers of the Bernoulli distribution.

In Section~7 we prove that the binomial sequence (\ref{aintbinomial}) is positive definite
if and only if either $p\ge1$, $-1\le r\le p-1$ or $p\le0$, $p-1\le r\le 0$.
The measures corresponding to the latter case are reflections of those
corresponding to the former one. Similarly, the Raney sequence (\ref{aintraney})
is positive definite if and only if either
$p\ge1$, $0\le r\le p$ or $p\le0$, $p-1\le r\le0$ or else $r=0$
(the case $r=0$ the corresponds to $\delta_{0}$).
Quite surprisingly, the proof involves the monotonic convolution
introduced by Muraki~\cite{muraki}.

Finally, we provide graphical representation for selected functions $V_{p,r}$.

\section{Generating functions}

In this part we are going to study the generating function
\begin{equation}\label{bgendgenfunct}
\mathcal{D}_{p,r}(z):=\sum_{n=0}^{\infty}\binom{np+r}{n}z^n
\end{equation}
(convergent in some neighborhood of $0$)
of the binomial sequence (\ref{aintbinomial}).
First we observe relations between the functions
$\mathcal{D}_{p,-1}$, $\mathcal{D}_{p,0}$ and $\mathcal{D}_{p,p-1}$.

\begin{proposition}\label{bgenpropddd}
For every $p\in\mathbb{R}$ we have
\begin{equation}\label{bgendeminusjeden}
\mathcal{D}_{p,-1}(z)=\frac{1}{p}+\frac{p-1}{p}\mathcal{D}_{p,0}(z)
\end{equation}
and
\begin{equation}\label{bgendepeminusjeden}
\mathcal{D}_{p,p-1}(z)=\frac{\mathcal{D}_{p,0}(z)-1}{pz}.
\end{equation}
\end{proposition}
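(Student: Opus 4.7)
The plan is to prove each identity by comparing coefficients of $z^n$ on both sides, using only elementary manipulations of the generalized binomial symbol $\binom{a}{n}=a(a-1)\cdots(a-n+1)/n!$.

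For (\ref{bgendeminusjeden}), I would first observe that for $n\geq 1$ we may factor out the last factor $np-n$ from the numerator of $\binom{np-1}{n}$ and reintroduce the missing factor $np$ to recover $\binom{np}{n}$, giving
\[
\binom{np-1}{n}=\frac{(np-1)(np-2)\cdots(np-n)}{n!}=\frac{np-n}{np}\binom{np}{n}=\frac{p-1}{p}\binom{np}{n}.
\]
The $n=0$ terms are both $1$, so writing $\mathcal{D}_{p,-1}(z)=1+\sum_{n\geq1}\binom{np-1}{n}z^n$ and inserting the ratio, I recover $1+\frac{p-1}{p}\bigl(\mathcal{D}_{p,0}(z)-1\bigr)=\frac{1}{p}+\frac{p-1}{p}\mathcal{D}_{p,0}(z)$.

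For (\ref{bgendepeminusjeden}), I would rewrite $np+p-1=(n+1)p-1$ and compare $\binom{(n+1)p-1}{n}$ with $\binom{(n+1)p}{n+1}$. A direct manipulation
\[
\binom{(n+1)p}{n+1}=\frac{(n+1)p}{n+1}\cdot\frac{\bigl((n+1)p-1\bigr)\cdots\bigl((n+1)p-n\bigr)}{n!}=p\binom{(n+1)p-1}{n}
\]
gives $\binom{np+p-1}{n}=\frac{1}{p}\binom{(n+1)p}{n+1}$. Substituting $m=n+1$ in the series,
\[
\mathcal{D}_{p,p-1}(z)=\frac{1}{pz}\sum_{m\geq1}\binom{mp}{m}z^{m}=\frac{\mathcal{D}_{p,0}(z)-1}{pz}.
\]

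There is no real obstacle here; both identities reduce to a one-line algebraic relation between generalized binomial coefficients, together with an index shift. The only small point that warrants attention is the $n=0$ boundary term in the first identity (where $\binom{-1}{0}=\binom{0}{0}=1$, so the pointwise ratio $(p-1)/p$ is not valid at $n=0$), which is why the constant $1/p$ appears in (\ref{bgendeminusjeden}). The argument is purely formal and does not use convergence, so it holds for every $p\in\mathbb{R}$ as coefficients of formal power series, and therefore also as an identity of analytic functions in a neighbourhood of $0$ where $\mathcal{D}_{p,r}$ converges.
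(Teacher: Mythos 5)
Your proof is correct and follows essentially the same route as the paper, which cites exactly the two identities $\binom{mp-1}{m}=\frac{p-1}{p}\binom{mp}{m}$ and $\binom{np+p-1}{n}=\frac{1}{p}\binom{(n+1)p}{n+1}$ (its equation (\ref{bgenbinomialrelation})) and leaves the index shift and the $n=0$ boundary term implicit. You have merely written out those details, including the correct observation that the constant $\frac{1}{p}$ in (\ref{bgendeminusjeden}) comes from the $n=0$ term where the ratio $\frac{p-1}{p}$ does not apply.
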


\begin{proof}
These formulas are consequences of the following elementary identities:
\begin{equation}\label{bgenbinomialrelation}
\frac{1}{p-1}\binom{(n+1)p-1}{n+1}=\frac{1}{p}\binom{(n+1)p}{n+1}=\binom{np+p-1}{n},
\end{equation}
valid for $p\in\mathbb{R}$, $n=0,1,2,\ldots$.
\end{proof}

It turns out that $\mathcal{D}_{p,r}$ is related
to the generating function
\begin{equation}\label{bgenbgenfunct}
\mathcal{B}_{p}(z):=\sum_{n=0}^{\infty}\binom{np+1}{n}\frac{z^n}{np+1}
\end{equation}
of the Fuss numbers.
This function satisfies equation
\begin{equation}
\mathcal{B}_{p}(z)=1+z\cdot \mathcal{B}_{p}(z)^{p},
\end{equation}
with the initial value $\mathcal{B}_{p}(0)=1$ (5.59 in \cite{gkp}),
and Lambert's formula
\begin{equation}
\mathcal{B}_{p}(z)^{r}=\sum_{n=0}^{\infty}
\binom{np+r}{n}\frac{r\cdot z^n}{np+r}.
\end{equation}

Since
\begin{equation}\label{bgenbdgenfunct}
\mathcal{D}_{p,r}(z)=\frac{\mathcal{B}_{p}(z)^{1+r}}{p-(p-1)\mathcal{B}_{p}(z)}
\end{equation}
(5.61 in \cite{gkp}), it is sufficient to study the functions $\mathcal{B}_{p}$.

The simplest cases for $\mathcal{B}_{p}$ are:
\begin{align*}
\mathcal{B}_{0}(z)&=1+z,\\
\mathcal{B}_{1}(z)&=\frac{1}{1-z},\\
\mathcal{B}_{-1}(z)&=\frac{1+\sqrt{1+4z}}{2},\\
\mathcal{B}_{2}(z)&=\frac{2}{1+\sqrt{1-4z}},\\
\mathcal{B}_{1/2}(z)&=\frac{2+z^2+z\sqrt{4+z^2}}{2},
\end{align*}
which lead to
\begin{align*}
\mathcal{D}_{0,r}(z)&=(1+z)^{r},\\
\mathcal{D}_{1,r}(z)&=(1-z)^{-1-r},\\
\mathcal{D}_{-1,r}(z)&=\frac{\left(\frac{1+\sqrt{1+4z}}{2}\right)^{1+r}}{\sqrt{1+4z}},\\
\mathcal{D}_{2,r}(z)&=\frac{\left(\frac{2}{1+\sqrt{1-4z}}\right)^r}{\sqrt{1-4z}},\\
\mathcal{D}_{1/2,r}(z)&=\frac{4\left(\frac{2+z^2+z\sqrt{4+z^2}}{2}\right)^{1+r}}{4+z^2+z\sqrt{4+z^2}}.
\end{align*}
These examples illustrate the following general rules:
\begin{align}
\mathcal{B}_{p}(z)&=\mathcal{B}_{1-p}(-z)^{-1},\label{bbreflection}\\
\mathcal{D}_{p,r}(z)&=\mathcal{D}_{1-p,-1-r}(-z).\label{bdreflection}
\end{align}

The rest of this section is devoted to the cases $p=3$ and $p=3/2$.

\subsection{The case $p=3$}

First we find $\mathcal{B}_{3}$.

\begin{proposition}\label{bgenpropb3}
For $|z|<4/27$ we have
\begin{equation}
\mathcal{B}_3(z)=\frac{3}{3\cos^2\alpha-\sin^2\alpha},
\end{equation}
where $\alpha=\frac{1}{3}\arcsin\left(\sqrt{27z/4}\right)$.
\end{proposition}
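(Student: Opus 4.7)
My plan is to view $\mathcal{B}_{3}(z)$ as the distinguished root of a cubic and to solve that cubic by the trigonometric method keyed to the triple-angle identity $\sin(3\alpha)=3\sin\alpha-4\sin^{3}\alpha$. Setting $p=3$ in the functional equation $\mathcal{B}_{p}(z)=1+z\,\mathcal{B}_{p}(z)^{p}$ shows that $w:=\mathcal{B}_{3}(z)$ satisfies
\begin{equation*}
zw^{3}-w+1=0.
\end{equation*}
The discriminant of this cubic (as a polynomial in $w$) is proportional to $z(4-27z)$, vanishing precisely at $z=0$ and $z=4/27$, which is why the stated formula is restricted to $|z|<4/27$.

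The key step is the trigonometric substitution $w=2\sin\alpha/\sqrt{3z}$. Inserting it into the cubic and multiplying through by $\sqrt{3z}$ reduces the equation to
\begin{equation*}
\sqrt{3z}=2\sin\alpha-\tfrac{8}{3}\sin^{3}\alpha=\tfrac{2}{3}\sin(3\alpha),
\end{equation*}
that is, $\sin(3\alpha)=\sqrt{27z/4}$, which is solved by $\alpha=\tfrac{1}{3}\arcsin\!\bigl(\sqrt{27z/4}\bigr)$ (principal branch). To bring $w$ into the announced closed form I would then compute
\begin{equation*}
w=\frac{2\sin\alpha}{\sqrt{3z}}=\frac{3\sin\alpha}{\sin(3\alpha)}=\frac{3}{3-4\sin^{2}\alpha}=\frac{3}{3\cos^{2}\alpha-\sin^{2}\alpha},
\end{equation*}
using $\sin(3\alpha)=\sin\alpha\,(3-4\sin^{2}\alpha)$ once more together with the Pythagorean identity to rewrite $3-4\sin^{2}\alpha$ as $3\cos^{2}\alpha-\sin^{2}\alpha$.

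The only delicate point is branch selection: the cubic has three roots, but only one satisfies the initial condition $\mathcal{B}_{3}(0)=1$. I would verify correctness by letting $z\to 0^{+}$: the principal $\arcsin$ sends $\alpha$ to $0$, and the right-hand side of the claimed formula tends to $3/(3-0)=1$, matching the initial value. Since $\sqrt{27z/4}$ maps the disk $|z|<4/27$ into $\{|\zeta|<1\}$, where the principal $\arcsin$ is holomorphic, both sides of the formula are holomorphic on that disk, so pointwise agreement near $0$ extends to the whole disk by the identity principle, finishing the argument.
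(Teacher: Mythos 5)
Your argument is correct, but it follows a genuinely different route from the paper. The paper first identifies $\mathcal{B}_{3}(z)$ with the hypergeometric function ${}_{2}F_{1}\!\left(\tfrac13,\tfrac23;\tfrac32\,\middle|\,\tfrac{27z}{4}\right)$ by computing the Fuss coefficients $\binom{3n+1}{n}\frac{1}{3n+1}$ in terms of Pochhammer symbols, and then invokes the known evaluation ${}_{2}F_{1}\!\left(a,1-a;\tfrac32\,\middle|\,\sin^{2}t\right)=\frac{\sin((2a-1)t)}{(2a-1)\sin t}$ with $a=2/3$, $t=3\alpha$, landing on the same expression $3\sin\alpha/\sin 3\alpha$. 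You instead start from the functional equation $\mathcal{B}_{3}=1+z\mathcal{B}_{3}^{3}$ and solve the cubic $zw^{3}-w+1=0$ by the trigonometric (Vi\`ete) substitution $w=2\sin\alpha/\sqrt{3z}$, which correctly yields $\sin 3\alpha=\sqrt{27z/4}$ and $w=3\sin\alpha/\sin 3\alpha$; your discriminant computation $z(4-27z)$ and your handling of branch selection and analytic continuation are sound, and the degenerate behaviour at $z=0$ (where two roots escape to infinity) confirms that the bounded root is the one with $\mathcal{B}_{3}(0)=1$. Your method is more elementary and self-contained, requiring no special-function identities; the paper's method has the advantage of simultaneously establishing the hypergeometric representation of $\mathcal{B}_{3}$, which the authors reuse immediately afterwards to express $\mathcal{D}_{3,r}$ and $\mathcal{B}_{3}^{r}$ as ${}_{3}F_{2}$ functions and to derive the hypergeometric identities (\ref{hyperformula1})--(\ref{hyperformula2}).
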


Note that both the maps $u\mapsto \cos^2\left(\frac{1}{3}\arcsin(u)\right)$
and  $u\mapsto \sin^2\left(\frac{1}{3}\arcsin(u)\right)$ are even,
hence involve only even powers of $u$ in their Taylor expansion.
Therefore the functions $u\mapsto \cos^2\left(\frac{1}{3}\arcsin(\sqrt{u})\right)$
and $u\mapsto \sin^2\left(\frac{1}{3}\arcsin(\sqrt{u})\right)$
are well defined and analytic on the disc $|u|<1$.

\begin{proof}
First we note that
\[
\binom{3n+1}{n}\frac{1}{3n+1}=\frac{(3n)!}{(2n+1)!n!}
=\frac{\left(\frac{1}{3}\right)_n\left(\frac{2}{3}\right)_n\left(\frac{3}{3}\right)_n 3^{3n}}
{\left(\frac{2}{2}\right)_n\left(\frac{3}{2}\right)_n 2^{2n}\cdot n!},
\]
where $(a)_n:=a(a+1)\ldots(a+n-1)$ is the \textit{Pochhammer symbol}.
This implies that
\[
\mathcal{B}_3(z)=
{}_{2}F_{1}\!\left(\left.\frac{1}{3},\frac{2}{3};\,\frac{3}{2}\right|\frac{27z}{4}\right).
\]
Now, applying the identity
\[{}_{2}F_{1}\!\left(\left.a,1-a;\,3/2\right|\sin^2 t\right)=\frac{\sin\left((2a-1)t\right)}{(2a-1)\sin t},
\]
(see 15.4.14 in \cite{olver}) with $t=3\alpha$ and $a=2/3$, we get
\[
\mathcal{B}_3(z)=
\frac{3\sin\alpha}{\sin 3\alpha}=\frac{3}{3\cos^2\alpha-\sin^2\alpha}.
\]
\end{proof}

Now we can give formula for $\mathcal{D}_{3,r}$.

\begin{corollary}\label{bgencord3}
For all $r\in\mathbb{R}$ we have
\begin{equation}
\mathcal{D}_{3,r}(z)
=\left(\cfrac{3}{3\cos^2\alpha-\sin^2\alpha}\right)^{r}\frac{1}{\cos^2\alpha-3\sin^2\alpha},
\end{equation}
where $\alpha=\frac{1}{3}\arcsin\left(\sqrt{27z/4}\right)$, $|z|<27/4$.
\end{corollary}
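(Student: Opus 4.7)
The plan is a direct specialization. Setting $p=3$ in identity~(\ref{bgenbdgenfunct}) gives
\[
\mathcal{D}_{3,r}(z)=\frac{\mathcal{B}_{3}(z)^{1+r}}{3-2\mathcal{B}_{3}(z)},
\]
so the entire task reduces to substituting the trigonometric expression
$\mathcal{B}_{3}(z)=3/(3\cos^{2}\alpha-\sin^{2}\alpha)$
supplied by Proposition~\ref{bgenpropb3} and carrying out an elementary simplification.

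The numerator immediately becomes $\bigl(3/(3\cos^{2}\alpha-\sin^{2}\alpha)\bigr)^{1+r}$. For the denominator I would place $3-2\mathcal{B}_{3}(z)$ over the common denominator $3\cos^{2}\alpha-\sin^{2}\alpha$, which yields the fraction with numerator $9\cos^{2}\alpha-3\sin^{2}\alpha-6$. Rewriting $6$ as $6(\cos^{2}\alpha+\sin^{2}\alpha)$ collapses this numerator to $3(\cos^{2}\alpha-3\sin^{2}\alpha)$. Dividing the two fractions, and peeling off one factor of $3/(3\cos^{2}\alpha-\sin^{2}\alpha)$ so that the exponent drops from $1+r$ down to $r$, produces exactly the formula stated in the corollary.

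Since the whole argument is algebraic once Proposition~\ref{bgenpropb3} is available, there is no real obstacle; the only subtlety worth flagging is the domain of validity. Proposition~\ref{bgenpropb3} is established on $|z|<4/27$ (which is consistent with $c(3)=27/4$ and hence with the radius of convergence of the series defining $\mathcal{D}_{3,r}$), so the identity is first obtained on that disc and then understood as an analytic identity in the parameter $\alpha$ wherever both sides remain well defined.
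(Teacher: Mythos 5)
Your proposal is correct and is exactly the argument the paper intends: the corollary is stated without proof precisely because it follows by substituting Proposition~\ref{bgenpropb3} into (\ref{bgenbdgenfunct}) with $p=3$ and simplifying $3-2\mathcal{B}_3(z)=3(\cos^2\alpha-3\sin^2\alpha)/(3\cos^2\alpha-\sin^2\alpha)$, just as you do. Your remark on the domain is also apt, since the $|z|<27/4$ in the corollary's statement appears to be a typo for $|z|<4/27$.
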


Now we observe that $\mathcal{D}_{3,r}(z)$ and $\mathcal{B}_{3}(z)^r$
can be also expressed as hypergeometric functions.

\begin{proposition}
For all $r\in\mathbb{R}$ and $|z|<4/27$ we have
\begin{align}
\mathcal{D}_{3,r}(z)
&={}_{3}F_{2}\!\left(\left.\frac{1+r}{3},\frac{2+r}{3},\frac{3+r}{3};\,
\frac{1+r}{2},\frac{2+r}{2}\right|\frac{27z}{4}\right),\\
\mathcal{B}_{3}(z)^r
&={}_{3}F_{2}\!\left(\left.\frac{r}{3},\frac{1+r}{3},\frac{2+r}{3};\,
\frac{1+r}{2},\frac{2+r}{2}\right|\frac{27z}{4}\right).
\end{align}
\end{proposition}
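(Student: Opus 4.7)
The plan is to verify both identities by comparing the coefficient of $z^n$ on each side. The key tool is Gauss's multiplication formula for the Pochhammer symbol,
\[
(a)_{kn}=k^{kn}\prod_{j=0}^{k-1}\left(\frac{a+j}{k}\right)_{n},
\]
which lets us collapse the three ``upper'' and two ``lower'' Pochhammer symbols appearing in the ${}_3F_2$ series into ordinary Pochhammer symbols of index $3n$ and $2n$ respectively. The powers of $2,3,4,27$ supplied by the argument $27z/4$ and by the multiplication formula are tuned to cancel exactly.

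For the first identity, the coefficient of $z^n$ in $\mathcal D_{3,r}(z)$ is by definition $\binom{3n+r}{n}$. On the right, the coefficient of $z^n$ is
\[
\frac{\bigl(\tfrac{1+r}{3}\bigr)_{n}\bigl(\tfrac{2+r}{3}\bigr)_{n}\bigl(\tfrac{3+r}{3}\bigr)_{n}}{\bigl(\tfrac{1+r}{2}\bigr)_{n}\bigl(\tfrac{2+r}{2}\bigr)_{n}\,n!}\left(\frac{27}{4}\right)^{n}.
\]
Applying the multiplication formula with $a=1+r$ and $k=3,2$ gives $\bigl(\tfrac{1+r}{3}\bigr)_{n}\bigl(\tfrac{2+r}{3}\bigr)_{n}\bigl(\tfrac{3+r}{3}\bigr)_{n}=(1+r)_{3n}/3^{3n}$ and $\bigl(\tfrac{1+r}{2}\bigr)_{n}\bigl(\tfrac{2+r}{2}\bigr)_{n}=(1+r)_{2n}/2^{2n}$. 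Using $2^{2n}\cdot 27^{n}=3^{3n}\cdot 4^{n}$, the coefficient reduces to $(1+r)_{3n}/\bigl[(1+r)_{2n}\,n!\bigr]$. Since $(1+r)_{3n}/(1+r)_{2n}=(2n+r+1)(2n+r+2)\cdots(3n+r)$ is a product of $n$ consecutive integers ending at $3n+r$, this equals exactly $\binom{3n+r}{n}$.

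The second identity is handled the same way, using Lambert's formula $\mathcal B_{p}(z)^{r}=\sum_{n}\binom{np+r}{n}\tfrac{r}{np+r}z^{n}$, so we must match the coefficient $\binom{3n+r}{n}\tfrac{r}{3n+r}$. The only change on the right is that the upper Pochhammer triple starts at $r/3$, and the multiplication formula with $a=r,\,k=3$ gives $\bigl(\tfrac{r}{3}\bigr)_{n}\bigl(\tfrac{1+r}{3}\bigr)_{n}\bigl(\tfrac{2+r}{3}\bigr)_{n}=(r)_{3n}/3^{3n}$. After the same cancellations, the coefficient becomes $(r)_{3n}/\bigl[(1+r)_{2n}\,n!\bigr]=r(2n+r+1)(2n+r+2)\cdots(3n+r-1)/n!$, which matches $\binom{3n+r}{n}\tfrac{r}{3n+r}$ after cancelling the factor $3n+r$. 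There is no real obstacle; the only thing one has to be careful about is the bookkeeping of the powers of $2,3,4,27$, which are arranged precisely to vanish.
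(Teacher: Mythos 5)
Your proof is correct and follows essentially the same route as the paper: the paper simply asserts the two Pochhammer-quotient identities for $\binom{3n+r}{n}$ and $\binom{3n+r}{n}\frac{r}{3n+r}$ and you verify them explicitly via the multiplication formula $(a)_{kn}=k^{kn}\prod_{j=0}^{k-1}\bigl(\tfrac{a+j}{k}\bigr)_{n}$, which is exactly the computation the authors label ``easy to check.'' (Only a cosmetic remark: the $n$ factors $(2n+r+1)\cdots(3n+r)$ are consecutive terms differing by $1$, not integers, since $r\in\mathbb{R}$.)
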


\begin{proof} It is easy to check that
\[
\binom{3n+r}{n}=\frac{\left(\frac{1+r}{3}\right)_n\left(\frac{2+r}{3}\right)_n\left(\frac{3+r}{3}\right)_n 3^{3n}}
{\left(\frac{1+r}{2}\right)_n\left(\frac{2+r}{2}\right)_n 2^{2n}\cdot n!}
\]
and
\[
\binom{3n+r}{n}\frac{r}{3n+r}=\frac{\left(\frac{r}{3}\right)_n\left(\frac{1+r}{3}\right)_n\left(\frac{2+r}{3}\right)_n 3^{3n}}
{\left(\frac{1+r}{2}\right)_n\left(\frac{2+r}{2}\right)_n 2^{2n}\cdot n!},
\]
which leads to the statement.
\end{proof}

As a byproduct we obtain two hypergeometric identities:

\begin{corollary}
For $a\in\mathbb{R}$, $|u|<1$ we have
\begin{align}
{}_{3}F_{2}\!\left(\left.a,a+\frac{1}{3},a+\frac{2}{3};\,\frac{3a}{2},\frac{3a+1}{2}\right|u\right)
&=\frac{\left(\cfrac{3}{3\cos^2 \alpha-\sin^2 \alpha}\right)^{3a-1}}{\cos^2 \alpha-3\sin^2\alpha},
\label{hyperformula1}\\
{}_{3}F_{2}\!\left(\left.a,a+\frac{1}{3},a+\frac{2}{3};\,\frac{3a+1}{2},\frac{3a+2}{2}\right|u\right)
&=\left(\cfrac{3}{3\cos^2 \alpha-\sin^2 \alpha}\right)^{3a},\label{hyperformula2}
\end{align}
where $\alpha=\frac{1}{3}\arcsin\sqrt{u}$.
\end{corollary}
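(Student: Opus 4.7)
The plan is to obtain both identities as direct specializations of the two hypergeometric representations of $\mathcal{D}_{3,r}$ and $\mathcal{B}_3^r$ established in the preceding Proposition, combined with the closed-form expressions for these generating functions from Corollary~\ref{bgencord3} and Proposition~\ref{bgenpropb3}. The only real content is matching parameters and performing a change of variable.

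For (\ref{hyperformula1}), I would set $r=3a-1$ in the hypergeometric formula for $\mathcal{D}_{3,r}(z)$. The upper parameters $(1+r)/3$, $(2+r)/3$, $(3+r)/3$ become $a$, $a+1/3$, $a+2/3$, while the lower parameters $(1+r)/2$, $(2+r)/2$ become $3a/2$, $(3a+1)/2$, exactly matching the left-hand side of (\ref{hyperformula1}). The right-hand side is then read off from Corollary~\ref{bgencord3} with $r=3a-1$, under the substitution $u=27z/4$, which makes $\alpha=\frac{1}{3}\arcsin(\sqrt{27z/4})=\frac{1}{3}\arcsin\sqrt{u}$.

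For (\ref{hyperformula2}), I would set $r=3a$ in the hypergeometric formula for $\mathcal{B}_3(z)^r$. The upper parameters $r/3$, $(1+r)/3$, $(2+r)/3$ become $a$, $a+1/3$, $a+2/3$, and the lower parameters $(1+r)/2$, $(2+r)/2$ become $(3a+1)/2$, $(3a+2)/2$, matching the left-hand side of (\ref{hyperformula2}). The right-hand side is immediate from Proposition~\ref{bgenpropb3}.

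The only mildly subtle point concerns the domain: the preceding Proposition gives the equality initially for $|z|<4/27$, corresponding to $|u|<1$. The remark following Proposition~\ref{bgenpropb3} already notes that the functions $u\mapsto\cos^2(\frac{1}{3}\arcsin\sqrt{u})$ and $u\mapsto\sin^2(\frac{1}{3}\arcsin\sqrt{u})$ are analytic on $|u|<1$, and both hypergeometric series converge there as well, so the identities hold throughout the unit disc without further argument. There is no genuine obstacle; the entire proof is a short parameter check followed by the change of variable.
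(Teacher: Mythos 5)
Your proposal is correct and matches the paper's intent exactly: the paper presents this corollary as a "byproduct" with no written proof, the implied argument being precisely your substitutions $r=3a-1$ into the $\mathcal{D}_{3,r}$ representation and $r=3a$ into the $\mathcal{B}_3^r$ representation, together with $u=27z/4$. Your parameter checks and the remark on the domain $|u|<1$ (equivalent to $|z|<4/27$) are accurate.
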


\textbf{Remark.}
One can check, that (\ref{hyperformula1}) and (\ref{hyperformula2}) are alternative versions of the following known formulas:
\begin{align}
{}_{3}F_{2}\!\left(\left.a,a+\frac{1}{3},a+\frac{2}{3};\,\frac{3a}{2},\frac{3a+1}{2}\right|\frac{-27z}{4(1-z)^3}\right)
&=\frac{(1-z)^{3a}}{2z+1},\\
{}_{3}F_{2}\!\left(\left.a,a+\frac{1}{3},a+\frac{2}{3};\,\frac{3a+1}{2},\frac{3a+2}{2}\right|\frac{-27z}{4(1-z)^3}\right)
&=(1-z)^{3a}
\end{align}
(7.4.1.28 and 7.4.1.29 in \cite{prudnikov3}).
Indeed, putting
\[
z=\frac{-4\sin^2\alpha}{3\cos^2\alpha-\sin^2\alpha}
\]
we have
\[
1-z=\frac{3}{3\cos^2\alpha-\sin^2\alpha},
\]
\[
\frac{-27z}{4(1-z)^3}=\sin^2\alpha\left(3\cos^2\alpha-\sin^2\alpha\right)^2=\sin^2\ 3\alpha
\]
and
\[
2z+1=\frac{3\left(\cos^2\alpha-3\sin^2\alpha\right)}{3\cos^2\alpha-\sin^2\alpha}.
\]

Let us mention here that the sequences $\binom{3n-1}{n}$, $\binom{3n}{n}$, $\binom{3n+1}{n}$
and $\binom{3n+2}{n}$ appear in OEIS as A165817, A005809, A045721 and A025174
respectively.

\subsection{The case $p=3/2$}

First we compute $\mathcal{B}_{3/2}$ in terms of hypergeometric functions.

\begin{lemma}\label{bgenlemmab32}
\[
\mathcal{B}_{3/2}(z)=
\frac{1-{}_{2}F_{1}\!\left(\left.\frac{-2}{3},\frac{-1}{3};\,\frac{-1}{2}\right|\frac{27z^2}{4}\right)}{3z^2}
+z\cdot{}_{2}F_{1}\!\left(\left.\frac{5}{6},\frac{7}{6};\,\frac{5}{2}\right|\frac{27z^2}{4}\right).
\]
\end{lemma}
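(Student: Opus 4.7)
The plan is to split the defining series
\[
\mathcal{B}_{3/2}(z)=\sum_{n=0}^{\infty}\binom{3n/2+1}{n}\frac{z^n}{3n/2+1}
\]
into its even ($n=2k$) and odd ($n=2k+1$) parts and identify each with one of the two summands on the right-hand side of the claim.

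For the even part I would use $\binom{3k+1}{2k}/(3k+1)=(3k)!/((2k)!(k+1)!)$ together with the standard factorial-to-Pochhammer identities $(3k)!=3^{3k}k!(1/3)_{k}(2/3)_{k}$ and $(2k)!=4^{k}k!(1/2)_{k}$ to rewrite the coefficient of $z^{2k}$ in the even part as
\[
\frac{(1/3)_{k}(2/3)_{k}}{(1/2)_{k}\,(k+1)!}\left(\tfrac{27}{4}\right)^{k}.
\]
Because the denominator carries $(k+1)!$ instead of $k!$, this is not itself a ${}_{2}F_{1}$. To match the claim I would separately compute the $z^{2k}$-coefficient of $(1-{}_{2}F_{1}(-2/3,-1/3;-1/2;27z^{2}/4))/(3z^{2})$, which amounts to extracting the $z^{2k+2}$-coefficient of the hypergeometric series, dividing by $-3$, and applying the Pochhammer shifts $(-2/3)_{k+1}=-\tfrac{2}{3}(1/3)_{k}$, $(-1/3)_{k+1}=-\tfrac{1}{3}(2/3)_{k}$ and $(-1/2)_{k+1}=-\tfrac{1}{2}(1/2)_{k}$; after the scalar prefactors and the extra $27/4$ collapse to $1$, the two coefficients agree for every $k\ge0$.

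For the odd part I would write
\[
\frac{1}{3k+5/2}\binom{3k+5/2}{2k+1}=\frac{\Gamma(3k+5/2)}{(2k+1)!\,\Gamma(k+5/2)},
\]
and expand $\Gamma(3k+5/2)=\Gamma(5/2)(5/2)_{3k}$. Grouping the $3k$ factors of $(5/2)_{3k}$ three at a time and extracting a factor of $3$ from each yields the residue-class factorisation
\[
(5/2)_{3k}=3^{3k}(5/6)_{k}(7/6)_{k}(3/2)_{k}.
\]
Combining this with $(2k+1)!=4^{k}k!(3/2)_{k}$ (so that the factor $(3/2)_{k}$ cancels) and $\Gamma(k+5/2)=\Gamma(5/2)(5/2)_{k}$ shows that the $z^{2k+1}$-coefficient of $\mathcal{B}_{3/2}(z)$ simplifies to $(5/6)_{k}(7/6)_{k}/((5/2)_{k}\,k!)\cdot(27/4)^{k}$, which is precisely the $z^{2k+1}$-coefficient of $z\cdot{}_{2}F_{1}(5/6,7/6;5/2;\,27z^{2}/4)$.

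The main obstacle is the bookkeeping in the even part, where the appearance of $(k+1)!$ in place of $k!$ in the natural series is what forces the slightly awkward combination $(1-F)/(3z^{2})$ on the right-hand side, and one must verify that the division by $3z^{2}$, the three Pochhammer shifts, and the factor $27/4$ all conspire to produce exactly the right prefactor. The odd part, by contrast, is a direct hypergeometric identification once the mod-$3$ factorisation of $(5/2)_{3k}$ is written down.
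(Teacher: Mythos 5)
Your proposal is correct and follows essentially the same route as the paper: split the series for $\mathcal{B}_{3/2}$ into even and odd parts and match coefficients term by term, with the even part identified against the $z^{2k+2}$-coefficient of the first hypergeometric series (divided by $-3$) and the odd part against $z\cdot{}_{2}F_{1}(5/6,7/6;5/2;\cdot)$. The only differences are notational — you use Pochhammer shifts such as $(-2/3)_{k+1}=-\tfrac{2}{3}(1/3)_{k}$ and the mod-$3$ factorisation of $(5/2)_{3k}$ where the paper writes out the products explicitly and works with double factorials — and the prefactor check $-\tfrac{1}{3}\cdot\tfrac{(-2/3)(-1/3)}{(-1/2)}\cdot\tfrac{27}{4}=1$ indeed closes the even case.
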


\begin{proof} If $n=2k$ then the coefficient at $z^n$ on the right hand side is
\[
-\frac{\left(\frac{-2}{3}\right)_{k+1}\left(\frac{-1}{3}\right)_{k+1}3^{3k+3}}
{3\left(\frac{-1}{2}\right)_{k+1}(k+1)!2^{2k+2}}
\]
\[
=\frac{-(-2)\cdot 1\cdot 4\cdot\ldots\cdot(3k-2)\cdot(-1)\cdot2\cdot5\cdot\ldots(3k-1)3^{k+1}}
{3(-1)\cdot 1\cdot 3\cdot\ldots\cdot(2k-1)(k+1)!2^{k+1}}
\]
\[
=\frac{1\cdot 4\cdot\ldots\cdot(3k-2)\cdot2\cdot5\cdot\ldots(3k-1)3^{k}}
{1\cdot 3\cdot\ldots\cdot(2k-1)(k+1)!2^{k}}
\]
\[
=\frac{(3k)!}{(2k)!(k+1)!}=\binom{3k+1}{2k}\frac{1}{3k+1}=\binom{3n/2+1}{n}\frac{1}{3n/2+1}.
\]

Now assume that $n=2k+1$. Then
\[
\binom{3n/2+1}{n}\frac{1}{3n/2+1}=\frac{(6k+3)(6k+1)(6k-1)\ldots(2k+5)}{2^{2k}(2k+1)!}
=\frac{(6k+3)!!}{2^{2k}(2k+3)!!(2k+1)!}.
\]
On the other hand
\[
\frac{\left(\frac{5}{6}\right)_{k}\left(\frac{7}{6}\right)_{k}3^{3k}}{\left(\frac{5}{2}\right)_{k}2^{2k}k!}
=\frac{5\cdot11\cdot\ldots\cdot(6k-1)\cdot 7\cdot13\cdot\ldots\cdot(6k+1)\cdot 3^k}{5\cdot 7\cdot\ldots(2k+3)2^{2k}k! 2^k}
\]
\[
=\frac{(6k+3)!!}{2^{2k}(2k+3)!!(2k+1)!!2^k k!}.
\]
Since $(2k+1)!! 2^k k!=(2k+1)!\,$, the proof is completed.
\end{proof}

Now we find formulas for these two hypergeometric functions.

\begin{lemma}
\begin{align}
{}_{2}F_{1}\!\left(\left.\frac{-2}{3},\frac{-1}{3};\,\frac{-1}{2}\right|u\right)
&=\frac{2}{3}\cos 2\beta+\frac{1}{3}\cos4\beta,\label{bhyperg1}\\
{}_{2}F_{1}\!\left(\left.\frac{5}{6},\frac{7}{6};\,\frac{5}{2}\right|u\right)
&=\frac{27\cos\beta\sin^3\beta}{\sin^3 3\beta},\label{bhyperg2}
\end{align}
where $\beta=\frac{1}{3}\arcsin\left(\sqrt{u}\right)$.
\end{lemma}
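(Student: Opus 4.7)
My plan is to derive both identities from Euler's differentiation rule
\[
\frac{d}{du}\,{}_{2}F_{1}(a,b;c;u)=\frac{ab}{c}\,{}_{2}F_{1}(a+1,b+1;c+1;u),
\]
shifting the parameters until each ${}_2F_1$ reduces to one of the classical trigonometric evaluations listed in \S15.4 of \cite{olver}. Since both sides of (\ref{bhyperg1}) and (\ref{bhyperg2}) equal $1$ at $u=0$, it is enough to verify that their $u$-derivatives agree, and then integrate.

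For (\ref{bhyperg1}) one application of Euler's rule turns the LHS into $-\tfrac{4}{9}\,{}_{2}F_{1}(1/3,2/3;1/2;u)$. Identity 15.4.12 of \cite{olver}, namely ${}_{2}F_{1}(a,1-a;1/2;\sin^{2}\phi)=\cos((1-2a)\phi)/\cos\phi$, applied with $a=2/3$ and $\phi:=\arcsin\sqrt{u}=3\beta$, evaluates this to $\cos\beta/\cos 3\beta$. Differentiating the RHS directly uses $d\beta/du=1/(6\sin 3\beta\cos 3\beta)$ together with the sum-to-product identity $\sin 2\beta+\sin 4\beta=2\sin 3\beta\cos\beta$, and yields exactly the same $-\tfrac{4}{9}\cos\beta/\cos 3\beta$; this closes the case.

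For (\ref{bhyperg2}) I apply Euler's rule twice to obtain
\[
{}_{2}F_{1}(5/6,7/6;5/2;u)=-54\,\frac{d}{du}\,{}_{2}F_{1}(-1/6,1/6;3/2;u).
\]
The inner function is evaluated by the companion identity $\frac{d}{du}[u^{c-1}{}_{2}F_{1}(a,b;c;u)]=(c-1)u^{c-2}{}_{2}F_{1}(a,b;c-1;u)$ with $c=3/2$, which, together with ${}_{2}F_{1}(-1/6,1/6;1/2;u)=\cos\beta$ (the case $a=1/6$ of ${}_{2}F_{1}(a,-a;1/2;\sin^{2}\phi)=\cos(2a\phi)$, 15.4.16 of \cite{olver}), reduces the computation to an integral. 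The substitution $t=\sin^{2}\psi$ and the product-to-sum identity $2\cos(\psi/3)\cos\psi=\cos(2\psi/3)+\cos(4\psi/3)$ give
\[
\sin 3\beta\cdot{}_{2}F_{1}(-1/6,1/6;3/2;u)=3\sin\beta\cos^{3}\beta.
\]
Differentiating $3\sin\beta\cos^{3}\beta/\sin 3\beta$ in $u$, and simplifying with $\cos 3\beta=\cos\beta(4\cos^{2}\beta-3)$ and $\sin 3\beta-3\sin\beta\cos^{2}\beta=-\sin^{3}\beta$, collapses the derivative to $-\cos\beta\sin^{3}\beta/(2\sin^{3}3\beta)$; multiplying by $-54$ is exactly the RHS of (\ref{bhyperg2}).

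The main obstacle is only bookkeeping: each step is an elementary trigonometric identity, but the chain for (\ref{bhyperg2}) is long, so matching the first few Taylor coefficients in $u$ is a prudent sanity check against any lost constant of integration.
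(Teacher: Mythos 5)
Your argument is correct, but it follows a genuinely different route from the paper's. The paper's proof invokes the characterization of ${}_{2}F_{1}(a,b;c\,|\,z)$ as the unique solution of the hypergeometric ODE that is analytic at $z=0$ with value $1$ there (legitimate here since for $c=-1/2$ and $c=5/2$ the second indicial exponent $1-c$ is not a nonnegative integer), and then verifies by direct differentiation that each right-hand side satisfies that equation. You instead use Euler's parameter-shifting derivative formula to connect each ${}_{2}F_{1}$ to one of the classical $\sin^{2}z$ evaluations: for (\ref{bhyperg1}) you match the $u$-derivatives of the two sides (both reduce to $-\tfrac{4}{9}\cos\beta/\cos 3\beta$) and use the common value $1$ at $u=0$; for (\ref{bhyperg2}) you first derive the intermediate closed form ${}_{2}F_{1}(-1/6,1/6;3/2;u)=3\sin\beta\cos^{3}\beta/\sin 3\beta$ by integration and then differentiate once more. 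I checked the chain: the constants ($-4/9$, $-54$, the final factor $27$) and the trigonometric reductions all come out right. Two cosmetic points: the displayed relation ${}_{2}F_{1}(5/6,7/6;5/2;u)=-54\,\tfrac{d}{du}{}_{2}F_{1}(-1/6,1/6;3/2;u)$ is \emph{one} application of Euler's rule (all three parameters shift simultaneously), not two; and you should double-check the equation numbers cited from \cite{olver} (the identities themselves, ${}_{2}F_{1}(a,1-a;1/2;\sin^{2}\phi)=\cos((2a-1)\phi)/\cos\phi$ and ${}_{2}F_{1}(a,-a;1/2;\sin^{2}\phi)=\cos(2a\phi)$, are correct). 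What the paper's approach buys is uniformity and brevity, one second-order ODE check per identity with no table look-ups; what yours buys is that it \emph{produces} the right-hand sides rather than merely verifying them, and yields the evaluation of ${}_{2}F_{1}(-1/6,1/6;3/2;u)$ as a by-product.
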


\begin{proof}
We know that ${}_{2}F_{1}\!\left(\left.a,b;\,c\right|z\right)$
is the unique function $f$ which is analytic at $z=0$,
with $f(0)=1$, and satisfies the \textit{hypergeometric equation}:
\[
z(1-z)f''(z)+\big[c-(a+b+1)z\big]f'(z)-abf(z)=0
\]
(see \cite{andrews}).
Now one can check that this equation is satisfied
by the right hand sides of these equations
(\ref{bhyperg1}) and (\ref{bhyperg2})
for given parameters~$a,b,c$.
\end{proof}

Now we are ready to express $\mathcal{B}_{3/2}$ as an elementary function.

\begin{proposition}
For $3|z|\sqrt{3}<2$ we have
\[
\mathcal{B}_{3/2}(z)
=\frac{3}{\left(\sqrt{3}\cos\beta-\sin\beta\right)^2}
\]
where $\beta=\frac{1}{3}\arcsin\left(3z\sqrt{3}/2\right)$.
\end{proposition}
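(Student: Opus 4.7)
The strategy is to verify that the candidate formula satisfies the Fuss functional equation $\mathcal{B}_{3/2}(z)=1+z\,\mathcal{B}_{3/2}(z)^{3/2}$ with $\mathcal{B}_{3/2}(0)=1$, which by the implicit function theorem singles out a unique analytic function in a neighborhood of the origin. Rather than plugging the closed forms of the preceding lemma into Lemma~\ref{bgenlemmab32} (which works but is trig-heavy), this route collapses in a few lines once one key factorization is spotted.

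Set $f(\beta):=\sqrt{3}\cos\beta-\sin\beta$, $g(\beta):=\sqrt{3}\cos\beta+\sin\beta$, and let $W(z):=3/f(\beta)^{2}$ be the candidate. For $3|z|\sqrt{3}<2$ we have $\sin 3\beta=3z\sqrt{3}/2\in(-1,1)$, hence $\beta\in(-\pi/6,\pi/6)$ and $f(\beta)=2\cos(\beta+\pi/6)>0$, so $W$ is analytic on that disc and the principal branch of $W^{3/2}$ equals $3\sqrt{3}/f(\beta)^{3}$. At $z=0$ one has $\beta=0$ and $f(0)=\sqrt{3}$, giving $W(0)=1$.

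The decisive observation is
\[
3\cos^{2}\beta-\sin^{2}\beta=\bigl(\sqrt{3}\cos\beta-\sin\beta\bigr)\bigl(\sqrt{3}\cos\beta+\sin\beta\bigr)=f(\beta)\,g(\beta),
\]
which combined with the triple-angle identity $\sin 3\beta=\sin\beta\,(3\cos^{2}\beta-\sin^{2}\beta)$ yields $\sin 3\beta=\sin\beta\cdot f\cdot g$. Separately, a direct expansion gives
\[
3-f(\beta)^{2}=2\sin^{2}\beta+2\sqrt{3}\sin\beta\cos\beta=2\sin\beta\cdot g(\beta).
\]
Therefore
\begin{align*}
W-1 &=\frac{3-f^{2}}{f^{2}}=\frac{2\sin\beta\cdot g}{f^{2}},\\
zW^{3/2} &=\frac{2\sin 3\beta}{3\sqrt{3}}\cdot\frac{3\sqrt{3}}{f^{3}}=\frac{2\sin\beta\cdot f\cdot g}{f^{3}}=\frac{2\sin\beta\cdot g}{f^{2}},
\end{align*}
so $W-1=zW^{3/2}$. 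By uniqueness of the analytic solution to the Fuss equation, $W=\mathcal{B}_{3/2}$.

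The only real obstacle is recognizing the factorization of $3\cos^{2}\beta-\sin^{2}\beta$ as $f\cdot g$; it is precisely this that lines the triple-angle formula up with $3-f^{2}$ and makes the two sides match without further trigonometric manipulation.
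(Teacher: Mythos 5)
Your proof is correct, but it takes a genuinely different route from the paper's. The paper obtains the closed form by combining Lemma~\ref{bgenlemmab32} (which writes $\mathcal{B}_{3/2}$ as a combination of two ${}_{2}F_{1}$'s) with the trigonometric evaluations (\ref{bhyperg1}) and (\ref{bhyperg2}) of those hypergeometric functions, and then simplifies the resulting sum via double- and triple-angle identities until the square $\left(\sqrt{3}\cos\beta-\sin\beta\right)^2$ emerges. You instead verify that the candidate $W(z)=3/f(\beta)^2$ solves the defining relation $W=1+zW^{3/2}$ with $W(0)=1$ and appeal to uniqueness of the analytic solution; your key identities check out: $3\cos^2\beta-\sin^2\beta=f g$, $3-f^2=2\sin\beta\, g$, and the triple-angle formula combine exactly as claimed, and the positivity $f(\beta)=2\cos(\beta+\pi/6)>0$ on the stated interval legitimizes the principal branch of $W^{3/2}$. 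Your approach buys brevity and independence from the two preceding lemmas (which become superfluous for this proposition), while the paper's approach keeps those hypergeometric representations, which it wants anyway for the companion formulas for $\mathcal{D}_{3/2,r}$, and avoids invoking uniqueness for the non-polynomial equation $w=1+zw^{3/2}$. The one point you should state explicitly is the final globalization: the implicit function theorem gives $W=\mathcal{B}_{3/2}$ only near $z=0$, and equality on all of $3|z|\sqrt{3}<2$ then follows from the identity theorem, since both sides are real-analytic there (the radius of convergence of $\mathcal{B}_{3/2}$ is exactly $1/c(3/2)=2/(3\sqrt{3})$). This is a one-sentence repair, not a gap in the method.
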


\begin{proof}
In view of the previous lemmas we have
\[
\mathcal{B}_{3/2}(z)=\frac{1-\frac{2}{3}\cos2\beta-\frac{1}{3}\cos4\beta}{\frac{4}{9}\sin^2 3\beta}
+\frac{2\sin 3\beta}{3\sqrt{3}}\frac{27\cos\beta\sin^3\beta}{\sin^3 3\beta}
\]
\[
=\frac{3(1-\cos2\beta)(2+\cos2\beta)}{2\sin^2 3\beta}
+\frac{6\sqrt{3}\cos\beta\sin^3\beta}{\sin^2 3\beta}
\]
\[
=\frac{3\sin^2 \beta\left(3\cos^2\beta+\sin^2\beta+2\sqrt{3}\cos\beta\sin\beta\right)}{\sin^2 3\beta}
\]
\[
=\frac{3\left(\sqrt{3}\cos\beta+\sin\beta\right)^2}{\left(3\cos^2\beta-\sin^2\beta\right)^2}
=\frac{3}{\left(\sqrt{3}\cos\beta-\sin\beta\right)^2}.
\]
\end{proof}

Now we provide formula for $\mathcal{D}_{3/2,r}$.

\begin{corollary}\label{bcortrzydrugie}
For all $r\in\mathbb{R}$ and $3|z|\sqrt{3}<2$ we have
\[
\mathcal{D}_{3/2,r}(z)=\left(\cfrac{3}{\left(\sqrt{3}\cos\beta-\sin\beta\right)^2}\right)^r
\frac{1}{\cos\beta\left(\cos\beta-\sqrt{3}\sin\beta\right)},
\]
where $\beta=\frac{1}{3}\arcsin\left(3z\sqrt{3}/2\right)$.
\end{corollary}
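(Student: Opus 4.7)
The plan is to combine the general identity \eqref{bgenbdgenfunct}, specialized to $p=3/2$, with the explicit evaluation of $\mathcal{B}_{3/2}(z)$ furnished by the previous proposition, and then reduce everything to the target form by elementary trigonometric manipulation.

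First I would rewrite \eqref{bgenbdgenfunct} for $p=3/2$ as
\[
\mathcal{D}_{3/2,r}(z)=\frac{2\,\mathcal{B}_{3/2}(z)^{1+r}}{3-\mathcal{B}_{3/2}(z)},
\]
and then substitute the closed form $\mathcal{B}_{3/2}(z)=3/(\sqrt{3}\cos\beta-\sin\beta)^{2}$ from the preceding proposition, valid in the same disc $3|z|\sqrt{3}<2$.

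The key simplification is the denominator $3-\mathcal{B}_{3/2}(z)$. Writing $D:=(\sqrt{3}\cos\beta-\sin\beta)^{2}=3\cos^{2}\beta-2\sqrt{3}\cos\beta\sin\beta+\sin^{2}\beta$, I would use $\sin^{2}\beta+\cos^{2}\beta=1$ to obtain
\[
D-1=2\cos^{2}\beta-2\sqrt{3}\cos\beta\sin\beta=2\cos\beta\bigl(\cos\beta-\sqrt{3}\sin\beta\bigr),
\]
so that
\[
3-\mathcal{B}_{3/2}(z)=\frac{3(D-1)}{D}=\frac{6\cos\beta\bigl(\cos\beta-\sqrt{3}\sin\beta\bigr)}{(\sqrt{3}\cos\beta-\sin\beta)^{2}}.
\]
Substituting this and the formula for $\mathcal{B}_{3/2}(z)^{1+r}$ into the expression for $\mathcal{D}_{3/2,r}(z)$, one factor of $(\sqrt{3}\cos\beta-\sin\beta)^{2}/3$ cancels against the $\mathcal{B}_{3/2}(z)^{1}$ part, leaving exactly
\[
\left(\frac{3}{(\sqrt{3}\cos\beta-\sin\beta)^{2}}\right)^{r}\frac{1}{\cos\beta\bigl(\cos\beta-\sqrt{3}\sin\beta\bigr)},
\]
which is the desired identity.

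Since every ingredient is already in hand, there is no genuine obstacle; the only point requiring a bit of care is the factorization of $D-1$, where one must resist the temptation to multiply out $(\sqrt{3}\cos\beta-\sin\beta)^{2}$ symmetrically and instead peel off the factor $\cos\beta-\sqrt{3}\sin\beta$ that conspires with the numerator $\mathcal{B}_{3/2}(z)^{1+r}$ to yield the advertised form. The convergence condition $3|z|\sqrt{3}<2$ is inherited from the previous proposition, and the identity extends to all real $r$ by interpreting the $r$-th power branch-analytically on the same disc.
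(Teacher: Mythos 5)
Your proposal is correct and is exactly the route the paper intends: the corollary is stated without explicit proof precisely because it follows from substituting the closed form of $\mathcal{B}_{3/2}(z)$ into \eqref{bgenbdgenfunct} with $p=3/2$, and your factorization $D-1=2\cos\beta\,(\cos\beta-\sqrt{3}\sin\beta)$ together with the cancellation $(3/D)^{1+r}\cdot D/3=(3/D)^{r}$ carries the computation through without gaps.
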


Note also a hypergeometric expression for $\mathcal{D}_{3/2,r}$:

\begin{proposition}
For all $r\in\mathbb{R}$ and $3|z|\sqrt{3}<2$ we have
\[
\mathcal{D}_{3/2,r}(z)
={}_{3}F_{2}\!\left(\left.\frac{1+r}{3},\frac{2+r}{3},\frac{3+r}{3};\,
\frac{1}{2},1+r\right|\frac{27z^2}{4}\right)
\]
\[
+\frac{z(2r+3)}{2}{}_{3}F_{2}\!\left(\left.\frac{5+2r}{6},\frac{7+2r}{6},\frac{9+2r}{6};\,
\frac{3}{2},\frac{3+2r}{2}\right|\frac{27z^2}{4}\right).
\]
\end{proposition}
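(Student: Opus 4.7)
The plan is to split $\mathcal{D}_{3/2,r}(z)=\sum_{n\ge 0}\binom{3n/2+r}{n}z^{n}$ according to the parity of $n$ and identify each of the two subseries as a ${}_{3}F_{2}$ in the variable $27z^{2}/4$, in the same spirit as the Pochhammer-symbol computation that gave the analogous formula for $\mathcal{D}_{3,r}$.

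For the even part, set $n=2k$. The initial coefficient is $\binom{r}{0}=1$, and a short manipulation yields
\[
\frac{\binom{3(k+1)+r}{2(k+1)}}{\binom{3k+r}{2k}}=\frac{(3k+r+1)(3k+r+2)(3k+r+3)}{(2k+1)(2k+2)(k+r+1)}.
\]
Writing $3k+r+j=3\left(k+(r+j)/3\right)$ for $j=1,2,3$, and $2k+1=2(k+1/2)$, $2k+2=2(k+1)$, $k+r+1=k+(1+r)$, extracts the numerical factor $27/4$ and puts the ratio into the standard ${}_{3}F_{2}$ form with top parameters $(1+r)/3,(2+r)/3,(3+r)/3$ and bottom parameters $1/2,\,1+r$; the $(k+1)$ required by the hypergeometric normalization is supplied by $2k+2$. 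Multiplying by $z^{2(k+1)}/z^{2k}=z^{2}$ identifies the even part with the first ${}_{3}F_{2}$ in the statement.

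For the odd part, set $n=2k+1$ and factor out the $k=0$ coefficient $\binom{3/2+r}{1}z=\frac{2r+3}{2}z$, so that the remaining series has initial term $1$. The ratio
\[
\frac{\binom{3(k+1)+3/2+r}{2k+3}}{\binom{3k+3/2+r}{2k+1}}=\frac{(3k+5/2+r)(3k+7/2+r)(3k+9/2+r)}{(2k+2)(2k+3)(k+3/2+r)}
\]
is handled in exactly the same way: the numerator factors rescale to $3\left(k+(5+2r)/6\right)$, $3\left(k+(7+2r)/6\right)$, $3\left(k+(9+2r)/6\right)$, while the denominator becomes $2(k+1)\cdot 2\left(k+3/2\right)\cdot \left(k+(3+2r)/2\right)$, producing once more the constant $27/4$ and the parameters listed in the second ${}_{3}F_{2}$. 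Summing the two pieces reproduces the displayed identity.

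The argument is pure bookkeeping and involves no analytic machinery. The one place where care is needed is the simultaneous rescaling of all six linear factors in each ratio: matching three factors of $3$ in the numerator against two factors of $2$ and one unshifted factor in the denominator is what delivers the constant $3^{3}/2^{2}=27/4$ and assigns the $(k+1)$ of the hypergeometric normalization to one of the denominator shifts. A miscount at this step is the only realistic source of error.
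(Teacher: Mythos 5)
Your proof is correct and is essentially the paper's own argument: both verify the identity coefficient‑by‑coefficient after splitting by the parity of $n$, the only cosmetic difference being that you recognize each subseries as a ${}_{3}F_{2}$ via the ratio of consecutive terms while the paper expands the Pochhammer symbols of the right‑hand side directly and simplifies to $\binom{3n/2+r}{n}$. The two ratios you compute, and the resulting parameter assignments (including the constant $27/4$ and the prefactor $(2r+3)z/2$ from the $k=0$ odd term), all check out.
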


\begin{proof}
If $n=2k$ then the coefficient at $z^n$ on the right hand side is
\[
\frac{\left(\frac{1+r}{3}\right)_{k}\left(\frac{2+r}{3}\right)_{k}\left(\frac{3+r}{3}\right)_{k}3^{3k}}
{\left(\frac{1}{2}\right)_{k}\left(1+r\right)_{k}2^{2k}k!}
=\frac{(1+r)(2+r)(3+r)\ldots(3k+r)}{(2k-1)!!(1+r)\ldots(k+r)2^k k!}
\]
\[
=\frac{(k+1+r)(k+2+r)\ldots(3k+r)}{(2k)!}=\binom{3k+r}{2k}.
\]
If, in turn, $n=2k+1$ then the coefficient at $z^n$ is
\[
\frac{3+2r}{2}\frac{\left(\frac{5+2r}{6}\right)_{k}\left(\frac{7+2r}{6}\right)_{k}\left(\frac{9+2r}{6}\right)_{k}3^{3k}}
{\left(\frac{3}{2}\right)_{k}\left(\frac{3+2r}{2}\right)_{k}2^{2k}k!}\]
\[
=\frac{3+2r}{2}\frac{(5+2r)(7+2r)(9+2r)(11+2r)\ldots(6k+3+2r)}
{(2k+1)!!(3+2r)(5+2r)\ldots(2k+1+2r)2^{3k}k!}
\]
\[
=\frac{(2k+3+2r)(2k+5+2r)\ldots(6k+3+2r)}{(2k+1)! 2^{2k+1}}\]
\[
=\frac{\left(\frac{3(2k+1)}{2}+r\right)\left(\frac{3(2k+1)}{2}-1+r\right)\left(\frac{3(2k+1)}{2}-2+r\right)
\ldots\left(\frac{3(2k+1)}{2}-2k+r\right)}{(2k+1)!}
\]
\[
=\binom{3(2k+1)/2+r}{2k+1},
\]
which proves the odd case.
\end{proof}

\section{Mellin convolution}

In this part we are going to prove that if $p\ge1$ and $-1\le r\le p-1$
then the sequence (\ref{aintbinomial}) is positive definite.
Moreover, if $p>1$ is rational and $-1<r\le p-1$
then the corresponding probability measure $\nu(p,r)$
is absolutely continuous and is the Mellin product
of modified beta distributions, see~\cite{balnev}.

\begin{lemma}\label{cmellemma1}
If $p=k/l$, where $k,l$ are integers, $1\le l<k$, $r>-1$
and if $mp+r+1\ne0,-1,-2,\ldots$ then
\begin{equation}\label{cmelbinom}
\binom{mp+r}{m}
=\frac{1}{\sqrt{2\pi l}}\left(\frac{p}{p-1}\right)^{r+1/2}
\frac{\prod_{j=1}^{k}\Gamma(\beta_j+m/l)}{\prod_{j=1}^{k}\Gamma(\alpha_j+m/l)}
c(p)^m,
\end{equation}
$m=0,1,2,\ldots$, where $c(p)=p^p(p-1)^{1-p}$,
\begin{align}
\alpha_j&=\left\{\begin{array}{ll}
\cfrac{j}{l}&\mbox{if $1\le j\le l$,}\label{cmelalpha}\\
\cfrac{r+j-l}{k-l}&\mbox{if $l+1\le j\le k$,}
\end{array}\right.\\
\beta_j&=\frac{r+j}{k},\quad\qquad{1\le j\le k}.\label{cmelbetha}
\end{align}
\end{lemma}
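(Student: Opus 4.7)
The plan is to apply the Gauss--Legendre multiplication formula
\[
\Gamma(nz)=(2\pi)^{(1-n)/2}\,n^{nz-1/2}\prod_{j=0}^{n-1}\Gamma\!\left(z+\tfrac{j}{n}\right)
\]
to each of the three Gamma functions appearing in the expansion
\[
\binom{mp+r}{m}=\frac{\Gamma(mp+r+1)}{\Gamma(m+1)\,\Gamma(m(p-1)+r+1)}.
\]
The hypothesis $mp+r+1\notin\{0,-1,-2,\ldots\}$ guarantees that the numerator is defined, while $r>-1$ and $p>1$ make the two denominator Gammas positive, so the identity is legitimate term by term.

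First I would apply the multiplication formula with $n=k$ to $\Gamma(mp+r+1)=\Gamma(k z_1)$, where $z_1=m/l+(r+1)/k$. The resulting product $\prod_{j=0}^{k-1}\Gamma(z_1+j/k)$ reindexes as $\prod_{j=1}^{k}\Gamma(\beta_j+m/l)$, exactly matching (\ref{cmelbetha}). Next, with $n=l$ applied to $\Gamma(m+1)=\Gamma(l z_2)$ ($z_2=(m+1)/l$), the product $\prod_{j=0}^{l-1}\Gamma(z_2+j/l)$ becomes $\prod_{j=1}^{l}\Gamma(j/l+m/l)$, giving the first half of (\ref{cmelalpha}). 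Finally, with $n=k-l$ applied to $\Gamma(m(p-1)+r+1)=\Gamma((k-l)z_3)$ ($z_3=m/l+(r+1)/(k-l)$), the product reindexes via $j\mapsto j-l$ to $\prod_{j=l+1}^{k}\Gamma((r+j-l)/(k-l)+m/l)$, giving the second half of (\ref{cmelalpha}). Thus the Gamma-function ratio falls out on the nose.

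The remainder is bookkeeping of the prefactors. The three powers of $2\pi$ combine to
\[
\tfrac{1-k}{2}-\tfrac{1-l}{2}-\tfrac{1-(k-l)}{2}=-\tfrac12,
\]
producing the overall $(2\pi)^{-1/2}$. For the powers of $k,l,k-l$, substituting $k=pl$ and $k-l=(p-1)l$ one separates the $m$-dependent piece
\[
\frac{k^{mk/l}}{l^{m}\,(k-l)^{m(k-l)/l}}=\left(\frac{(pl)^{p}}{l\cdot((p-1)l)^{p-1}}\right)^{m}=\left(\frac{p^{p}}{(p-1)^{p-1}}\right)^{m}=c(p)^{m}
\]
from the $m$-independent piece
\[
\frac{k^{r+1/2}}{l^{1/2}\,(k-l)^{r+1/2}}=\frac{1}{\sqrt{l}}\left(\frac{p}{p-1}\right)^{r+1/2}.
\]
Collecting everything yields (\ref{cmelbinom}).

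There is no conceptual obstacle here; the only risk is an arithmetic slip in tracking the three exponents $kz_1-1/2$, $lz_2-1/2$, $(k-l)z_3-1/2$ and verifying that they collapse to the claimed product $c(p)^{m}\cdot l^{-1/2}(p/(p-1))^{r+1/2}$. Once those three multiplication-formula applications are written down cleanly, the identity is essentially automatic.
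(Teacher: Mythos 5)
Your proposal is correct and follows essentially the same route as the paper: rewrite $\binom{mp+r}{m}$ as a ratio of three Gamma functions and apply the Gauss multiplication formula with orders $k$, $l$ and $k-l$ respectively, then collect the powers of $2\pi$, $k$, $l$, $k-l$ into the constant prefactor and $c(p)^m$. Your bookkeeping of the exponents checks out, so nothing is missing.
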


Writing $p=k/l$ we will tacitly assume that $k,l$ are relatively prime,
although this assumption is not necessary in the sequel.

\begin{proof}
Assuming that  $mp+r+1\ne0,-1,-2,\ldots$, we have
\begin{equation}\label{cmelbinomgamma}
\binom{mp+r}{m}=\frac{\Gamma(mp+r+1)}{\Gamma(m+1)\Gamma(mp-m+r+1)}.
\end{equation}
Now we apply the \textit{Gauss's multiplication formula}:
\begin{equation}
\Gamma(nz)=(2\pi)^{(1-n)/2}n^{nz-1/2}
\prod_{i=0}^{n-1}\Gamma\left(z+\frac{i}{n}\right)
\end{equation}
which gives us:
\begin{align*}
\Gamma(m p+r+1)&=\Gamma\left(k\left(\frac{m}{l}+\frac{r+1}{k}\right)\right)
=(2\pi)^{(1-k)/2}k^{m k/l+r+1/2}\prod_{j=1}^{k}
\Gamma\left(\frac{m}{l}+\frac{r+j}{k}\right),\\
\Gamma(m+1)&=\Gamma\left(l\frac{m+1}{l}\right)=(2\pi)^{(1-l)/2}l^{m+1/2}\prod_{j=1}^{l}
\Gamma\left(\frac{m}{l}+\frac{j}{l}\right)
\end{align*}
and
\begin{align*}
\Gamma(m p-m+r+1)&=\Gamma\left((k-l)\left(\frac{m}{l}+\frac{r+1}{k-l}\right)\right)\\
&=(2\pi)^{(1-k+l)/2}(k-l)^{m(k-l)/l+r+1/2}\prod_{j=l+1}^{k}
\Gamma\left(\frac{m}{l}+\frac{r+j-l}{k-l}\right).
\end{align*}
Applying to (\ref{cmelbinomgamma}) we get (\ref{cmelbinom}).
\end{proof}

Similarly as in \cite{mpz2012} we need to change the enumeration of $\alpha$'s.
Note that here this modification depends not only on $k,l$ but also on $r$.

\begin{lemma}\label{cmellemma2}
Suppose that $k,l$ are integers such that $1\le l<k$
and that $-1< r\le p-1=(k-l)/l$. For $1\le i\le l$ define
\[
j_i':=\left\lfloor\frac{ik}{l}-r\right\rfloor,
\]
where $\lfloor\cdot\rfloor$ denotes the floor function.
In addition we put $j_0':=0$ and $j_{l+1}':=k+1$, so that
\[0=j_0'<j_1'<j_2'<\ldots<j_l'\le k<k+1=j_{l+1}'.\]
For $1\le j\le k$ define
\begin{equation}
\widetilde{\alpha}_j=\left\{\begin{array}{ll}
\cfrac{i}{l}&\mbox{ if $j=j_i'$, $1\le i\le l$,}\\
\cfrac{r+j-i}{k-l}&\mbox{ if $j_i'<j<j_{i+1}'$.}
\end{array}\right.
\end{equation}
Then the sequence $\left\{\widetilde{\alpha}_j\right\}_{j=1}^{k}$
is a permutation of $\left\{{\alpha}_j\right\}_{j=1}^{k}$
and we have $\beta_j\le\widetilde{\alpha}_j$
for all $j\le k$.
\end{lemma}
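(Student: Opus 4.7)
The plan is to verify in sequence: (a) the claimed ordering $0 = j_0' < j_1' < \cdots < j_l' \le k < k+1 = j_{l+1}'$; (b) that $\{\widetilde{\alpha}_j\}_{j=1}^k$ is a permutation of $\{\alpha_j\}_{j=1}^k$; (c) the inequality $\beta_j \le \widetilde{\alpha}_j$ for every $j$. All three will follow from elementary properties of the floor function, with the indexing bookkeeping in (b) being the only mildly subtle point.

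For (a), the upper bound $j_l' \le k$ uses $r > -1$: then $k - r < k+1$, so $j_l' = \lfloor k - r\rfloor \le k$. The positivity $j_1' \ge 1$ comes from the hypothesis $r \le p - 1 = (k-l)/l$, which gives $k/l - r \ge 1$ and hence $j_1' \ge 1$. Strict monotonicity $j_{i+1}' > j_i'$ follows from the identity $\lfloor x + c\rfloor \ge \lfloor x\rfloor + \lfloor c\rfloor$ applied with $c = k/l > 1$, whose floor is at least $1$.

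For (b), the $l$ positions $j = j_i'$ contribute the values $1/l, 2/l, \ldots, 1$, which are precisely $\alpha_1, \ldots, \alpha_l$. For the remaining $k-l$ positions, I will show that $s := j - i$ ranges bijectively over $\{1, 2, \ldots, k-l\}$ as $(i,j)$ runs through the admissible pairs, so that the filling values $(r+s)/(k-l)$ are exactly $\alpha_{l+1}, \ldots, \alpha_k$. Indeed, for fixed $i$ with $0 \le i \le l$, the range $j_i' < j < j_{i+1}'$ gives $s \in \{j_i' - i + 1, \ldots, j_{i+1}' - i - 1\}$, and as $i$ is replaced by $i+1$ the new left endpoint $j_{i+1}' - (i+1) + 1 = j_{i+1}' - i$ continues the previous right endpoint without gap or overlap. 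Using $j_0' = 0$ and $j_{l+1}' = k+1$, the total range of $s$ is the contiguous block $\{1, 2, \ldots, k-l\}$, as required.

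For (c), treat the two cases separately. If $j = j_i'$, then $\widetilde{\alpha}_j = i/l$, and $\beta_j \le \widetilde{\alpha}_j$ is equivalent to $l(r + j_i') \le k i$, i.e.\ $j_i' \le ik/l - r$, which is immediate from the definition $j_i' = \lfloor ik/l - r\rfloor$. If $j_i' < j < j_{i+1}'$, then $\widetilde{\alpha}_j = (r + j - i)/(k-l)$, and since $k - l > 0$ and $r + j > 0$, cross-multiplication shows $(r+j)/k \le (r+j-i)/(k-l)$ is equivalent to $l(r+j) \ge ki$, i.e.\ $j \ge ik/l - r$. This follows from $j \ge j_i' + 1 > ik/l - r$, using $\lfloor x\rfloor + 1 > x$. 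The main obstacle is purely notational: keeping the two-parameter enumeration of filling indices consistent with the bounds $j_i', j_{i+1}'$; once that bookkeeping is clear, every inequality is one line.
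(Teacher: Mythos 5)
Your proof is correct and part (c) — the inequality $\beta_j\le\widetilde{\alpha}_j$ via $\lfloor x\rfloor\le x$ in the case $j=j_i'$ and $\lfloor x\rfloor+1>x$ in the filler case — is essentially identical to the paper's own argument. The paper's proof addresses only that inequality, taking the ordering of the $j_i'$ and the permutation claim for granted, so your verifications of (a) and (b) are a welcome (and correct) supplement rather than a different route.
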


\begin{proof}
If $j=j_i'$, $1\le i\le l$, then we have to prove that
\[
\frac{r+j_i'}{k}\le \frac{i}{l},
\]
which is equivalent to
\[
j_i'\le\frac{ik}{l}-r,
\]
but this is a consequence of the definition of $j_i'$ and the inequality $\lfloor x\rfloor\le x$.

Now assume that $j_i'<j<j_{i+1}'$, $0\le i\le k$. Then we should prove that
\[
\frac{r+j}{k}\le \frac{r+j-i}{k-l},
\]
which is equivalent to
\[
lr+lj-ik\ge0.
\]
Since $\lfloor x\rfloor+1>x$, we have
\[
lr+lj-ik\ge lr+l(j_i'+1)-ik>lr+l\left(\frac{ik}{l}-r\right)-ik=0,
\]
which concludes the proof.
\end{proof}

Recall that for probability measures $\mu_1$, $\mu_2$
on the positive half-line $[0,\infty)$ the \textit{Mellin convolution} (or the \textit{Mellin product}) is defined by
\begin{equation}
\left(\mu_1\circ\mu_2\right)(A):=\int_{0}^{\infty}\int_{0}^{\infty}\mathbf{1}_{A}(xy)d\mu_1(x)d\mu_{2}(y)
\end{equation}
for every Borel set $A\subseteq[0,\infty)$.
This is the distribution of the product $X_1\cdot X_2$ of two independent nonnegative
random variables with $X_i\sim\mu_i$.
In particular,
$\mu\circ\delta_c$ is the \textit{dilation} of $\mu$:
\[
\left(\mu\circ\delta_c\right)(A)=\mathbf{D}_c\mu(A):=\mu\left(\frac{1}{c}A\right)
\]
($c>0$). If $\mu$ has density $f(x)$
then $\mathbf{D}_c(\mu)$ has density $f(x/c)/c$.

If both the measures $\mu_1,\mu_2$ have all \textit{moments}
\[
s_m(\mu_i):=\int_{0}^{\infty}x^m\,d\mu_i(x)
\]
finite then so has $\mu_1\circ\mu_2$ and
\[
s_m\left(\mu_1\circ\mu_2\right)=s_m(\mu_1)\cdot s_m(\mu_2)
\]
for all $m$.

If $\mu_1,\mu_2$ are absolutely continuous, with densities $f_1,f_2$ respectively, then
so is $\mu_1\circ\mu_2$ and its density is given by the Mellin convolution:
\[
\left(f_1\circ f_2\right)(x):=\int_{0}^{\infty}f_{1}(x/y)f_{2}(y)\frac{dy}{y}.
\]

Similarly as in \cite{mpz2012} we will use
the \textit{modified beta distributions} (see~\cite{balnev}):
\begin{equation}\label{modifiedbeta}
\mathbf{b}(u+v,u,l):=
\frac{l}{\mathrm{B}(u,v)}x^{lu-1}\left(1-x^l\right)^{v-1}dx,
\qquad x\in[0,1],
\end{equation}
where $u,v,l>0$ and $\mathrm{B}$ denotes the Euler beta function.
The $n$th moment of $\mathbf{b}(u+v,u,l)$ is
\[
\int x^n\,d\mathbf{b}(u+v,u,l)(x)=\frac{\Gamma(u+n/l)\Gamma(u+v)}{\Gamma(u+v+n/l)\Gamma(u)}.
\]
We also define $\mathbf{b}(u,u,l):=\delta_{1}$
for $u,l>0$.

Now we are ready to prove

\begin{theorem}\label{cmelthmellin}
Suppose that $p=k/l$, where $k,l$ are integers such that
$1\le l<k$, and that $r$ is a real number, $-1<r\le p-1$.
Then there exists a unique probability measure $\nu(p,r)$
such that $\binom{mp+r}{m}$ is its moment sequence.
Moreover, $\nu(p,r)$ can be represented as the following Mellin
convolution:
\[
\nu(p,r)=\mathbf{b}(\widetilde{\alpha}_1,\beta_1,l)\circ\ldots\circ
\mathbf{b}(\widetilde{\alpha}_k,\beta_k,l)\circ\delta_{c(p)},
\]
where $c(p):=p^p(p-1)^{1-p}$.
In particular, $\nu(p,r)$ is absolutely continuous
and its support is $[0,c(p)]$.
\end{theorem}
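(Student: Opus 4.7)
The plan is to verify that the Mellin convolution on the right-hand side has moment sequence $\binom{mp+r}{m}$ for all $m\ge 0$, after which uniqueness of $\nu(p,r)$ and its support will follow from the determinacy of the moment problem on a bounded interval.

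First I would check that the candidate factors $\mathbf{b}(\widetilde{\alpha}_j,\beta_j,l)$ are admissible probability measures. Under the hypothesis $-1<r\le p-1$ one verifies directly from (\ref{cmelalpha})--(\ref{cmelbetha}) that $\beta_j>0$ and $\widetilde{\alpha}_j>0$, while Lemma~\ref{cmellemma2} yields $\widetilde{\alpha}_j\ge \beta_j$ for every $j$; indices where equality holds contribute a $\delta_{1}$ factor and drop out of the convolution.

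Next, using the moment formula for $\mathbf{b}(u+v,u,l)$ stated just before the theorem, together with the multiplicativity of moments under Mellin convolution and the dilation rule $s_m(\mu\circ\delta_{c})=c^m s_m(\mu)$, the $m$-th moment of the candidate measure equals
\[
c(p)^{m}\prod_{j=1}^{k}\frac{\Gamma(\beta_j+m/l)\,\Gamma(\widetilde{\alpha}_j)}{\Gamma(\widetilde{\alpha}_j+m/l)\,\Gamma(\beta_j)}.
\]
By Lemma~\ref{cmellemma2} the sequence $\{\widetilde{\alpha}_j\}$ is a permutation of $\{\alpha_j\}$, so $\prod_j\Gamma(\widetilde{\alpha}_j)/\Gamma(\widetilde{\alpha}_j+m/l)=\prod_j\Gamma(\alpha_j)/\Gamma(\alpha_j+m/l)$. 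Substituting into Lemma~\ref{cmellemma1} and noting that the prefactor $\frac{1}{\sqrt{2\pi l}}\bigl(\tfrac{p}{p-1}\bigr)^{r+1/2}$ is forced to equal $\prod_j\Gamma(\alpha_j)/\prod_j\Gamma(\beta_j)$ by evaluating Lemma~\ref{cmellemma1} at $m=0$ (where $\binom{r}{0}=1$), the displayed moment coincides with $\binom{mp+r}{m}$ for every $m\ge 0$.

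Uniqueness of $\nu(p,r)$ is immediate: each $\mathbf{b}(\widetilde{\alpha}_j,\beta_j,l)$ is supported in $[0,1]$, so the convolution followed by the dilation by $c(p)$ is supported in $[0,c(p)]$, and the Hausdorff moment problem on a compact interval is determinate. For absolute continuity and the claim that the support is exactly $[0,c(p)]$, I would note that a direct computation gives $\sum_j(\widetilde{\alpha}_j-\beta_j)=\sum_j\alpha_j-\sum_j\beta_j=\tfrac12>0$, so at least one factor has $\widetilde{\alpha}_j>\beta_j$ strictly and therefore an absolutely continuous density positive throughout $(0,1)$; Mellin-convolving with the remaining factors preserves both absolute continuity and full support on $(0,1)$, and the dilation by $c(p)$ then gives a density supported on all of $(0,c(p))$. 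The main obstacle is the bookkeeping: verifying that the permutation $j\mapsto \widetilde{\alpha}_j$ provided by Lemma~\ref{cmellemma2} correctly pairs each $\beta_j$ with an $\alpha_{\sigma(j)}$ dominating it, so that the gamma-function ratios assemble into genuine beta densities rather than signed measures. Everything else is essentially algebraic manipulation using the stated lemmas.
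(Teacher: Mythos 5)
Your proposal is correct and follows essentially the same route as the paper: the paper's proof simply combines Lemma~\ref{cmellemma1} and Lemma~\ref{cmellemma2} to match $\binom{mp+r}{m}$ with $c(p)^m\prod_j\Gamma(\beta_j+m/l)\Gamma(\widetilde{\alpha}_j)/\bigl(\Gamma(\widetilde{\alpha}_j+m/l)\Gamma(\beta_j)\bigr)$ and evaluates at $m=0$ to fix the constant, exactly as you do. Your additional checks (admissibility of the beta factors via $\widetilde{\alpha}_j\ge\beta_j>0$, determinacy of the Hausdorff moment problem, and $\sum_j(\alpha_j-\beta_j)=1/2$ forcing a strictly nontrivial factor) are correct elaborations of details the paper leaves implicit.
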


The density  function of $\nu(p,r)$ will be denoted by $V_{p,r}(x)$.

\begin{proof}
In view of Lemma~\ref{cmellemma1} and Lemma~\ref{cmellemma2}
we can write
\[
\binom{mp+r}{m}=D
\prod_{j=1}^{k}\frac{\Gamma(\beta_j+m/l)\Gamma(\widetilde{\alpha}_j)}
{\Gamma(\widetilde{\alpha}_j+m/l)\Gamma(\beta_j)}\cdot c(p)^m
\]
for some constant $D$. Taking $m=0$ we see that $D=1$.
\end{proof}

\textbf{Example.} Assume that $p=2$. If $-1<r\le0$ then
\begin{equation}
\nu(2,r)=\mathbf{b}\left(r+1,\frac{r+1}{2},1\right)\circ
\mathbf{b}\left(1,\frac{r+2}{2},1\right)\circ\delta_{4},
\end{equation}
and if $0\le r\le1$ then
\begin{equation}
\nu(2,r)=\mathbf{b}\left(1,\frac{r+1}{2},1\right)\circ
\mathbf{b}\left(r+1,\frac{r+2}{2},1\right)\circ\delta_{4}.
\end{equation}

\begin{theorem}\label{cthepositive}
Suppose that $p,r$ are real numbers, $p\ge1$ and $-1\le r\le p-1$.
Then there exists a unique probability measure $\nu(p,r)$,
with support contained in $[0,c(p)]$, such that
$\left\{\binom{mp+r}{m}\right\}_{m=0}^{\infty}$ is its moment sequence.
\end{theorem}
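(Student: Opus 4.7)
The plan is to extend Theorem~\ref{cmelthmellin} from rational $p>1$, $-1<r\le p-1$ to the full region $p\ge 1$, $-1\le r\le p-1$ by a weak-limit approximation argument. Fix $(p,r)$ in this region. I would choose rationals $p_n>1$ with $p_n\to p$ and real numbers $r_n\in(-1,p_n-1]$ with $r_n\to r$, so that each pair $(p_n,r_n)$ lies in the region already covered by Theorem~\ref{cmelthmellin}. The construction is immediate in the interior; at the boundary of the admissible region one approaches from inside the strict region, for instance by taking $p_n=p+1/n$ when $p=1$, $r_n=\max(r,-1+1/n)$ when $r=-1$, and $r_n=p_n-1$ when $r=p-1$, so that the required strict inequalities hold for every $n$.

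By Theorem~\ref{cmelthmellin}, each $\nu(p_n,r_n)$ is a probability measure supported in $[0,c(p_n)]$ whose $m$-th moment equals $\binom{mp_n+r_n}{m}$. The function $c(p)=p^p(p-1)^{1-p}$ extends continuously to $p=1$ with value $1$, so $c(p_n)\to c(p)$ and all supports lie inside a common compact interval $[0,C]$. The family is therefore tight, and Prokhorov's theorem yields a subsequence $\nu(p_{n_j},r_{n_j})$ converging weakly to some probability measure $\nu$. Since $(p,r)\mapsto\binom{mp+r}{m}$ is a polynomial (hence continuous) for each fixed $m$, and the supports are uniformly bounded, weak convergence upgrades to convergence of polynomial moments (after multiplying $x^m$ by a continuous cutoff equal to $1$ on $[0,C]$), giving
\[
\int x^m\, d\nu(x)=\lim_{j\to\infty}\binom{m p_{n_j}+r_{n_j}}{m}=\binom{mp+r}{m}.
\]

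For the support, fix $\epsilon>0$; then $c(p_{n_j})<c(p)+\epsilon$ for $j$ large, so the $\nu(p_{n_j},r_{n_j})$ are supported in the closed set $[0,c(p)+\epsilon]$, and the portmanteau theorem gives $\nu([0,c(p)+\epsilon])\ge\limsup_j\nu(p_{n_j},r_{n_j})([0,c(p)+\epsilon])=1$. Letting $\epsilon\downarrow 0$ yields $\mathrm{supp}\,\nu\subseteq[0,c(p)]$. Uniqueness is automatic because a probability measure with compact support in $\mathbb{R}$ is determined by its moments via Weierstrass approximation; this also shows that the full sequence $\nu(p_n,r_n)$ (not merely a subsequence) converges weakly to $\nu(p,r)$, so the limit is independent of the approximating sequence chosen. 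I expect the only nontrivial point to be the bookkeeping of step one, namely ensuring that the strict constraints $p_n>1$, $r_n>-1$ and $r_n\le p_n-1$ can be preserved simultaneously at every corner of the parameter region, which calls for a short case analysis at $(1,-1)$, $(1,0)$ and along the two edges $r=-1$ and $r=p-1$.
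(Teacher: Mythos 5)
Your proof is correct and follows essentially the same route as the paper: the paper's (one-line) proof also obtains the general case by passing to the limit from the rational case of Theorem~\ref{cmelthmellin}, invoking closure of positive definite sequences under pointwise limits, whereas you carry out the same limiting argument at the level of weak convergence of the measures $\nu(p_n,r_n)$ via tightness and Prokhorov. Your version has the minor advantage of handling the support and uniqueness claims explicitly, which the paper leaves implicit.
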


\begin{proof}
It follows from the fact that the class of positive definite sequence
is closed under pointwise limits.
\end{proof}

Recall that if $\left\{s_n\right\}_{n=0}^{\infty}$ is positive definite,
i.e. is the moment sequence of a probability measure $\mu$ on $\mathbb{R}$,
then $\left\{(-1)^n s_n\right\}_{n=0}^{\infty}$ is the moment sequence
of the reflection $\widehat{\mu}$ of $\mu$: $\widehat{\mu}(X):=\mu(-X)$.
For the binomial sequence we have:
\begin{equation}\label{cbinomreflection}
\binom{np+r}{n}(-1)^n=\binom{n(1-p)-1-r}{n}
\end{equation}
(which in particular implies (\ref{bdreflection})),
hence if the binomial sequence (\ref{aintbinomial}), with parameters $(p,r)$, is positive definite
then it is also positive definite for parameters $(1-p,-1-r)$ and we have
\begin{equation}\label{cnureflection}
\nu(1-p,-1-r)=\widehat{\nu(p,r)}.
\end{equation}
Therefore, if either $p\ge1$, $-1\le r\le p-1$ or $p\le0$, $p-1\le r\le0$
then the binomial sequence (\ref{aintbinomial}) is positive definite
(for illustration see Figure~\ref{figura1}).
We will see in Theorem~\ref{fthcharacterisation} that the opposite implication is also true.

Let us also note relations between the measures $\nu(p,-1)$, $\nu(p,0)$, $\nu(p,p-1)$
and observe that $\nu(p,-1)$ has an atomic part.

\begin{proposition} For $p\ge1$ we have
\begin{equation}\label{cmelnuminusjeden}
\nu(p,-1)=\frac{1}{p}\delta_0+\frac{p-1}{p}\nu(p,0)
\end{equation}
and
\begin{equation}\label{cmelnupeminusjeden}
d\nu(p,p-1)(x)=\frac{x}{p}d\nu(p,0)(x).
\end{equation}
\end{proposition}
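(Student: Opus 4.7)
The plan is to verify both identities at the level of moment sequences and then invoke uniqueness of the associated measures. By Theorem~\ref{cthepositive}, each measure $\nu(p,r)$ is supported in the compact interval $[0,c(p)]$, so it is uniquely determined by its moments (the Hausdorff moment problem is determinate for compactly supported measures). The key computational input is the elementary identity (\ref{bgenbinomialrelation}), which relates the three binomial sequences under consideration.

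For the first identity, consider the measure $\sigma := \tfrac{1}{p}\delta_0 + \tfrac{p-1}{p}\nu(p,0)$ on the right-hand side. I would first check that $\sigma$ is a probability measure (its total mass is clearly $1$), and then compute its $n$th moment. For $n=0$ both sides give $1 = \binom{-1}{0}$. For $n\geq 1$, the atom at $0$ contributes nothing, so the $n$th moment of $\sigma$ equals $\tfrac{p-1}{p}\binom{np}{n}$. By (\ref{bgenbinomialrelation}) with $n$ replaced by $n-1$, namely $\tfrac{1}{p-1}\binom{np-1}{n} = \tfrac{1}{p}\binom{np}{n}$, this is precisely $\binom{np-1}{n}$. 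Hence $\sigma$ and $\nu(p,-1)$ have the same moments, and by uniqueness $\sigma = \nu(p,-1)$.

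For the second identity, I would similarly verify that $\tau := \tfrac{x}{p}\,d\nu(p,0)(x)$ is a probability measure: its total mass equals $\tfrac{1}{p}\int x\,d\nu(p,0)(x) = \tfrac{1}{p}\binom{p}{1} = 1$, and it is obviously nonnegative since $\nu(p,0)$ is supported in $[0,c(p)]\subseteq[0,\infty)$. Its $n$th moment is
\[
\int x^n\,d\tau(x) = \frac{1}{p}\int x^{n+1}\,d\nu(p,0)(x) = \frac{1}{p}\binom{(n+1)p}{n+1},
\]
which by (\ref{bgenbinomialrelation}) equals $\binom{np+p-1}{n}$, the $n$th moment of $\nu(p,p-1)$. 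Uniqueness finishes the argument.

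There is no real obstacle here: once the elementary identity (\ref{bgenbinomialrelation}) is at hand, the proof is a two-line moment check plus an appeal to determinacy of the moment problem. The only point requiring minor care is the $n=0$ case in the first identity (where the factor $\tfrac{p-1}{p}$ alone would not give the right answer, so the atomic part at $0$ is essential), and the positivity and normalization of the candidate measure $\tfrac{x}{p}\,d\nu(p,0)$ in the second identity.
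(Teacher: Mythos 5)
Your proposal is correct and follows essentially the same route as the paper: both reduce the identities to the elementary relation (\ref{bgenbinomialrelation}) between the binomial sequences and then compare moments, using determinacy of the moment problem for compactly supported measures. Your additional checks (total mass, nonnegativity, the $n=0$ case) are fine but not needed beyond what the paper's two-line moment computation already gives.
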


\begin{proof}
Both formulas follow directly from (\ref{bgenbinomialrelation}):
for $n\ge1$ we have
\[
\binom{np-1}{n}=\frac{p-1}{p}\binom{np}{n}=\frac{p-1}{p}\int_{0}^{c(p)} x^n\,d\mu(p,0)(x)
\]
and for $n\ge0$
\[
\binom{np+p-1}{n}=\frac{1}{p}\binom{(n+1)p}{n+1}=\frac{1}{p}\int_{0}^{c(p)} x^n x\,d\mu(p,0)(x).
\]
\end{proof}

\section{Applying Meijer $G$-function}

We know already that if $p>1$ is a rational number
and $-1<r\le p-1$ then $\nu(p,r)$ is absolutely continuous.
The aim of this section is to describe the density function
$V_{p,r}$ of $\nu(p,r)$ in terms of the Meijer $G$-function (see \cite{olver} for example)
and consequently, as a linear combination of generalized hypergeometric
functions. We will see that in some particular cases $V_{p,r}$
can be represented as an elementary function.

\begin{lemma}
For $p>1$ and $r\in\mathbb{R}$ define complex function
\begin{equation}
\psi_{p,r}(\sigma)
=\frac{\Gamma\big((\sigma-1)p+r+1\big)}{\Gamma(\sigma)\Gamma\big((\sigma-1)(p-1)+r+1\big)},
\end{equation}
where for critical $\sigma$ the right hand side is understood as the limit if exists.
Then, putting $-\mathbb{N}_0:=\{0,-1,-2,\ldots\}$, we have
\begin{equation}\label{dmeijerpsibinomial}
\psi_{p,r}(n+1)=\left\{
\begin{array}{ll}
\binom{np+r}{n}&\mbox{if }np+r+1\notin-\mathbb{N}_0,\\
{}\\
\frac{p-1}{p}\binom{np+r}{n}&\mbox{if }np+r+1\in-\mathbb{N}_0.
\end{array}
\right.
\end{equation}
\end{lemma}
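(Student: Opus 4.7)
The plan is to compute $\psi_{p,r}(n+1)$ by direct substitution, splitting the analysis into the generic case (where both Gammas in the ratio are finite) and the singular case (where both blow up simultaneously).

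First I would substitute $\sigma = n+1$ into the defining formula to obtain
\[
\psi_{p,r}(n+1) = \frac{\Gamma(np+r+1)}{n!\,\Gamma(np-n+r+1)}.
\]
By the Pochhammer identity $\Gamma(z+n)/\Gamma(z) = (z)_n$ applied with $z = np-n+r+1$, this formally equals $\tfrac{1}{n!}(np-n+r+1)(np-n+r+2)\cdots(np+r) = \binom{np+r}{n}$. Since this is a polynomial identity in $(p,r)$, it is valid whenever both Gammas are finite. If only the denominator has a pole (i.e., $np+r+1 \notin -\mathbb{N}_0$ but $np-n+r+1 \in -\mathbb{N}_0$), then $np+r$ is a nonnegative integer strictly less than $n$, so $\binom{np+r}{n} = 0$ and the $\Gamma$-quotient is also $0$; hence the equality $\psi_{p,r}(n+1) = \binom{np+r}{n}$ holds throughout the range $np+r+1\notin -\mathbb{N}_0$.

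In the singular case $np+r+1 = -k$ with $k\in\mathbb{N}_0$, we also have $np-n+r+1 = -k-n \in -\mathbb{N}_0$, so both Gammas have simple poles at $\sigma = n+1$ and the value must be extracted as a limit. I would set $\sigma = n+1+\epsilon$ and apply the local expansion $\Gamma(-m+\eta) \sim \tfrac{(-1)^m}{m!\,\eta}$ as $\eta\to 0$. Under the perturbation, the numerator argument becomes $-k+\epsilon p$ and the denominator argument becomes $-k-n+\epsilon(p-1)$, so
\[
\Gamma\bigl((\sigma-1)p+r+1\bigr)\sim \frac{(-1)^k}{k!\,\epsilon p},\qquad \Gamma\bigl((\sigma-1)(p-1)+r+1\bigr)\sim\frac{(-1)^{k+n}}{(k+n)!\,\epsilon(p-1)}.
\]
The two $\epsilon$'s cancel and the prefactor $\tfrac{p-1}{p}$ pops out; the residual combinatorial factor $(-1)^n\binom{k+n}{n}$ equals $\binom{-k-1}{n} = \binom{np+r}{n}$, yielding the claimed value $\tfrac{p-1}{p}\binom{np+r}{n}$.

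The main obstacle — more bookkeeping than conceptual — is the careful tracking of the factors $p$ and $p-1$ produced by the derivatives of the inner linear substitutions $(\sigma-1)p+r+1$ and $(\sigma-1)(p-1)+r+1$ at $\sigma = n+1$. Their quotient is precisely what yields the $(p-1)/p$ correction, and once it is identified correctly the rest reduces to routine manipulations with the Gamma function and the reflection identity $\binom{-k-1}{n} = (-1)^n\binom{k+n}{n}$.
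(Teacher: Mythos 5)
Your proposal is correct and follows essentially the same route as the paper: the generic case via $\Gamma(z+n)/\Gamma(z)=(z)_n$ (the paper cites $\Gamma(z+1)=z\Gamma(z)$), and the singular case by comparing the residues of the two simple poles, where the linear coefficients $p$ and $p-1$ of the arguments produce the factor $(p-1)/p$ and the reflection identity $\binom{-k-1}{n}=(-1)^n\binom{k+n}{n}$ finishes the computation — exactly the paper's limit calculation phrased with $\Gamma(-m+\eta)\sim(-1)^m/(m!\,\eta)$ instead of the derivative of $1/\Gamma$ at $-m$. Your explicit treatment of the intermediate sub-case (denominator pole only, giving $0=\binom{np+r}{n}$) is a point the paper passes over silently, and is a welcome addition.
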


\begin{proof}
If $np+r+1\notin-\mathbb{N}_0$ then the statement is a consequence
of the equality $\Gamma(z+1)=z\Gamma(z)$.
Now recall that for the reciprocal gamma function we have
\begin{equation}
\left.\frac{d}{dx}\frac{1}{\Gamma(x)}\right|_{x=-m}=(-1)^{m}m!,
\end{equation}
for $m\in\mathbb{N}_0$ (see formula (3.30) in \cite{marichev}). Therefore,
if $np+r+1=-N$, with $N\in\mathbb{N}_{0}$, then
\[
\lim_{\sigma\to n+1}\psi_{p,r}(\sigma)=\frac{(p-1)(N+n)!(-1)^{N+n}}{p n!N!(-1)^{N}}
=\frac{p-1}{p}\binom{N+n}{n}(-1)^{n}
\]
\[
=\frac{p-1}{p}\binom{-N-1}{n}=\frac{p-1}{p}\binom{np+r}{n},
\]
where we used the identity $\binom{a}{n}=\binom{n-a-1}{n}(-1)^{n}$.
\end{proof}

Note two identities which the functions $\psi_{p,r}$ satisfy:
\begin{align}
\psi_{p,-1}(\sigma)&=\frac{p-1}{p}\psi_{p,0}(\sigma),\label{dmeipsiminusjeden}\\
\psi_{p,p-1}(\sigma)&=\frac{1}{p}\psi_{p,0}(\sigma+1).\label{dmeipsipeminusjeden}
\end{align}

For rational $p>1$ and $r\in\mathbb{R}$ we define function $V_{p,r}(x)$
as the \textit{inverse Mellin transform} of $\psi_{p,r}(\sigma)$:
\begin{equation}
V_{p,r}(x)=\frac{1}{2\pi\mathrm{i}}\int_{d-\mathrm{i}\infty}^{d+\mathrm{i}\infty}
x^{-\sigma}\psi_{p,r}(\sigma)\,d\sigma
\end{equation}
whenever exists, see \cite{sneddon} for details.
Then (\ref{dmeipsiminusjeden},\ref{dmeipsipeminusjeden}) imply
\begin{align}
V_{p,-1}(x)&=\frac{p-1}{p}V_{p,0}(x),\\
V_{p,p-1}(x)&=\frac{x}{p}V_{p,0}(x).
\end{align}

It turns out that if $p>1$ is rational, $r\in\mathbb{R}$, then $V_{p,r}$
exists and can be expressed as Meijer function.

\begin{theorem}\label{dmeijertheorem}
Let $p=k/l>1$, where $k,l$ are integers such that
$1\le l<k$, and let $r\in\mathbb{R}$.
Then $V_{p,r}$ exists and can be expressed as
\begin{equation}\label{dmeijerth}
V_{p,r}(x)=\frac{p^{r+1/2}\sqrt{l}}{x(p-1)^{r+1/2}\sqrt{2\pi}}\,
G^{k,0}_{k,k}\!\left(\left.\!\!
\begin{array}{ccc}
\alpha_1,\!\!&\!\!\ldots,\!\!&\!\!\alpha_k\\
\beta_1,\!\!&\!\!\ldots,\!\!&\!\!\beta_k
\end{array}
\!\right|\frac{x^{l}}{c(p)^l}\right),
\end{equation}
$x\in(0,c(p))$, where $c(p)=p^p(p-1)^{1-p}$
and the parameters $\alpha_j,\beta_j$ are given by
(\ref{cmelalpha}) and (\ref{cmelbetha}).
Moreover, $\psi_{p,r}$ is the Mellin transform of $V_{p,r}$, i.e. we have
\begin{equation}\label{dmeizgdopsi}
\psi_{p,r}(\sigma)=\int_{0}^{c(p)} x^{\sigma-1} V_{p,r}(x)\,dx,
\end{equation}
for $\Re\sigma>1-\frac{1+r}{p}$.
\end{theorem}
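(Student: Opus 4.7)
The plan is to extend the product-of-gammas identity of Lemma~\ref{cmellemma1} from integer $m$ to complex $\sigma$, rewriting $\psi_{p,r}(\sigma)$ in a form that turns the inverse-Mellin integral into a standard Mellin--Barnes representation of a Meijer $G$-function.

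Concretely, with $p=k/l$, I would apply Gauss's multiplication formula separately to each of the three Gamma factors appearing in $\psi_{p,r}(\sigma)$, exactly as in the proof of Lemma~\ref{cmellemma1}, but now with $(\sigma-1)/l$ playing the role of $m/l$. The result is
\[
\psi_{p,r}(\sigma)=\frac{c(p)^{\sigma-1}}{\sqrt{2\pi l}}\left(\frac{p}{p-1}\right)^{r+1/2}\frac{\prod_{j=1}^{k}\Gamma\!\bigl(\beta_{j}+(\sigma-1)/l\bigr)}{\prod_{j=1}^{k}\Gamma\!\bigl(\alpha_{j}+(\sigma-1)/l\bigr)}
\]
with $\alpha_{j},\beta_{j}$ as in (\ref{cmelalpha})--(\ref{cmelbetha}); this is the continuous analogue of (\ref{cmelbinom}) and is the only genuinely new calculation required.

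Next, I would substitute this representation into the inverse-Mellin integral defining $V_{p,r}(x)$ and perform the affine change of variables $s=(\sigma-1)/l$. Using $x^{-\sigma}c(p)^{\sigma-1}=x^{-1}\bigl(x^{l}/c(p)^{l}\bigr)^{-s}$ and $d\sigma=l\,ds$, the contour integral takes exactly the Mellin--Barnes form defining $G^{k,0}_{k,k}$ at the argument $x^{l}/c(p)^{l}$; collecting the prefactor $l/\sqrt{2\pi l}=\sqrt{l/(2\pi)}$ together with $(p/(p-1))^{r+1/2}/x$ reproduces~(\ref{dmeijerth}).

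For the Mellin-transform identity~(\ref{dmeizgdopsi}) and the convergence condition $\Re\sigma>1-(1+r)/p$: after the Gauss factorization the poles of $\psi_{p,r}$ in the right half-plane are precisely those of $\prod_{j}\Gamma(\beta_{j}+(\sigma-1)/l)$, the rightmost of which is located at $\sigma=1-l\beta_{1}=1-(1+r)/p$ since $\beta_{1}=(r+1)/k$. Existence of $V_{p,r}$, absolute convergence of the Bromwich integral, and the support statement $\mathrm{supp}\,V_{p,r}\subseteq[0,c(p)]$ then follow from the standard theory of $G^{k,0}_{k,k}$ (cf.~\cite{olver}), and Mellin inversion delivers~(\ref{dmeizgdopsi}). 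The step requiring the most care is the bookkeeping of powers of $k$, $l$, $k-l$ and of $2\pi$ in the Gauss multiplication step, together with the verification that the Bromwich contour, after the affine shift $s=(\sigma-1)/l$, lies in the strip of absolute convergence of the Meijer integrand.
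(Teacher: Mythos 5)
Your proposal follows essentially the same route as the paper: substitute $m=\sigma-1$ into the Gauss-multiplication identity of Lemma~\ref{cmellemma1}, perform the affine change of variable $\sigma=lu+1$ in the Bromwich integral so that it becomes the Mellin--Barnes representation of $G^{k,0}_{k,k}$ at the argument $x^{l}/c(p)^{l}$, and obtain (\ref{dmeizgdopsi}) from the resulting gamma-quotient. The only cosmetic difference is in the justification of (\ref{dmeizgdopsi}): the paper invokes formula 2.24.2.1 of \cite{prudnikov3}, explicitly checking $\sum_{j}(\beta_j-\alpha_j)=-1/2<0$ (which controls integrability of the $G$-function at $z=1$), whereas you argue via the location of the rightmost pole of $\prod_{j}\Gamma\bigl(\beta_j+(\sigma-1)/l\bigr)$ and general Mellin-inversion theory.
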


\begin{proof}
Putting $m=\sigma-1$ in (\ref{cmelbinom}) we get
\begin{equation}
\psi_{p,r}(\sigma)=\frac{(p-1)^{p-r-3/2}}{p^{p-r-1/2}\sqrt{2\pi l}}
\frac{\prod_{j=1}^{k}\Gamma(\beta_j+\sigma/l-1/l)}{\prod_{j=1}^{k}\Gamma(\alpha_j+\sigma/l-1/l)}
c(p)^{\sigma}.
\end{equation}
Writing the right hand side as $\Psi(\sigma/l-1/l)c(p)^{\sigma}$,
using the substitution $\sigma=lu+1$ and the definition of the Meijer $G$-function
(see \cite{olver} for example) we obtain
\[
V_{p,r}(x)=\frac{1}{2\pi \mathrm{i}}\int_{d-\mathrm{i}\infty}^{d+\mathrm{i}\infty}
\Psi(\sigma/l-1/l)c(p)^{\sigma}x^{-\sigma}d\sigma
=\frac{lc(p)}{2\pi x\mathrm{i}}\int_{d-\mathrm{i}\infty}^{d+\mathrm{i}\infty}
\Psi(u)\left({x^l}/{c(p)^{l}}\right)^{-u}du
\]
\begin{equation}
=\frac{(p-1)^{p-r-3/2}\sqrt{l}}{z^{1/l} p^{p-r-1/2}\sqrt{2\pi}}
G^{k,0}_{k,k}\!\left(\left.\!\!
\begin{array}{ccc}
\alpha_1,\!\!&\!\!\ldots,\!\!&\!\!\alpha_k\\
\beta_1,\!\!&\!\!\ldots,\!\!&\!\!\beta_k
\end{array}
\!\right|z\right):=\widetilde{V}_{p,r}(z),
\end{equation}
$z=x^l/c(p)^l$, which leads to (\ref{dmeijerth}).
Recall that for existence of the Meijer function of type $G^{k,0}_{k,k}$
there is no restriction on the parameters $\alpha_j,\beta_j$.

On the other hand, since $\sum_{j=1}^{k}(\beta_j-\alpha_j)=-1/2<0$,
we can apply formula 2.24.2.1 from \cite{prudnikov3}.
Substituting $x:=c(p)z^{1/l}$ we have
\[
\int_{0}^{c(p)} x^{\sigma-1}V_{p,r}(x)\,dx
=\frac{c(p)^{\sigma}}{l}\int_{0}^{1} z^{\sigma/l-1}\widetilde{V}_{p,r}(z)\,dz
\]
\[
=\frac{(p-1)^{p-r-3/2}}{p^{p-r-1/2}\sqrt{2\pi l}}c(p)^{\sigma}
\int_{0}^{1}z^{\sigma/l-1/l-1} G^{k,0}_{k,k}\!\left(\left.\!\!
\begin{array}{ccc}
\alpha_1,\!\!&\!\!\ldots,\!\!&\!\!\alpha_k\\
\beta_1,\!\!&\!\!\ldots,\!\!&\!\!\beta_k
\end{array}
\!\right|z\right)\,dz=\psi_{p,r}(\sigma).
\]
In view of the assumptions for the formula 2.24.2.1 in \cite{prudnikov3},
the last equality holds provided $\Re(\sigma/l-1/l)>-\min_{j}\beta_j=-(1+r)/k$.
\end{proof}

Now we are able to describe the measures $\nu(p,r)$
for rational $p$.

\begin{corollary}
Assume that $p=k/l$, where $k,l$ are integers, $1\le l<k$.
If $-1<r\le p-1$ then the probability measure $\nu(p,r)$ is absolutely continuous
and $V_{p,r}$ is the density function, i.e.
\[
\nu(p,r)=V_{p,r}(x)\,dx,\qquad x\in(0,c(p)).
\]
For $r=-1$ we have
\[
\nu(p,-1)=\frac{1}{p}\delta_{0}+V_{p,-1}(x)\,dx,\qquad x\in(0,c(p)).
\]
\end{corollary}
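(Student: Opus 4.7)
The plan is to identify $V_{p,r}(x)\,dx$ with $\nu(p,r)$ by matching moments, exploiting the Mellin transform identity (\ref{dmeizgdopsi}) together with Theorem~\ref{cmelthmellin}, and appealing to uniqueness of moments on the compact interval $[0,c(p)]$.

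Consider first the generic range $-1<r\le p-1$. Specializing (\ref{dmeizgdopsi}) to $\sigma=n+1$, $n\in\mathbb{N}_0$, is admissible: the hypothesis $r>-1$ gives $\Re\sigma=n+1>1-(1+r)/p$, and the same inequality yields $np+r+1>0$, so the first clause of (\ref{dmeijerpsibinomial}) applies and $\psi_{p,r}(n+1)=\binom{np+r}{n}$. Consequently
\[
\int_{0}^{c(p)}x^{n}V_{p,r}(x)\,dx=\binom{np+r}{n}=\int_{0}^{c(p)}x^{n}\,d\nu(p,r)(x),
\]
the right-hand equality being Theorem~\ref{cmelthmellin}. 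Both $V_{p,r}(x)\,dx$ and $\nu(p,r)$ are thus finite measures supported in $[0,c(p)]$ with the same moment sequence; density of polynomials in $C([0,c(p)])$ forces them to coincide, which simultaneously establishes absolute continuity of $\nu(p,r)$ with density $V_{p,r}$ and positivity of $V_{p,r}$.

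The boundary case $r=-1$ reduces to the previous one. The identity $\nu(p,-1)=\frac{1}{p}\delta_{0}+\frac{p-1}{p}\nu(p,0)$ from (\ref{cmelnuminusjeden}) supplies the atomic decomposition, and inverting the Mellin relation (\ref{dmeipsiminusjeden}) gives $V_{p,-1}=\frac{p-1}{p}V_{p,0}$. Substituting $\nu(p,0)=V_{p,0}(x)\,dx$, a case already settled since $r=0$ lies in the generic range, yields $\nu(p,-1)=\frac{1}{p}\delta_{0}+V_{p,-1}(x)\,dx$ on $(0,c(p))$.

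The only delicate point is the behavior of the Mellin strip in (\ref{dmeizgdopsi}) at $\sigma=1$: for $r>-1$ we have $\psi_{p,r}(1)=1=\int_{0}^{c(p)}V_{p,r}(x)\,dx$, so the Mellin integral already carries the full probability mass, whereas at $r=-1$ the lower endpoint of the admissible strip is lost and only moments of order $n\ge1$ are recovered. This is precisely where the atom $\frac{1}{p}\delta_{0}$ enters to restore the missing normalization.
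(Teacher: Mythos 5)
Your argument is correct and follows essentially the same route as the paper: the authors likewise derive the corollary from Theorem~\ref{cmelthmellin}, the Mellin identity (\ref{dmeizgdopsi}), the uniqueness part of the Riesz representation theorem on $\mathcal{C}[0,c(p)]$, and Weierstrass approximation, with the $r=-1$ case handled via (\ref{cmelnuminusjeden}) and $V_{p,-1}=\frac{p-1}{p}V_{p,0}$. Your closing remark on the loss of the endpoint $\sigma=1$ of the Mellin strip at $r=-1$ is a correct and welcome elaboration of why the atom $\frac{1}{p}\delta_0$ must appear.
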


\begin{proof}
This is a consequence of Theorem~\ref{cmelthmellin}, (\ref{dmeizgdopsi}), the uniqueness part
of the Riesz representation theorem for linear functionals on $\mathcal{C}[0,c(p)]$
and of the Weierstrass approximation theorem.
\end{proof}

Now applying Slater's theorem (see \cite{marichev} or (16.17.2) in \cite{olver}) we can represent $V_{p,r}$ as a linear
combination of generalized hypergeometric functions.

\begin{theorem}\label{dmeislaterth}
For $p=k/l$, with $1\le l<k$, $r\in\mathbb{R}$ and $x\in(0,c(p))$ we have
\begin{equation}\label{dmeislaterformula}
V_{p,r}(x)=\gamma(k,l,r)\sum_{h=1}^{k}c(h,k,l,r)\,
{}_{k}F_{k-1}\!\left(\left.
\begin{array}{c}
\!\!\!\mathbf{a}(h,k,l,r)\\
\!\!\!\mathbf{b}(h,k)\end{array}\!
\right|z\right)
z^{(r+h)/k-1/l},
\end{equation}
where $z=x^l/c(p)^l$,
\begin{align}
\gamma(k,l,r)&=\frac{l(p-1)^{p-r-1}}{p^{p-r-1/2}\sqrt{2\pi(k-l)}},\\
c(h,k,l,r)&=\frac{\prod_{j=1}^{h-1}\Gamma\left(\frac{j-h}{k}\right)
\prod_{j=h+1}^{k}\Gamma\left(\frac{j-h}{k}\right)}
{\prod_{j=1}^{l}\Gamma\left(\frac{j}{l}-\frac{r+h}{k}\right)
\prod_{j=l+1}^{k}\Gamma\left(\frac{r+j-l}{k-l}-\frac{r+h}{k}\right)},
\end{align}
and the parameter vectors of the hypergeometric functions are
\begin{align}
\mathbf{a}(h,k,l,r)&=
\left(\left\{\frac{r+h}{k}-\frac{j-l}{l}\right\}_{j=1}^{l},
\left\{\frac{r+h}{k}-\frac{r+j-k}{k-l}\right\}_{j=l+1}^{k}\right),\\
\mathbf{b}(h,k)&=
\left(\left\{\frac{k+h-j}{k}\right\}_{j=1}^{h-1},
\left\{\frac{k+h-j}{k}\right\}_{j=h+1}^{k}\right).
\end{align}
\end{theorem}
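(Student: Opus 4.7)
The plan is to start from the Meijer $G$-function representation in Theorem~\ref{dmeijertheorem} and apply Slater's theorem, closing the Mellin--Barnes contour to the left and summing the residues at the poles of the numerator gamma factors; the resulting $k$ arithmetic progressions of residues assemble into $k$ generalized hypergeometric series.

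First I would verify the simple-pole hypothesis. The poles of $\prod_{j=1}^{k}\Gamma(\beta_j-s)$ lie at $s=\beta_j+\nu$ with $\nu\in\mathbb{N}_{0}$; since $\beta_i-\beta_j=(i-j)/k$ is never a nonzero integer for $1\le i\ne j\le k$, the $k$ arithmetic progressions are disjoint and every pole is simple. No genericity or limiting argument in $r$ is required.

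Next I would invoke the Slater expansion for $G^{k,0}_{k,k}$ (formula 16.17.2 in \cite{olver}, cf.\ \cite{marichev}), which under the above hypothesis reads
\[
G^{k,0}_{k,k}\!\left(\left.\!\!\begin{array}{c}\alpha_1,\ldots,\alpha_k\\ \beta_1,\ldots,\beta_k\end{array}\!\right|z\right)
=\sum_{h=1}^{k}\frac{\prod_{j\ne h}\Gamma(\beta_j-\beta_h)}{\prod_{j=1}^{k}\Gamma(\alpha_j-\beta_h)}\,z^{\beta_h}\,
{}_{k}F_{k-1}\!\left(\left.\!\!\begin{array}{c}1+\beta_h-\alpha_1,\ldots,1+\beta_h-\alpha_k\\ 1+\beta_h-\beta_j,\ j\ne h\end{array}\!\right|z\right).
\]
Substituting $\beta_h=(r+h)/k$ produces $\prod_{j\ne h}\Gamma((j-h)/k)$ in the numerator, matching the top of $c(h,k,l,r)$; splitting $\prod_{j}\Gamma(\alpha_j-\beta_h)$ at $j=l$ and inserting (\ref{cmelalpha}) yields the denominator of $c(h,k,l,r)$ verbatim.

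The hypergeometric parameters transform analogously. For $j\le l$ one has $1+\beta_h-j/l=(r+h)/k-(j-l)/l$, and for $j>l$ one has $1+\beta_h-(r+j-l)/(k-l)=(r+h)/k-(r+j-k)/(k-l)$, so the upper parameters coincide with $\mathbf{a}(h,k,l,r)$; the lower parameters $1+\beta_h-\beta_j=(k+h-j)/k$ give $\mathbf{b}(h,k)$. Finally, the prefactor $1/x$ in (\ref{dmeijerth}) equals $c(p)^{-1}z^{-1/l}$, so merging it with $z^{\beta_h}$ produces the exponent $(r+h)/k-1/l$; collecting the remaining powers of $p$ and $p-1$ through $c(p)=p^{p}(p-1)^{1-p}$ (together with $p-1=(k-l)/l$) reduces the overall scalar to $\gamma(k,l,r)$. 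The whole argument is Slater plus bookkeeping, and the only genuine obstacle is matching the scalar prefactor $\gamma(k,l,r)$ exactly.
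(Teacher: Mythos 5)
Your proposal follows exactly the paper's route: the paper's own proof consists of observing that $\beta_i-\beta_j=(i-j)/k$ is never an integer for $i\ne j$, so all poles are simple and Slater's theorem (16.17.2 in \cite{olver}) applies to the $G^{k,0}_{k,k}$ representation of Theorem~\ref{dmeijertheorem}; the parameter matching and the reduction of the prefactor to $\gamma(k,l,r)$ via $p-1=(k-l)/l$ are left as bookkeeping, which you carry out correctly. No discrepancy to report.
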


\begin{proof}
It is easy to check that if $i\ne j$
then the difference $\beta_i-\beta_j$
of coefficients (\ref{cmelbetha}) is not an integer and the Slater's
formula is applicable, see \cite{olver}.
\end{proof}

The easiest case is $p=2$.

\begin{corollary} For $p=2$, $r\in\mathbb{R}$ we have
\[
V_{2,r}(x)=\frac{\cos\left(r\cdot\arccos\sqrt{x/4}\right)}{\pi\sqrt{x^{1-r}(4-x)}},
\]
$x\in(0,4)$. In particular
\begin{align*}
V_{2,0}(x)&=\frac{1}{\pi\sqrt{x(4-x)}},\\
V_{2,-1/2}(x)&=\frac{1}{2\pi}\sqrt{\frac{\sqrt{x}+2}{\sqrt{x^3}(4-x)}},\\
V_{2,1/2}(x)&=\frac{1}{2\pi}\sqrt{\frac{\sqrt{x}+2}{\sqrt{x}(4-x)}},\\
V_{2,1}(x)&=\frac{\sqrt{x}}{2\pi\sqrt{4-x}}.
\end{align*}
\end{corollary}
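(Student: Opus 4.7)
The plan is to identify $V_{2,r}$ with the elementary function on the right, call it $F(x)$, by matching their Mellin transforms on $(0,4)$; this forces $F=V_{2,r}$ by (\ref{dmeizgdopsi}) and uniqueness of the Mellin transform.

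First I would perform the substitution $x=4\cos^{2}\theta$, $\theta\in(0,\pi/2)$. Under it $\sqrt{x/4}=\cos\theta$ so $\arccos\sqrt{x/4}=\theta$; also $4-x=4\sin^{2}\theta$, $\sqrt{x^{1-r}(4-x)}=2^{2-r}\cos^{1-r}\theta\,\sin\theta$, and $|dx|=8\sin\theta\cos\theta\,d\theta$. Elementary cancellation collapses $F(x)\,dx$ to
\[
F(x)\,dx=\frac{2^{r+1}}{\pi}\cos(r\theta)\cos^{r}\theta\,d\theta,
\]
so that for $\Re\sigma>(1-r)/2$,
\[
\int_{0}^{4}x^{\sigma-1}F(x)\,dx
=\frac{2^{2\sigma+r-1}}{\pi}\int_{0}^{\pi/2}\cos(r\theta)\cos^{2\sigma+r-2}\theta\,d\theta.
\]

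Second, I would invoke the classical evaluation
\[
\int_{0}^{\pi/2}\cos(a\theta)\cos^{m}\theta\,d\theta
=\frac{\pi\,\Gamma(m+1)}{2^{m+1}\,\Gamma\!\left(\tfrac{m+a}{2}+1\right)\Gamma\!\left(\tfrac{m-a}{2}+1\right)},\qquad\Re m>-1,
\]
(a standard integral, e.g.\ Gradshteyn--Ryzhik 3.631.8, or obtainable from the Fourier expansion of $\cos^{m}\theta$ via $(e^{i\theta}+e^{-i\theta})^{m}/2^{m}$) applied with $m=2\sigma+r-2$ and $a=r$. The right-hand side collapses to $\Gamma(2\sigma+r-1)/\bigl(\Gamma(\sigma)\Gamma(\sigma+r)\bigr)=\psi_{2,r}(\sigma)$, which by (\ref{dmeizgdopsi}) is precisely the Mellin transform of $V_{2,r}$. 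Since $F$ and $V_{2,r}$ are both continuous on $(0,4)$ and their Mellin transforms agree on a common strip, Mellin inversion gives $F=V_{2,r}$.

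The four explicit specialisations then reduce to trigonometry: $\cos 0=1$ for $r=0$; $\cos(\arccos t)=t$ for $r=1$; and the half-angle identity $\cos\!\bigl(\tfrac{1}{2}\arccos t\bigr)=\sqrt{(1+t)/2}$ applied at $t=\sqrt{x/4}$ yields $\tfrac{1}{2}\sqrt{2+\sqrt{x}}$ for $r=\pm 1/2$. The only nontrivial step in the whole argument is locating (or deriving) the cosine integral formula and bookkeeping the resulting $\Gamma$-factors; once that is in hand, everything falls into place and the special cases are one-line substitutions.
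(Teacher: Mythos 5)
Your proof is correct, and it takes a genuinely different route from the paper's. The paper \emph{derives} the formula by specializing the Slater-type expansion of Theorem~\ref{dmeislaterth} to $k=2$, $l=1$: it computes $\gamma(2,1,r)$ and the coefficients $c(1,2,1,r)$, $c(2,2,1,r)$ via Euler's reflection formula, evaluates the two resulting ${}_{2}F_{1}$'s by the closed forms 15.4.12 and 15.4.16 of \cite{olver}, and collapses the sum with $\cos A\cos B+\sin A\sin B=\cos(A-B)$ together with $\pi/2-\arcsin\sqrt{z}=\arccos\sqrt{z}$. You instead take the elementary expression as a candidate and \emph{verify} that its Mellin transform is
\[
\psi_{2,r}(\sigma)=\frac{\Gamma(2\sigma+r-1)}{\Gamma(\sigma)\Gamma(\sigma+r)},
\]
via $x=4\cos^{2}\theta$ and the classical cosine-power integral; your bookkeeping of the powers of $2$ and the $\Gamma$-factors is right (with $m=2\sigma+r-2$, $a=r$ one gets $\frac{m+a}{2}+1=\sigma+r$ and $\frac{m-a}{2}+1=\sigma$, so the $2^{m+1}$ and $\pi$ cancel exactly), and the identification with $V_{2,r}$ then follows from (\ref{dmeizgdopsi}) together with Mellin (equivalently, on the compact interval $[0,4]$, Hausdorff-moment) uniqueness. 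The trade-off is that the paper's route produces the formula from the general $G$-function machinery without guessing, whereas yours presupposes the answer but needs only one classical integral and avoids the Slater expansion entirely; both ultimately rest on Theorem~\ref{dmeijertheorem} for the characterization of $V_{2,r}$ by its Mellin transform $\psi_{2,r}$. The four specializations are handled the same way in both arguments, via $\cos\bigl(\tfrac{1}{2}\arccos t\bigr)=\sqrt{(1+t)/2}$.
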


The density $V_{2,0}$ gives the \textit{arcsine distribution} $\nu(2,0)$.
Note that if $|r|>1$ then $V_{2,r}(x)<0$ for some values of $x\in(0,4)$.

\begin{proof} We take $k=2$, $l=1$ so that $c(2)=4$, $z=x/4$ and
\[
\gamma(2,1,r)=\frac{2^r}{4\sqrt{\pi}}.
\]
Using the Euler's reflection formula, and the identity $\Gamma(1+r/2)=\Gamma(r/2)r/2$, we get
\begin{align*}
c(1,2,1,r)&=\frac{\Gamma(1/2)}{\Gamma\big((1-r)/2\big)\Gamma\big((1+r)/2\big)}
=\frac{\cos(\pi r/2)}{\sqrt{\pi}},\\
c(2,2,1,r)&=\frac{\Gamma(-1/2)}{\Gamma(-r/2)\Gamma(r/2)}=\frac{r\sin(\pi r/2)}{\sqrt{\pi}}.
\end{align*}
We also need formulas for two hypergeometric functions, namely
\begin{align*}
{}_{2}F_{1}\!\left(\left.
\frac{1+r}{2},\frac{1-r}{2};\,\frac{1}{2}\,
\right|z\right)&=\frac{\cos(r\arcsin\sqrt{z})}{\sqrt{1-z}},\\
{}_{2}F_{1}\!\left(\left.
\frac{2+r}{2},\frac{2-r}{2};\,\frac{3}{2}\,
\right|z\right)&=\frac{\sin(r\arcsin\sqrt{z})}{r\sqrt{z(1-z)}},
\end{align*}
see 15.4.12 and 15.4.16 in \cite{olver}.
Now we can write
\[
V_{2,r}(x)=
\frac{2^r\cos(r\pi/2)\cos(r\arcsin\sqrt{z})}{4\pi\sqrt{1-z}}z^{(r-1)/2}
+\frac{2^r \sin(r\pi/2)\sin(r\arcsin\sqrt{z})}{4\pi\sqrt{z(1-z)}}z^{r/2}
\]
\[
=\frac{2^r z^{(r-1)/2}}{4\pi\sqrt{1-z}}\cos\left(r\pi/2-r\arcsin\sqrt{z}\right)
=\frac{x^{(r-1)/2}\cos\left(r\cdot\arccos\sqrt{x/4}\right)}{\pi\sqrt{4-x}},
\]
which concludes the proof of the main formula.
For the particular cases we use the identity:
$\cos\left(\frac{1}{2}\arccos(t)\right)=\sqrt{(t+1)/2}$.
\end{proof}

\textbf{Remark.}
Observe that
\[
\frac{V_{2,0}\left(\sqrt{x}\right)}{2\sqrt{x}}=\frac{1}{4}V_{2,-1/2}\left(\frac{x}{4}\right).
\]
It means that if $X,Y$ are random variables such that $X\sim\nu(2,0)$ and $Y\sim\nu(2,-1/2)$
then $X^2\sim4Y$. This can be also derived from the relation $\binom{2n-1/2}{n}4^n=\binom{4n}{2n}$
(sequence A001448 in OEIS).

From (\ref{cmelnuminusjeden}) we obtain

\begin{corollary}
\[
\nu(2,-1)=\frac{1}{2}\delta_0+\frac{1}{2\pi\sqrt{x(4-x)}}\chi_{(0,4)}(x)\,dx.
\]
\end{corollary}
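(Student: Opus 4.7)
The plan is to specialize the general relation (\ref{cmelnuminusjeden}) to $p=2$, which directly reads
\[
\nu(2,-1)=\tfrac{1}{2}\delta_{0}+\tfrac{1}{2}\nu(2,0),
\]
and then substitute the explicit density of $\nu(2,0)$ just computed. Since the preceding corollary gives $V_{2,0}(x)=\frac{1}{\pi\sqrt{x(4-x)}}$ for $x\in(0,4)$, multiplying by $\frac{1}{2}$ yields the stated absolutely continuous part $\frac{1}{2\pi\sqrt{x(4-x)}}\chi_{(0,4)}(x)\,dx$.

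So the entire argument is a one-line substitution: apply Proposition (\ref{cmelnuminusjeden}) at $p=2$, invoke $V_{2,0}$ from the previous corollary, and collect terms. There is no obstacle to overcome, because all the real work, namely the identification of $\nu(2,0)$ with the arcsine distribution and the presence of an atom of mass $1/p$ at the origin for $\nu(p,-1)$, has already been carried out earlier in the paper. The only thing worth double-checking is that the atomic and absolutely continuous parts do not interfere, which is clear since $\nu(2,0)$ has no mass at $\{0\}$ (its density is integrable near $0$ but the point $\{0\}$ has Lebesgue measure zero).
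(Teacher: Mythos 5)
Your proposal is correct and coincides with the paper's own argument: the authors likewise obtain this corollary by specializing (\ref{cmelnuminusjeden}) to $p=2$ and inserting the density $V_{2,0}(x)=\frac{1}{\pi\sqrt{x(4-x)}}$ from the preceding corollary. Nothing further is needed.
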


\section{Some special cases for $p=3$ and $p=3/2$}

From now on we are going to study some special cases for $k=3$, i.e. for $p=3$ and $p=3/2$.
Then in the formula (\ref{dmeislaterformula}) we have three  hypergeometric functions of type ${}_{3}F_{2}$
with lower parameters
\[
\mathbf{b}(1,3)=\left(\frac{1}{3},\frac{2}{3}\right),\qquad
\mathbf{b}(2,3)=\left(\frac{2}{3},\frac{4}{3}\right),\qquad
\mathbf{b}(3,3)=\left(\frac{4}{3},\frac{5}{3}\right).
\]
It turns out that
for particular choices of $r$ these hypergeometric functions reduce to
the type  ${}_{2}F_{1}$, belonging to
the following one-parameter family:
\begin{equation}
{}_{2}F_{1}\!\left(\left.
\frac{t}{2},\frac{t+1}{2};\,t\right|z\right)=\frac{\left(1+\sqrt{1-z}\right)^{1-t}}{2^{1-t}\sqrt{1-z}}
\end{equation}
(see formula~15.4.18 in \cite{olver}).

Let us start with $p=3$.

\begin{theorem}\label{etwtrzy}
For $p=3$ we have
\begin{align*}
V_{3,0}(x)
&=\frac{\left(1+\sqrt{1-z}\right)^{1/3}}{9\pi\sqrt{3(1-z)}}z^{-2/3}
+\frac{\left(1+\sqrt{1-z}\right)^{-1/3}}{9\pi\sqrt{3(1-z)}}z^{-1/3},\\
V_{3,1}(x)
&=\frac{\left({1+\sqrt{1-z}}\right)^{2/3}}{6\pi\sqrt{3(1-z)}}z^{-1/3}
+\frac{\left({1+\sqrt{1-z}}\right)^{-2/3}}{6\pi\sqrt{3(1-z)}}z^{1/3},\\
V_{3,2}(x)
&=\frac{\left(1+\sqrt{1-z}\right)^{1/3}}{4\pi\sqrt{3(1-z)}}z^{1/3}
+\frac{\left(1+\sqrt{1-z}\right)^{-1/3}}{4\pi\sqrt{3(1-z)}}z^{2/3},
\end{align*}
where $z=4x/27$ and $x\in(0,27/4)$.
\end{theorem}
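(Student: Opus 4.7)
My plan is to specialize Theorem~\ref{dmeislaterth} to $p=3$, i.e.\ $k=3$, $l=1$, for each $r\in\{0,1,2\}$ in turn. With $l=1$ the parameter vectors of~(\ref{dmeijerth}) read $(\alpha_1,\alpha_2,\alpha_3)=(1,(r+1)/2,(r+2)/2)$ and $(\beta_1,\beta_2,\beta_3)=((r+1)/3,(r+2)/3,(r+3)/3)$. The crucial observation is that $\beta_{3-r}=1=\alpha_1$ in every one of the three cases; this single parameter coincidence simultaneously eliminates one summand in the Slater expansion and reduces each of the remaining ones.

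More precisely, in~(\ref{dmeislaterformula}) the coefficient $c(h,3,1,r)$ at $h=3-r$ carries the factor $\Gamma(\alpha_1-\beta_{3-r})=\Gamma(0)$ in its denominator and therefore vanishes, so only two of the three $h$-terms survive. In each of the two survivors the very same coincidence cancels one upper parameter against one lower parameter, collapsing the ${}_3F_2$ to a ${}_2F_1$. A short computation of $\mathbf{a}(h,3,1,r)$ and $\mathbf{b}(h,3)$ shows that these ${}_2F_1$'s all have the form ${}_2F_1\!\left(t/2,(t+1)/2;\,t\,|\,z\right)$ flagged in the paragraph preceding the theorem, with $t\in\{2/3,4/3\}$ for $r\in\{0,2\}$ and $t\in\{1/3,5/3\}$ for $r=1$. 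Olver's formula 15.4.18 then rewrites each as $(1+\sqrt{1-z})^{1-t}/(2^{1-t}\sqrt{1-z})$, producing exactly the factors $(1+\sqrt{1-z})^{\pm1/3}$ for $r\in\{0,2\}$ and $(1+\sqrt{1-z})^{\pm2/3}$ for $r=1$, multiplied by the explicit monomials $z^{(r+h)/3-1}$ coming from~(\ref{dmeislaterformula}).

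What is then left is to verify that the combined prefactors $\gamma(3,1,r)\,c(h,3,1,r)\,2^{t-1}$ simplify to the announced $1/(9\pi\sqrt{3})$, $1/(6\pi\sqrt{3})$ and $1/(4\pi\sqrt{3})$ respectively. Since every $\Gamma$-argument appearing here lies in $\tfrac{1}{6}\mathbb{Z}$, this is a routine application of the Euler reflection formula $\Gamma(s)\Gamma(1-s)=\pi/\sin(\pi s)$ together, where convenient, with the Gauss duplication formula. The main obstacle is really just this constant-tracking, not any new idea. As a sanity check (and an available shortcut for $r=2$) one verifies directly from the stated formulas that $V_{3,2}(x)=(x/3)V_{3,0}(x)$, which is precisely the identity~(\ref{cmelnupeminusjeden}): multiplying $V_{3,0}(x)$ by $x/3=9z/4$ sends $z^{-2/3},z^{-1/3}$ to $z^{1/3},z^{2/3}$ and rescales the prefactor $1/(9\pi\sqrt{3})$ to $1/(4\pi\sqrt{3})$, reproducing $V_{3,2}(x)$ on the nose.
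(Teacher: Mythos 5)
Your proposal is correct and follows essentially the same route as the paper: specialize the Slater expansion of Theorem~\ref{dmeislaterth} to $k=3$, $l=1$, note that one of the three terms drops out, collapse the surviving ${}_3F_2$'s to ${}_2F_1\!\left(t/2,(t+1)/2;t\,\middle|\,z\right)$ via formula 15.4.18, and handle $r=2$ by $V_{3,2}(x)=\frac{x}{3}V_{3,0}(x)$. Your unified explanation via the single coincidence $\beta_{3-r}=\alpha_1=1$ is a tidy way to package what the paper verifies case by case, but it is the same argument.
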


\begin{proof} We have $c(3)=27/4$ and 
\begin{align*}
\gamma(3,1,r)&=\frac{2^{1-r}}{3^{5/2-r}\sqrt{\pi}},\\
c(1,3,1,r)&=\frac{2^{(r+1)/3}\sin\left(\pi(1+r)/3\right)}{\sqrt{3\pi}},\\
c(2,3,1,r)&=\frac{2^{(r-1)/3}(1-r)\sin\left(\pi(1-r)/3\right)}{\sqrt{3\pi}},\\
c(3,3,1,r)&=\frac{2^{(r-6)/3}(3-r)r\sin\left(\pi r/3\right)}{\sqrt{3\pi}}
\end{align*}
and the upper parameters for the hypergeometric functions are
\begin{align*}
\mathbf{a}(1,3,1,r)&=\left(\frac{1+r}{3},\frac{5-r}{6},\frac{2-r}{6}\right),\\
\mathbf{a}(2,3,1,r)&=\left(\frac{2+r}{3},\frac{7-r}{6},\frac{4-r}{6}\right),\\
\mathbf{a}(3,3,1,r)&=\left(\frac{3+r}{3},\frac{9-r}{6},\frac{6-r}{6}\right).
\end{align*}

For $r=0$ we have $c(3,3,1,0)=0$ and
\[
V_{3,0}(x)=\frac{2}{9\sqrt{3\pi}}\left(\frac{2^{1/3}}{2\sqrt{\pi}}
\frac{\left(1+\sqrt{1-z}\right)^{1/3}}{2^{1/3}\sqrt{1-z}}z^{-2/3}
+\frac{2^{-1/3}}{2\sqrt{\pi}}\frac{\left(1+\sqrt{1-z}\right)^{-1/3}}{2^{-1/3}\sqrt{1-z}}z^{-1/3}
\right)
\]
\[
=\frac{\left(1+\sqrt{1-z}\right)^{1/3}}{9\pi\sqrt{3(1-z)}}z^{-2/3}
+\frac{\left(1+\sqrt{1-z}\right)^{-1/3}}{9\pi\sqrt{3(1-z)}}z^{-1/3}.
\]

For $r=1$ the second term vanishes and 
\[
V_{3,1}(x)=\frac{1}{3\sqrt{3\pi}}
\left(\frac{2^{2/3}}{2\sqrt{\pi}}\frac{\left({1+\sqrt{1-z}}\right)^{2/3}}{2^{2/3}\sqrt{1-z}}z^{-1/3}
+\frac{2^{-5/3}}{\sqrt{\pi}}\frac{\left({1+\sqrt{1-z}}\right)^{-2/3}}{2^{-2/3}\sqrt{1-z}}z^{1/3}\right)
\]
\[
=\frac{\left({1+\sqrt{1-z}}\right)^{2/3}}{6\pi\sqrt{3(1-z)}}z^{-1/3}
+\frac{\left({1+\sqrt{1-z}}\right)^{-2/3}}{6\pi\sqrt{3(1-z)}}z^{1/3}.
\]
Finally, for the third formula we can apply (\ref{cmelnupeminusjeden}),
which gives us $V_{3,2}(x)=\frac{9z}{4}V_{3,0}(x)$.
\end{proof}

Similarly we work with $p=3/2$.

\begin{theorem}\label{etwtrzydrugie}
For $p=3/2$ we have
\begin{align*}
V_{3/2,-1/2}(x)
&=\frac{\left({1+\sqrt{1-z}}\right)^{2/3}}{3\pi\sqrt{3(1-z)}}z^{-1/3}
+\frac{\left({1+\sqrt{1-z}}\right)^{-2/3}}{3\pi\sqrt{3(1-z)}}z^{1/3},\\
V_{3/2,0}(x)
&=
\frac{\left(1+\sqrt{1-z}\right)^{1/3}}{3\pi\sqrt{1-z}}z^{-1/6}
+\frac{\left({1+\sqrt{1-z}}\right)^{-1/3}}{3\pi\sqrt{1-z}}z^{1/6},\\
V_{3/2,1/2}(x)
&=
\frac{\left({1+\sqrt{1-z}}\right)^{1/3}}{\pi\sqrt{3(1-z)}}z^{1/3}
+\frac{\left({1+\sqrt{1-z}}\right)^{-1/3}}{\pi\sqrt{3(1-z)}}z^{2/3},
\end{align*}
where $z=4x^2/27$, $x\in(0,\sqrt{27/4})$.
\end{theorem}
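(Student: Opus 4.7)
The approach mimics the proof of Theorem~\ref{etwtrzy}: apply Theorem~\ref{dmeislaterth} with $p=3/2$, so $k=3$, $l=2$, $c(p)=3\sqrt{3}/2$, and $z=x^{2}/c(p)^{2}=4x^{2}/27$. The resulting Slater decomposition (\ref{dmeislaterformula}) is a priori a sum of three ${}_{3}F_{2}$-terms indexed by $h\in\{1,2,3\}$, with upper parameter vectors
\[
\mathbf{a}(h,3,2,r)=\left(\frac{r+h}{3}+\frac{1}{2},\;\frac{r+h}{3},\;\frac{h-2r}{3}\right)
\]
and lower vectors $\mathbf{b}(h,3)$ already listed at the start of this section. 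For each of the three special values $r\in\{-1/2,0,1/2\}$ I would verify two things.

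First, exactly one of the coefficients $c(h,3,2,r)$ vanishes, because some factor $\Gamma(j/l-(r+h)/k)$ or $\Gamma\bigl((r+j-l)/(k-l)-(r+h)/k\bigr)$ in its denominator has a pole at a nonpositive integer. A short case-check shows that the $h=2$ term is killed when $r=-1/2$, the $h=3$ term is killed when $r=0$, and the $h=1$ term is killed when $r=1/2$. Thus each formula in the statement has exactly two surviving terms, matching the two summands claimed.

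Second, for each surviving pair $(r,h)$ one of the upper parameters of $\mathbf{a}(h,3,2,r)$ coincides with one of the lower parameters of $\mathbf{b}(h,3)$, so the ${}_{3}F_{2}$ collapses to a ${}_{2}F_{1}$. A direct inspection shows that the reduced ${}_{2}F_{1}$ always belongs to the one-parameter family
\[
{}_{2}F_{1}\!\left(\left.\tfrac{t}{2},\tfrac{t+1}{2};\,t\right|z\right)=\frac{(1+\sqrt{1-z})^{1-t}}{2^{1-t}\sqrt{1-z}},
\]
with $t\in\{1/3,5/3\}$ when $r=-1/2$ and $t\in\{2/3,4/3\}$ when $r=0$ or $r=1/2$. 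The corresponding values of the exponent $1-t$ are $\pm 2/3$, $\pm 1/3$, $\pm 1/3$, which match the powers of $1+\sqrt{1-z}$ appearing in the three stated formulas. Substituting this elementary identity into (\ref{dmeislaterformula}) and simplifying the prefactor $\gamma(3,2,r)\cdot c(h,3,2,r)$ together with the $2^{1-t}$ produced by the identity yields the claimed expressions; the resulting constants collapse via the Gauss duplication formula $\Gamma(z)\Gamma(z+1/2)=\sqrt{\pi}\,2^{1-2z}\Gamma(2z)$.

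There is no conceptual obstacle; the only real work is bookkeeping of gamma-function arguments and powers of $2$ and $3$. As a consistency check one can instead derive $V_{3/2,1/2}$ from $V_{3/2,0}$ via identity (\ref{cmelnupeminusjeden}), which for $p=3/2$ reads $V_{3/2,1/2}(x)=\tfrac{2x}{3}V_{3/2,0}(x)$: since $\tfrac{2x}{3}=\sqrt{3z}$, the claimed formula for $V_{3/2,1/2}$ is $\sqrt{3z}$ times the claimed formula for $V_{3/2,0}$, providing an independent sanity check on the whole calculation.
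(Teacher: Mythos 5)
Your proposal is correct and follows essentially the same route as the paper: apply the Slater decomposition of Theorem~\ref{dmeislaterth} with $k=3$, $l=2$, observe that for each of $r=-1/2,0,1/2$ exactly one coefficient $c(h,3,2,r)$ vanishes, and reduce each surviving ${}_{3}F_{2}$ to the ${}_{2}F_{1}$ family of formula 15.4.18 via cancellation of an upper with a lower parameter (your identified values of $h$ and $t$, and the resulting exponents of $1+\sqrt{1-z}$ and of $z$, all check out). The only cosmetic difference is that the paper obtains $V_{3/2,1/2}$ from $V_{3/2,0}$ via the relation $V_{3/2,1/2}(x)=\sqrt{3z}\,V_{3/2,0}(x)$ coming from (\ref{cmelnupeminusjeden}), which you instead relegate to a consistency check while computing that case directly.
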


\begin{proof}
We have $c(3/2)=\sqrt{27}/2$,
\begin{align*}
\gamma(3,2,r)&=\frac{2\cdot 3^r}{3\sqrt{\pi}},\\
c(1,3,2,r)&=\frac{2^{(1-2r)/3}\sin\left(\pi(1-2r)/3\right)}{\sqrt{3\pi}},\\
c(2,3,2,r)&=\frac{2^{(-1-2r)/3}(1+2r)\sin\left(\pi(1+2r)/3\right)}{\sqrt{3\pi}},\\
c(3,3,2,r)&=\frac{2^{(-3-2r)/3}(3+2r)r\sin\left(2\pi r/3\right)}{\sqrt{3\pi}}
\end{align*}
and
\begin{align*}
\mathbf{a}(1,3,2,r)&=\left(\frac{5+2r}{6},\frac{1+r}{3},\frac{1-2r}{3}\right),\\
\mathbf{a}(2,3,2,r)&=\left(\frac{7+2r}{6},\frac{2+r}{3},\frac{2-2r}{3}\right),\\
\mathbf{a}(3,3,2,r)&=\left(\frac{9+2r}{6},\frac{3+r}{3},\frac{3-2r}{3}\right).
\end{align*}

For $r=-1/2$ the second term vanishes and we have
\[
V_{3/2,-1/2}(x)=\frac{2}{3\sqrt{3\pi}}
\left(\frac{2^{2/3}}{2\sqrt{\pi}}\frac{\left({1+\sqrt{1-z}}\right)^{2/3}}{2^{2/3}\sqrt{1-z}}z^{-1/3}
+\frac{2^{-2/3}}{2\sqrt{\pi}}\frac{\left({1+\sqrt{1-z}}\right)^{-2/3}}{2^{-2/3}\sqrt{1-z}}z^{1/3}\right)
\]
\[
=\frac{\left({1+\sqrt{1-z}}\right)^{2/3}}{3\pi\sqrt{3(1-z)}}z^{-1/3}
+\frac{\left({1+\sqrt{1-z}}\right)^{-2/3}}{3\pi\sqrt{3(1-z)}}z^{1/3}.
\]

For $r=0$ we note that $c(3,3,2,0)=0$ and we get
\[
V_{3/2,0}(x)=\frac{2}{3\sqrt{\pi}}\left(\frac{2^{1/3}}{2\sqrt{\pi}}
\frac{\left(1+\sqrt{1-z}\right)^{1/3}}{2^{1/3}\sqrt{1-z}}z^{-1/6}
+\frac{2^{-1/3}}{2\sqrt{\pi}}\frac{\left(1+\sqrt{1-z}\right)^{-1/3}}{2^{-1/3}\sqrt{1-z}}z^{1/6}
\right)
\]
\[
=
\frac{\left(1+\sqrt{1-z}\right)^{1/3}}{3\pi\sqrt{1-z}}z^{-1/6}
+\frac{\left({1+\sqrt{1-z}}\right)^{-1/3}}{3\pi\sqrt{1-z}}z^{1/6}.
\]
Finally, by (\ref{cmelnupeminusjeden}) we have
$V_{3/2,1/2}(x)=\sqrt{3z}V_{3/2,0}(x)$, which leads to the third formula.
\end{proof}

Let us mention that the integer sequence $\binom{3n/2-1/2}{n}4^n$:
\[
1,4,30,256,2310,21504,204204,1966080,19122246,\ldots
\]
appears in OEIS as A091527.
It is the moment sequence of the density function
\begin{equation}
V(x):=V_{3/2,-1/2}(x/4)/4
\end{equation}
\[
=\frac{x^{4/3}+9\cdot 2^{4/3}\left(1+\sqrt{1-x^2/108}\right)^{4/3}}
{2^{8/3}\cdot 3^{5/2}\cdot\pi\cdot x^{2/3}\sqrt{1-x^2/108} \left(1+\sqrt{1-x^2/108}\right)^{2/3}}
\]
on $(0,6\sqrt{3})$ (see Theorem~\ref{etwtrzydrugie})
and its generating function is $\mathcal{D}_{3/2,-1/2}(4z)$ (see Corollary~\ref{bcortrzydrugie}).
The even numbered terms constitute sequence A061162, i.e.
\[
\mathrm{A091527}(2n)=\mathrm{A061162}(n)=\binom{3n-1/2}{2n}16^n=\frac{(6n)!n!}{(3n)!(2n)!^2},
\]
so this is the moment sequence for the density function
\begin{equation}
\frac{V\left(\sqrt{x}\right)}{2\sqrt{x}}
=\frac{x^{2/3}+9\cdot 2^{4/3}\left(1+\sqrt{1-x/108}\right)^{4/3}}
{2^{11/3}\cdot 3^{5/2}\cdot\pi\cdot x^{5/6}\sqrt{1-x/108} \left(1+\sqrt{1-x/108}\right)^{2/3}}
\end{equation}
on the interval $(0,108)$.

From (\ref{cmelnuminusjeden}) we can also write down the measures $\nu(3,-1)$ and $\nu(3/2,-1)$:
\[
\nu(3,-1)=\frac{1}{3}\delta_0+\frac{2}{3}V_{3,0}(x)\,dx,
\]
\[
\nu(3/2,-1)=\frac{2}{3}\delta_0+\frac{1}{3}V_{3/2,0}(x)\,dx.
\]

\section{Some convolution relations}

In this part we are going to prove a few formulas involving
the measures $\nu(p,r)$, the measures $\mu(p,r)$ studied in
\cite{mlotkowski2010,mpz2012}, and various types of convolutions.
First we observe that the families
$\nu(p,r)$ and $\mu(p,r)$ are related through the Mellin convolution.

\begin{proposition}\label{fpropmellin}
For $c>0$ define probability measure $\eta(c)$ by
\[
\eta(c):=c\cdot x^{c-1}\,dx,\quad x\in[0,1].
\]
Then for $p>1$, $0<r\le p$ we have
\[
\nu(p,r-1)\circ\eta\left(r/(p-1)\right)=\mu(p,r).
\]
\end{proposition}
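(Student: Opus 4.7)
The strategy is to compare moment sequences of both sides, which is sufficient because all three measures involved are compactly supported (and hence determined by their moments via the Hausdorff moment problem).

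First I would record the relevant moments. By definition, the $n$th moment of $\eta(c)$ is
\[
\int_{0}^{1} x^{n}\cdot cx^{c-1}\,dx=\frac{c}{n+c},
\]
so with $c=r/(p-1)$ the $n$th moment of $\eta\!\left(r/(p-1)\right)$ equals $r/\bigl(n(p-1)+r\bigr)$. The $n$th moment of $\nu(p,r-1)$ is $\binom{np+r-1}{n}$ by construction (Theorem~\ref{cthepositive}), while the $n$th moment of $\mu(p,r)$ is $\binom{np+r}{n}\frac{r}{np+r}$ by the definition of the Raney sequence. Since the Mellin convolution multiplies moments, the $n$th moment of the left-hand side is
\[
\binom{np+r-1}{n}\cdot\frac{r}{n(p-1)+r}.
\]

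The main (and only nontrivial) computational step is the elementary identity
\[
\binom{np+r-1}{n}=\binom{np+r}{n}\cdot\frac{n(p-1)+r}{np+r},
\]
which follows from the one-line computation $\binom{np+r}{n}/\binom{np+r-1}{n}=(np+r)/(np+r-n)$. Substituting this into the product above yields
\[
\binom{np+r-1}{n}\cdot\frac{r}{n(p-1)+r}=\binom{np+r}{n}\cdot\frac{r}{np+r},
\]
which is exactly the $n$th moment of $\mu(p,r)$.

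Finally I would invoke uniqueness. The measure $\nu(p,r-1)$ has support in $[0,c(p)]$ by Theorem~\ref{cthepositive} and $\eta(r/(p-1))$ has support in $[0,1]$, so their Mellin convolution (the distribution of a product of independent bounded nonnegative variables) is supported in $[0,c(p)]$; likewise for $\mu(p,r)$ by the results of \cite{mpz2012}. Two probability measures on a bounded interval with identical moment sequences coincide, which gives the claimed equality. The only mild subtlety worth noting is the boundary case $r=p$, where $\nu(p,p-1)$ and $\mu(p,p)$ are both still well-defined and the above moment identity continues to hold verbatim, so no separate argument is needed.
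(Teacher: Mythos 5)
Your proposal is correct and follows essentially the same route as the paper: the authors likewise compute that the moment sequence of $\eta(c)$ is $\{c/(n+c)\}_{n\ge0}$ and then verify the single identity $\binom{np+r-1}{n}\cdot\frac{r}{n(p-1)+r}=\binom{np+r}{n}\frac{r}{np+r}$, leaving the moment-determinacy step implicit. Your write-up just makes the uniqueness argument and the boundary case explicit, which is fine but not a different method.
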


\begin{proof}
Since the moment sequence of $\eta(c)$ is $\left\{\frac{c}{n+c}\right\}_{n=0}^{\infty}$,
it is sufficient to note that
\[
\binom{np+r-1}{n}\cdot\frac{r}{n(p-1)+r}=\binom{np+r}{n}\frac{r}{np+r}.
\]
\end{proof}

Note that Theorem~5.1 in \cite{mlotkowski2010} is a consequence
of Theorem~\ref{cthepositive} and Proposition~\ref{fpropmellin}.

For a compactly supported probability measure $\mu$ on $\mathbb{R}$
define its \textit{moment generating function} by
\[
M_{\mu}(z):=\int_{\mathbb{R}}\frac{1}{1-xz}\,d\mu(x).
\]
For two such measures $\mu_1,\mu_2$ we define their
\textit{monotonic convolution} (due to Muraki \cite{muraki})
$\mu_1\vartriangleright\mu_2$ by
\[
M_{\mu_1\vartriangleright\mu_2}(z)=M_{\mu_1}\big(zM_{\mu_2}(z)\big)\cdot M_{\mu_2}(z).
\]
It is an associative, noncommutative operation on probability measures on $\mathbb{R}$.

Let $\mu$ be a probability measure with compact support contained
in the positive halfline $[0,+\infty)$ and $\mu\ne\delta_{0}$.
Then the \textit{$S$-transform}, defined by
\[
M_{\mu}\left(\frac{z}{1+z}S_{\mu}(z)\right)=1+z
\]
can be used to describe the dilation $\mathbf{D}_{c}\mu$,
the \textit{multiplicative free power} $\mu^{\boxtimes p}$,
the \textit{additive free power} $\mu^{\boxplus t}$
and the \textit{Boolean power} $\mu^{\uplus u}$ by:
\begin{align}
S_{\mathbf{D}_c\mu}(z)&=\frac{1}{c}S_{\mu}(z),\\
S_{\mu^{\boxtimes p}}(z)&=S_{\mu}(z)^{p},\\
S_{\mu^{\boxplus t}}(z)&=\frac{1}{t}S_{\mu}\left(\frac{z}{t}\right),\\
S_{\mu^{\uplus u}}(z)&=\frac{1}{u}S_{\mu}\left(\frac{z}{u+(u-1)z}\right).
\end{align}
These measures are well defined
(and have compact support
contained in the positive half-line) for $c,u>0$ and at least for $p,t\ge1$.
The Boolean power can be also described through the moment generating function:
\[
M_{\mu^{\uplus u}}(z)=\frac{M_{\mu}(z)}{u-(u-1)M_{\mu}(z)}.
\]
For example, $S_{\mu(p,1)}(z)=(1+z)^{1-p}$, which implies
that $\mu(p,1)=\mu(2,1)^{\boxtimes p-1}$.

Recall from \cite{mlotkowski2010} three formulas, which are
consequences of (\ref{bgenbdgenfunct}) and of the identity:
\[
\mathcal{B}_{p-r}\left(z\mathcal{B}_{p}(z)^r\right)=\mathcal{B}_{p}(z).
\]

\begin{proposition}\label{gpropbernoulli}
For $p\ge1$, $-1\le r\le p-1$ and $0\le a,b,s\le p$ we have
\begin{align}
\nu(p,0)=\mu(p,1)^{\uplus p}&=\left(\mu(2,1)^{\boxtimes p-1}\right)^{\uplus p},\\
\mu(p,a)\vartriangleright\mu(p+b,b)&=\mu(p+b,a+b),\label{fmumonotonic}\\
\nu(p,r)\vartriangleright\mu(p+s,s)&=\nu(p+s,r+s).\label{fnumonotonic}
\end{align}
In particular, if $0\le r\le p-1$ then
\[
\nu(p,r)=\nu(p-r,0)\vartriangleright\mu(p,r)
=\mu(p-r,1)^{\uplus p-r}\vartriangleright\mu(p,r).
\]
\end{proposition}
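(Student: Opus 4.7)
The plan is to verify each of the three identities by computing moment generating functions on both sides, relying on (\ref{bgenbdgenfunct}) and the functional equation $\mathcal{B}_{p-r}(z\mathcal{B}_p(z)^r)=\mathcal{B}_p(z)$. The functional equation itself is immediate: substituting $w=z\mathcal{B}_p(z)^r$ into the defining equation $\mathcal{B}_q(w)=1+w\mathcal{B}_q(w)^q$ with $q=p-r$ and comparing with $\mathcal{B}_p(z)=1+z\mathcal{B}_p(z)^p$ shows that $\mathcal{B}_{p-r}(w)$ and $\mathcal{B}_p(z)$ satisfy the same fixed-point equation, and so coincide.

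For the first equality in (1), I would apply the Boolean-power formula to $\mu(p,1)$, whose mgf is $\mathcal{B}_p$ by Lambert. Taking $u=p$ yields $M_{\mu(p,1)^{\uplus p}}(z)=\mathcal{B}_p(z)/(p-(p-1)\mathcal{B}_p(z))$, which is exactly $\mathcal{D}_{p,0}(z)=M_{\nu(p,0)}(z)$ by (\ref{bgenbdgenfunct}) at $r=0$. The second equality is then immediate from the identity $\mu(p,1)=\mu(2,1)^{\boxtimes p-1}$ already recalled in Section~1.

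For (\ref{fmumonotonic}), the monotonic convolution formula combined with $M_{\mu(q,t)}=\mathcal{B}_q^t$ gives $M_{\mu(p,a)\vartriangleright\mu(p+b,b)}(z)=\mathcal{B}_p(z\mathcal{B}_{p+b}(z)^b)^a\cdot\mathcal{B}_{p+b}(z)^b$; the functional equation with $q=p+b$ collapses $\mathcal{B}_p(z\mathcal{B}_{p+b}^b)$ to $\mathcal{B}_{p+b}$, so the product reduces to $\mathcal{B}_{p+b}(z)^{a+b}=M_{\mu(p+b,a+b)}(z)$. Identity (\ref{fnumonotonic}) follows by the same template: write $M_{\nu(p,r)\vartriangleright\mu(p+s,s)}(z)=\mathcal{D}_{p,r}(z\mathcal{B}_{p+s}^s)\cdot\mathcal{B}_{p+s}^s$, substitute (\ref{bgenbdgenfunct}) for $\mathcal{D}_{p,r}$, and apply the functional equation to every inner occurrence of $\mathcal{B}_p(z\mathcal{B}_{p+s}^s)$, converting each to $\mathcal{B}_{p+s}(z)$; the resulting rational expression in $\mathcal{B}_{p+s}$ must then be identified with $\mathcal{D}_{p+s,r+s}(z)$ via (\ref{bgenbdgenfunct}) applied in reverse.

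The ``In particular'' conclusion is then a direct specialization of (\ref{fnumonotonic}) at $(p-r,0,r)$ in place of $(p,r,s)$, followed by the substitution $\nu(p-r,0)=\mu(p-r,1)^{\uplus p-r}$ coming from (1). The main obstacle I anticipate lies in (\ref{fnumonotonic}): whereas the two sides of (\ref{fmumonotonic}) become pure powers of $\mathcal{B}_{p+b}$ once the functional equation is applied, the $\nu$-side of (\ref{fnumonotonic}) carries the non-trivial rational prefactor $1/(p-(p-1)\mathcal{B}_p)$, and careful bookkeeping is required to confirm that the denominator produced by $\mathcal{D}_{p,r}(z\mathcal{B}_{p+s}^s)$ matches the denominator $(p+s)-(p+s-1)\mathcal{B}_{p+s}$ prescribed by (\ref{bgenbdgenfunct}) applied to $\mathcal{D}_{p+s,r+s}$.
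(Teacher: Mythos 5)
Your overall route is exactly the one the paper intends: the paper gives no detailed argument for this proposition, saying only that the formulas are consequences of (\ref{bgenbdgenfunct}) and of the identity $\mathcal{B}_{p-r}\!\left(z\mathcal{B}_{p}(z)^r\right)=\mathcal{B}_{p}(z)$, and your treatment of the first display and of (\ref{fmumonotonic}) carries this out correctly and completely (the Boolean power of $\mathcal{B}_p$ gives $\mathcal{D}_{p,0}$, and the functional equation collapses $\mathcal{B}_p\!\left(z\mathcal{B}_{p+b}^{\,b}\right)$ to $\mathcal{B}_{p+b}$).

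However, the step you defer in (\ref{fnumonotonic}) --- the ``careful bookkeeping'' of the denominators --- is not a formality: it fails. Writing $B=\mathcal{B}_{p+s}(z)$, your own computation yields
\[
M_{\nu(p,r)\vartriangleright\mu(p+s,s)}(z)=\frac{B^{1+r+s}}{p-(p-1)B},
\qquad
\mathcal{D}_{p+s,r+s}(z)=\frac{B^{1+r+s}}{(p+s)-(p+s-1)B},
\]
and the denominators differ by $s(1-B)=-szB^{p+s}$, which is nonzero for $s>0$. No amount of bookkeeping can close this: the first moment of the left-hand side of (\ref{fnumonotonic}) is $(p+r)+s$, while that of $\nu(p+s,r+s)$ is $p+r+2s$, so the identity as printed cannot hold for $s>0$ (concretely, $\nu(2,0)\vartriangleright\mu(3,1)$ has moments $1,3,13,64,\dots$ whereas $\nu(3,1)$ has $1,4,21,120,\dots$). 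The ``in particular'' line inherits the same defect. What your computation actually establishes is the corrected relation: the left factor must carry the denominator of the \emph{target}, i.e. $\nu(p+s,r+s)=\rho\vartriangleright\mu(p+s,s)$ with $M_{\rho}(w)=\mathcal{B}_p(w)^{1+r}\big/\big((p+s)-(p+s-1)\mathcal{B}_p(w)\big)$; for $0\le r\le p-1$ this gives $\nu(p,r)=\mu(p-r,1)^{\uplus p}\vartriangleright\mu(p,r)$ (Boolean exponent $p$, not $p-r$), which one can check reproduces $\binom{np+r}{n}$. You should either record this corrected form or explicitly flag the discrepancy, rather than leaving the decisive denominator comparison unexamined.
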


The formulas (\ref{fmumonotonic}) and (\ref{fnumonotonic})
will be applied in the next section.

Finally we observe that $\nu(p,-1)$ and $\nu(p,0)$
are free convolutions powers of the Bernoulli distribution.

\begin{proposition}
For $p>1$ we have
\begin{align}
\nu(p,-1)&=\mathbf{D}_{c(p)}
\left(\frac{1}{p}\delta_{0}+\frac{p-1}{p}\delta_{1}\right)^{\boxtimes p},\\
\intertext{where $c(p)=p^p(p-1)^{1-p}$, and}
\nu(p,0)&=\mathbf{D}_{p}
\left(\left(\frac{1}{p}\delta_{0}+\frac{p-1}{p}\delta_{1}\right)^{\boxplus p/(p-1)}\right)^{\boxtimes p-1}.
\end{align}
\end{proposition}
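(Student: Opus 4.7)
The plan is to verify both identities via the $S$-transform: since $\nu(p,-1)$ and $\nu(p,0)$ are compactly supported on $[0,\infty)$ with atom at the origin strictly less than one (namely $1/p$ and $0$ respectively), each has a well-defined $S$-transform that determines the measure uniquely, so it suffices to check that the $S$-transforms of the two sides agree.

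First I would compute $S_{\mu_0}$ for the Bernoulli measure $\mu_0 := (1/p)\delta_0 + ((p-1)/p)\delta_1$: the moment generating function is $M_{\mu_0}(z) = (p-z)/(p(1-z))$, and solving $M_{\mu_0}(\tfrac{z}{1+z}S(z)) = 1+z$ for $S$ gives
\[
S_{\mu_0}(z) = \frac{p(1+z)}{p-1+pz}.
\]
Next I would compute $S_{\nu(p,r)}$ for $r\in\{-1,0\}$ using $M_{\nu(p,r)}(z) = \mathcal{D}_{p,r}(z) = \mathcal{B}_p(z)^{1+r}/(p-(p-1)\mathcal{B}_p(z))$ together with the Fuss functional equation $B = 1 + zB^p$, where $B = \mathcal{B}_p(z)$. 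Setting $M = M_{\nu(p,r)}(z)$ and solving for $B$ in terms of $M$, then substituting into $z = (B-1)/B^p$, expresses $z$ as a rational function of $M$; writing $M = 1+y$ and applying the formula $S(y) = (1+y)z/y$ produces
\[
S_{\nu(p,-1)}(y) = \frac{(p-1)^{p-1}(1+y)^p}{(p-1+py)^p}, \qquad
S_{\nu(p,0)}(y) = \frac{(p+(p-1)y)^{p-1}}{p^p(1+y)^{p-1}}.
\]

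To match the right-hand sides, I would apply the transformation rules listed just before the statement. For the first identity, $S_{\mu_0^{\boxtimes p}}(y) = S_{\mu_0}(y)^p$ combined with the dilation rule and $c(p) = p^p(p-1)^{1-p}$ delivers precisely $S_{\nu(p,-1)}$. For the second, the additive free power rule with $t = p/(p-1)$ yields $S_{\mu_0^{\boxplus p/(p-1)}}(y) = (p+(p-1)y)/(p(1+y))$; taking the $(p-1)$th multiplicative free power and then dilating by $p$ reproduces $S_{\nu(p,0)}$ after simplification. The main obstacle is purely bookkeeping: the inversion $M \mapsto z$ must be carried out carefully, and the exponents and prefactors involving $p$, $p-1$, $c(p)$ and $p/(p-1)$ must conspire exactly, with no conceptual subtlety beyond the legitimacy of the non-integer free powers $\mu_0^{\boxtimes p}$ and $\mu_0^{\boxplus p/(p-1)}$, which is covered by $p \ge 1$ and $p/(p-1) > 1$ as already remarked.
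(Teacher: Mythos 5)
Your proposal is correct and follows essentially the same route as the paper: both arguments compute $S_{\nu(p,-1)}$ and $S_{\nu(p,0)}$ from $\mathcal{D}_{p,r}$ via the Fuss equation (the paper does the inversion by the explicit substitutions $w=\tfrac{z}{(p-1)(1+z)}$ and $v=\tfrac{z}{p-z+pz}$), identify $S$ of the Bernoulli measure, and match using the dilation, $\boxtimes$ and $\boxplus$ rules. The $S$-transform formulas you state agree with those derived in the paper.
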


\begin{proof}
First we prove that
\begin{align}
S_{\nu(p,-1)}(z)&=\frac{(p-1)^{p-1}}{p^p}\left(\frac{1+z}{\frac{p-1}{p}+z}\right)^p,\\
S_{\nu(p,0)}(z)&=\frac{(p-1)^{p-1}}{p^p}\left(\frac{\frac{p}{p-1}+z}{1+z}\right)^{p-1}.
\end{align}
Indeed, putting $w=\frac{z}{(p-1)(1+z)}$ we have $1+w=\frac{pz+p-1}{(p-1)(1+z)}$.
Therefore
\[
\mathcal{D}_{p,-1}\left(w(1+w)^{-p}\right)=\frac{1}{p-(p-1)(1+w)}=1+z.
\]
Similarly, putting $v=\frac{z}{p-z+pz}$ we get $1+v=\frac{p(1+z)}{p-z+pz}$ and
\[
\mathcal{D}_{p,0}\left(v(1+v)^{-p}\right)=\frac{1+v}{p-(p-1)(1+v)}=1+z.
\]

Now it remains to note that the $S$-transform of
$\alpha \delta_{0}+(1-\alpha)\delta_{a}$, with $0<\alpha<1$, $a>0$,
is $\frac{1+z}{a(1-\alpha+z)}$ and recall that $S_{\mu(2,1)}(z)=(1+z)^{-1}$.
\end{proof}

Let us note two particular cases:
\begin{equation}
\nu(2,-1)=\mathbf{D}_{4}\left(\frac{1}{2}\delta_0+\frac{1}{2}\delta_{1}\right)^{\boxtimes 2}
\qquad\hbox{and}\qquad
\nu(2,0)=\mathbf{D}_{2}\left(\frac{1}{2}\delta_0+\frac{1}{2}\delta_{1}\right)^{\boxplus 2}.
\end{equation}

\section{The necessary conditions for positive definiteness}

This section is fully devoted to the necessary conditions for the positive definiteness
of the binomial (\ref{aintbinomial}) and Raney (\ref{aintraney}) sequences.

For the Raney sequence (\ref{aintraney}) we have
\begin{equation}
\binom{np+r}{n}\frac{r(-1)^n}{np+r}=\binom{n(1-p)-r}{n}\frac{-r}{n(1-p)-r}
\end{equation}
which yields (\ref{bbreflection}) and
\begin{equation}
\mu(1-p,-r)=\widehat{\mu(p,r)}.
\end{equation}
Therefore, if either
$p\ge1$, $0\le r\le p$ or $p\le0$, $p-1\le r\le0$
then the Raney sequence (\ref{aintraney}) is positive definite.
In addition, if $r=0$ then the sequence
is just $(1,0,0,\ldots)$, the moment sequence of $\delta_0$.

We are going to prove that these statements fully
characterize positive definiteness of these sequences.
We will need the following

\begin{lemma}\label{flemma}
Let $\left\{s_n\right\}_{n=0}^{\infty}$ be a sequence
of real numbers and let $w:(a,b)\to\mathbb{R}$ be a continuous function,
such that $w(x_0)<0$ for some $x_0\in(a,b)$
and there is $N$ such that $s_n=\int_{a}^{b} x^n w(x)\,dx$
for $n\ge N$. Then $\left\{s_n\right\}_{n=0}^{\infty}$
is not positive definite.
\end{lemma}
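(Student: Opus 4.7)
The plan is to argue by contradiction. Assume $\{s_n\}_{n=0}^\infty$ is positive definite; replacing $N$ by $N+1$ if necessary, I may take $N$ even (the hypothesis $s_n = \int_a^b x^n w(x)\,dx$ still holds for $n \ge N$). By the quadratic-form definition of positive definiteness, $\sum_{i,j} c_i c_j s_{i+j} \ge 0$ for every finitely supported real sequence $(c_i)$. I restrict attention to sequences with $c_i = 0$ for $i < N/2$; then every exponent $i+j$ that appears satisfies $i+j \ge N$, so the integral formula for $s_{i+j}$ applies term by term, and the inequality becomes
\[
0 \le \sum_{i,j \ge N/2} c_i c_j s_{i+j} = \int_a^b x^N Q(x)^2 w(x)\, dx
\]
for every polynomial $Q(x) = \sum_{j \ge 0} c_{j+N/2}\, x^j$. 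Hence it suffices to exhibit a single polynomial $Q$ for which this integral is strictly negative.

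To produce such a $Q$, I would first isolate a compact subinterval $[\alpha,\beta] \subset (a,b)$ with $0 \notin [\alpha,\beta]$ and $w \le -c$ on $[\alpha,\beta]$ for some constant $c > 0$. Continuity of $w$ at $x_0$ yields a neighborhood of $x_0$ on which $w$ is negative; if $x_0 \ne 0$ I shrink to a subinterval avoiding $0$, and if $x_0 = 0$ I simply replace $x_0$ by any nearby point of that neighborhood where $w$ is still negative. Since $N$ is even and $0 \notin [\alpha,\beta]$, there is $c' > 0$ with $x^N \ge c'$ throughout $[\alpha,\beta]$. Next I fix a continuous bump $\phi \colon \mathbb{R} \to [0,1]$ supported in $(\alpha,\beta)$ with $\phi \not\equiv 0$; then
\[
\int_a^b x^N \phi(x)^2 w(x)\, dx \le -cc'\int_\alpha^\beta \phi(x)^2\, dx < 0.
\]

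The remaining step is a routine polynomial approximation. In every application of the lemma in this paper the interval $(a,b)$ is bounded and $w$ is continuous (hence bounded) on its closure, so Weierstrass's theorem furnishes polynomials $Q_k$ with $Q_k \to \phi$ uniformly on $[a,b]$. Uniform convergence $Q_k^2 \to \phi^2$, combined with integrability of $x^N w(x)$ on $[a,b]$, yields $\int_a^b x^N Q_k^2 w\, dx \to \int_a^b x^N \phi^2 w\, dx < 0$, so for large $k$ the polynomial $Q_k$ violates the inequality derived above, contradicting positive definiteness. The only real obstacle is this last approximation step; in a fully general statement one would need to control tails when $(a,b)$ is unbounded, but this issue does not arise in the intended uses of the lemma.
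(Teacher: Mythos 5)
Your proof is correct, and it follows the same overall skeleton as the paper's: both arguments shift the index so that only exponents $\ge N$ (resp.\ $\ge 2N$) occur in the quadratic form, thereby reducing to a sequence that is genuinely the moment sequence of the signed density $x^{N}w(x)$ (resp.\ $x^{2N}w(x)$), and both ultimately rest on Weierstrass approximation on a compact interval. Where you diverge is in the final step. The paper argues abstractly: if the shifted sequence were positive definite it would be the moment sequence of a positive measure, and moment determinacy on $[a,b]$ (uniqueness in the Riesz representation theorem plus Weierstrass) forces that measure to coincide with $x^{2N}w(x)\,dx$, which is not positive --- contradiction. You instead exhibit an explicit witness: a continuous bump $\phi$ supported where $x^{N}w<0$, approximated uniformly by polynomials $Q_k$, so that $\sum_{i,j}c_ic_js_{i+j}=\int x^{N}Q_k^2w\,dx<0$ for large $k$. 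Your version is more elementary and self-contained (no appeal to Riesz), and it handles two points the paper glosses over: the degenerate case $x_0=0$ (where $x^{2N}w(x_0)=0$ rather than $<0$), and the need for $\int|x|^{N}|w|\,dx<\infty$ to pass to the limit. Both proofs tacitly require $(a,b)$ bounded --- the paper by writing $\mathcal{C}[a,b]$, you by saying so explicitly --- which is harmless since every application in Section~7 has $(a,b)=(0,c(p))$. The trade-off is that the paper's determinacy argument is shorter on the page, while yours is the one you could actually verify line by line without outside references.
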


\begin{proof}
Put $w_0(x):=w(x)x^{2N}$ and $a_n:=s_{2N+n}$.
Then $a_n:=\int_{a}^{b}w_{0}(x) x^{n}\,dx$ and
in view of the the uniqueness part of the Riesz representation theorem
for linear functionals on $\mathcal{C}[a,b]$
and of the Weierstrass approximation theorem,
the sequence $\left\{a_n\right\}_{n=0}^{\infty}$
is not positive definite. Since
\[
\sum_{i,j}a_{i+j}\alpha_{i}\alpha_{j}
=\sum_{i,j}s_{(N+i)+(N+j)}\alpha_{i}\alpha_{j},
\]
the sequence $\left\{s_n\right\}_{n=0}^{\infty}$
is not positive definite too.
\end{proof}

Now we are ready to prove the main result of this section.
Note that formulas (\ref{fmumonotonic}) and (\ref{fnumonotonic})
play key role in the proof.

\begin{theorem}\label{fthcharacterisation}
1. The binomial sequence (\ref{aintbinomial}) is positive definite if and only if either
$p\ge1$, $-1\le r\le p-1$ or $p\le0$, $p-1\le r\le0$.

2. The Raney sequence (\ref{aintraney}) is positive definite if and only if either
$p\ge1$, $0\le r\le p$ or $p\le0$, $p-1\le r\le0$ or else $r=0$.
\end{theorem}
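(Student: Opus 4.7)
Sufficiency is essentially established by what comes before. The binomial region $p\ge1$, $-1\le r\le p-1$ is Theorem~\ref{cthepositive}; its reflection $p\le0$, $p-1\le r\le 0$ follows from the identities (\ref{cbinomreflection})--(\ref{cnureflection}). The Raney region $p\ge1$, $0\le r\le p$ was proved in \cite{mlotkowski2010,mpz2012}; its reflection follows analogously, and the trivial case $r=0$ produces $\delta_{0}$.

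For necessity I plan to exploit two structural tools: the reflection identity and the forward-propagation supplied by (\ref{fmumonotonic})--(\ref{fnumonotonic}). The reflection lets me restrict to $p\ge\tfrac12$. Then, because monotonic convolution of probability measures is a probability measure, if $\nu(p+s,r+s)$ is not positive definite for some $s$ with $\mu(p+s,s)$ positive definite (i.e.\ $s\ge\max(0,1-p)$), then $\nu(p,r)$ is not positive definite either. Thus non-pd propagates \emph{along the diagonals} $r-p=\text{const}$, and it suffices to exclude a single point on each diagonal of the forbidden region.

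Two pools of representative points remain to be handled. (a) the ``wings'' $p\ge1$, $r\notin[-1,p-1]$; and (b) a residual sliver in the strip $\tfrac12\le p<1$ with $r\in(p-2,p-1]$, which cannot be pushed into a wing by (\ref{fnumonotonic}) since $(p+s,r+s)$ stays inside the allowed region for all admissible $s$. For (a), I would invoke Theorem~\ref{dmeijertheorem}: the Meijer-type function $V_{p,r}$ is defined for any rational $p>1$ and any real $r$, and satisfies $\binom{np+r}{n}=\int_{0}^{c(p)}x^{n}V_{p,r}(x)\,dx$ for all sufficiently large~$n$. Analyzing the Slater expansion of Theorem~\ref{dmeislaterth}, the sign of the leading Slater coefficient $c(h,k,l,r)$ near the endpoint $x=0$ (or $x=c(p)$) flips as $r$ crosses $p-1$ (respectively $-1$); the $p=2$ computation in the excerpt already exhibits this. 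Hence $V_{p,r}$ is negative on an interval and Lemma~\ref{flemma} delivers non-pd for rational $p$. The case of irrational $p$ is then captured by (\ref{fnumonotonic}) applied with $s$ chosen so that $p+s$ is rational. For (b), I would compute the $3\times 3$ Hankel minor $\det(\binom{(i+j)p+r}{i+j})_{0\le i,j\le 2}$ directly: already the calculation sketched for $(p,r)=(0.9,-0.6)$ suggests that this determinant is a polynomial in $(p,r)$ whose sign is negative throughout the sliver, which would suffice. The Raney statement is proved in parallel using (\ref{fmumonotonic}), and the bridge of Proposition~\ref{fpropmellin} can be used if convenient; the exceptional case $r=0$ is pd because the sequence is $(1,0,0,\ldots)$.

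\emph{Main obstacle.} The sign analysis of $V_{p,r}$ in~(a) and the uniform negativity of the Hankel minor in~(b) are both elementary in principle but must be carried out uniformly in the parameters. The less routine step is pairing the density-level argument (which is only available for rational $p>1$) with the irrational case; here monotonic convolution enters in an unusual role, turning a tool normally used to \emph{construct} measures into one that forbids their existence, which is presumably the ``quite surprising'' feature noted in the introduction.
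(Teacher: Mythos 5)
Your architecture for necessity is essentially the paper's: reduce to $p\ge\tfrac12$ by reflection, use the Slater expansion plus Lemma~\ref{flemma} to kill suitable rational points in the wings, and use the monotonic-convolution identities (\ref{fmumonotonic})--(\ref{fnumonotonic}) to propagate non-positive-definiteness along the diagonals $r-p=\mathrm{const}$ (this is indeed the ``surprising'' role of $\vartriangleright$). Two points, however, need repair. First, a caveat on the wings: the sign analysis of the leading Slater coefficient does \emph{not} show $V_{p,r}<0$ near $x=0$ for all $r\notin[-1,p-1]$; the coefficient $c(1,k,l,r)$ is negative only when exactly one Gamma factor in its denominator has argument in $(-1,0)$, which happens precisely in the windows $p-1<r<2p-1$ and $-p-1<r<-1$ (outside them two Gamma factors can be negative and the product turns positive again). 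So the density argument alone does not cover the wings even for rational $p$; you must use your ``one excluded point per diagonal'' principle to push every wing point into one of these windows with rational first coordinate --- which is exactly what the paper does, and which your propagation tool supports, but you should state the windows explicitly.

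The genuine gap is your item (b), the strip $\tfrac12\le p<1$ with $p-2\le r\le p-1$ (note $r=p-2$ also cannot be pushed into a wing, since it lands on the allowed boundary point $(1,-1)$). Your proposal rests on the unverified claim that the $3\times3$ Hankel determinant is negative throughout this two-parameter region. This is delicate and not ``elementary in principle'': along $r=p-1$ the moments tend to those of $\delta_1$ as $p\to1$, and along $r=p-2$ to those of $\delta_0$, so the determinant degenerates to $0$ at the corners; a numerical check at $(p,r)=(0.95,-0.05)$ gives roughly $-1.8\times10^{-5}$, i.e.\ the sign you need survives only by a vanishing margin, and establishing it uniformly would require a careful expansion near the corners. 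The paper avoids this entirely: the $2\times2$ minor condition $2p^2-2p-r-r^2\ge0$ already forces $-1\le r\le 0$ and $p$ close enough to $1$ that $0<1-p<\tfrac12$; then two consecutive multiples $n_0(1-p),(n_0+1)(1-p)$ both lie in the length-one interval $(r+1,r+2)$, which makes $\binom{n_0p+r}{n_0}$ and $\binom{(n_0+1)p+r}{n_0+1}$ both negative, and one of these is an even-indexed moment. You should replace your determinant computation by this (or an equally explicit) argument; as it stands, step (b) is a conjecture, not a proof.
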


\begin{figure}
\caption{The set of pairs $(p,r)$ for which the binomial sequence $\binom{np+r}{n}$
is positive definite. The dots and the vertical line at $p=2$ indicate
those parameters $(p,r)$ for which the density $V_{p,r}$ is an elementary function.}
\centering
\includegraphics[width=0.65\textwidth]{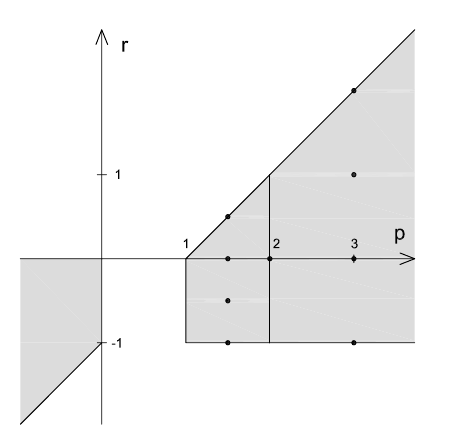}
\label{figura1}
\end{figure}

\begin{proof}
We need only to prove that these conditions are necessary.

First we note that the maps $(p,r)\mapsto(1-p,-1-r)$
and $(p,r)\mapsto(1-p,-r)$ are reflections with respect to
the points $(1/2,-1/2)$ and $(1/2,0)$ respectively.
Therefore it is sufficient to confine ourselves to $p\ge1/2$.
First we will show that if the binomial sequence (\ref{aintbinomial})
(resp. the Raney sequence \ref{aintraney}) is positive definite
and $p\ge1/2$ then $p\ge1$ (or $r=0$ in the case or Raney sequence).

We will use the fact that if a sequence $\left\{s_{n}\right\}_{n=0}^{\infty}$ is positive definite then
$s_{2n}\ge0$ and $\det \left(s_{i+j}\right)_{i,j=0}^{n}\ge0$ for all $n\in\mathbb{N}$.
Then the condition $\det \left(s_{i+j}\right)_{i,j=0}^{1}\ge0$
is equivalent to
\begin{align}
2p^2-2p-r-r^2&\ge0,\label{detbinom}\\
\intertext{for the binomial sequence (\ref{aintbinomial}) and}
r(2p-r-1)&\ge0\label{detraney}
\end{align}
for the Raney sequence (\ref{aintraney}).

Assume that $1/2\le p<1$, $r\in\mathbb{R}$
and that the binomial sequence (\ref{aintbinomial}) is positive definite.
Then (\ref{detbinom}) implies that $-1\le r\le0$
and $\frac{1}{2}+\frac{1}{2}\sqrt{1+r+r^2}\le p<1$,
which, in turn, implies $\frac{1}{2}+\frac{\sqrt{3}}{4}\le p<1$.
Hence $0<1-p<1/2$ and there exists $n_0$ such that
\[
r+1<n_0(1-p)<(n_0+1)(1-p)<r+2.
\]
This implies
\[
\binom{n_0 p+r}{n_0}=\frac{1}{n_0 !}\prod_{i=1}^{n_0}\big(n_0(p-1)+r+i\big)<0
\]
and
\[
\binom{(n_0+1)p+r}{n_0+1}=\frac{1}{(n_0+1)!}\prod_{i=1}^{n_0+1}\big((n_0+1)(p-1)+r+i\big)<0
\]
(the first factor is negative, all the others are positive),
which contradicts positive definiteness of the sequence
because one of the numbers $n_0,n_0+1$ is even.

Similarly, if  $1/2\le p<1$, $r\ne0$
and the Raney sequence is positive definite then
(\ref{detraney}) implies that $1/2<p<1$ and $0<r\le 2p-1$.
Hence we can choose $n_0$ in the same way as before, so that
\[
\binom{n_0 p+r}{n_0}\frac{r}{n_0 p+r}=\frac{r}{n_0 !}\prod_{i=1}^{n_0-1}\big(n_0(p-1)+r+i\big)<0
\]
and
\[
\binom{(n_0+1)p+r}{n_0+1}\frac{r}{(n_0+1)p+r}=\frac{r}{(n_0+1) !}\prod_{i=1}^{n_0}\big((n_0+1)(p-1)+r+i\big)<0,
\]
which contradicts positive definiteness of the Raney sequence (\ref{aintraney}).

So far we have proved that if $p\ge1/2$ and the binomial sequence (\ref{aintbinomial})
(resp. the Raney sequence (\ref{aintraney}))
is positive definite (and if $r\ne0$ for the case of the Raney sequence) then $p\ge1$.
For $p=1$ the conditions (\ref{detbinom}) and (\ref{detraney}) imply that
$-1\le r\le0$ and $0\le r\le1$ respectively.
From now on we will assume that $p>1$.

Now we will work with the Raney sequences.
Denote by $\Sigma_{R}$ the set of all pairs $(p,r)$ such that
$p\ge1$ and (\ref{aintraney}) is positive definite. By (\ref{detraney}),
if $(p,r)\in\Sigma_{R}$ then $r\ge0$.

Recall that if $p=k/l$, $1\le l<k$ and $r\ne0$ then we have
\[
\binom{np+r}{n}\frac{r}{np+r}=\int_{0}^{c(p)} x^n\cdot W_{p,r}(x)\, dx,
\]
$n=0,1,2,\ldots$, where for $W_{p,r}$ we have the following expression:
\begin{equation}\label{meislater}
W_{p,r}(x)=\widetilde{\gamma}(k,l,r)\sum_{h=1}^{k}\widetilde{c}(h,k,l,r)\,
{}_{k}F_{k-1}\!\left(\left.
\begin{array}{c}
\!\!\!\widetilde{\mathbf{a}}(h,k,l,r)\\
\!\!\!\widetilde{\mathbf{b}}(h,k,l,r)\end{array}\!
\right|z\right)
z^{(r+h-1)/k-1/l},
\end{equation}
where $z=x^l/c(p)^l$,
\begin{align}
\widetilde{\gamma}(k,l,r)&=\frac{r(p-1)^{p-r-3/2}}{p^{p-r}\sqrt{2\pi k }},\\
\widetilde{c}(h,k,l,r)&=\frac{\prod_{j=1}^{h-1}\Gamma\left(\frac{j-h}{k}\right)
\prod_{j=h+1}^{k}\Gamma\left(\frac{j-h}{k}\right)}
{\prod_{j=1}^{l}\Gamma\left(\frac{j}{l}-\frac{r+h-1}{k}\right)
\prod_{j=l+1}^{k}\Gamma\left(\frac{r+j-l}{k-l}-\frac{r+h-1}{k}\right)}.
\end{align}
This is a consequence of the proof of Theorem in \cite{mpz2012}.

Now we are going to prove that if $p=k/l>1$ and $p<r<2p$
then $W_{p,r}(x)<0$ for some $x>0$.
Indeed we have $-1<\frac{1}{l}-\frac{r}{k}<0$, $\frac{j}{l}-\frac{r}{k}>0$ for $j\ge2$
and $\frac{r+j-l}{k-l}-\frac{r+h-1}{k}>0$ for $j>l$.
Therefore  $\widetilde{c}(1,k,l,r)<0$ and then we can express $W_{p,r}$ as
\[
W_{p,r}(x)=x^{r/p-1}\left[\sum_{h=1}^{k}\phi_{h}(x)x^{(h-1)/p}\right],
\]
where $\phi_{h}(x)$ are continuous functions on $[0,c(p))$ and $\phi_{1}(0)<0$.
This implies that $W_{p,r}(x)<0$ if $x>0$ is sufficiently small
and therefore the sequence is not positive definite in this case

On the other hand, if $(p_0,r_0)\in\Sigma_{R}$, $t>0$ then
in view of (\ref{fmumonotonic}) we have $(p_{0}+t,r_{0}+t)\in\Sigma_{R}$,
namely
\[
\mu(p_0+t,r_0+t)=\mu(p_0,r_0)\rhd\mu(p_0+t,t_0).
\]
Hence, if we had $p_0<r_0$ then we could chose $t>0$
such that $p_0+t<r_0+t<2(p_0+t)$ and $p_0+t$ is rational,
which in turn implies that $(p_0+t,r_0+t)\notin\Sigma_{R}$.
This contradiction concludes the proof that $\Sigma_{R}=\{(p,r):p\ge1\hbox{ and } 0\le r\le p\}$.

Denote by $\Sigma_{b}$ the set of all pairs $(p,r)$ such that
$p\ge1$ and the binomial sequence (\ref{aintbinomial}) is positive definite.

If $p=k/l>1$ and $-p-1<r<-1$ then $\frac{j}{l}-\frac{r+1}{k}>0$ for $1\le j\le l$,
$-1<\frac{r+1}{k-l}-\frac{r+1}{k}<0$ and $\frac{r+j-l}{k-l}-\frac{r+1}{k}>0$ for $l+2\le j\le k$.
Therefore in formula (\ref{dmeislaterformula}) we have ${c}(1,k,l,r)<0$ and
$V_{p,r}(x)<0$ if $x>0$ is sufficiently small.
Consequently, in view of (\ref{dmeijerpsibinomial}), (\ref{dmeizgdopsi})
and Lemma~\ref{flemma}, $(p,r)\notin\Sigma_{b}$.

Now we can prove that if $(p_0,r_0)\in\Sigma_{b}$ then $r_0\ge-1$.
Indeed, if $r_0<-1$ then we can find
$t\ge 0$ such that $-p_0-t-1<r_0+t<-1$ and $p_0+t$ is rational.
By (\ref{fnumonotonic}) we have $(p_0+t,r_0+t)\in\Sigma_{b}$,
because
\[
\nu(p_0+t,r_0+t)=\nu(p_0,r_0)\rhd\mu(p_0+t,t_0),
\]
which is in contradiction with the previous paragraph.

Similarly we prove that if $(p_0,r_0)\in \Sigma_{b}$ then $r_0\le p_0-1$.
If $p=k/l>1$ and $p-1<r<2p-1$ then
$-1<\frac{1}{l}-\frac{r+1}{k}<0$, $\frac{j}{l}-\frac{r+1}{k}>0$ for $j\ge2$
and $\frac{r+j-l}{k-l}-\frac{r+1}{k}>0$ for $l+1\le j\le k$.
Hence $c(1,k,l,r)<0$, $V_{p,r}(x)<0$ if $x>0$ is small enough
and $(p,r)\notin\Sigma_{b}$.

Now, if $(p_0,r_0)\in\Sigma_{b}$, $p_0\ge0$ and $r_0>p_{0}-1$
then one can find some $t>0$ such that $r_{0}+t<2p_0+2t-1$ and $p_0+t$ is a rational number.
Then, in view of (\ref{fnumonotonic}), $(p_0+t,r_0+t)\in\Sigma_{b}$, which is in contradiction with the previous paragraph.
This concludes the whole proof.
\end{proof}

\begin{figure}
\caption{Density functions $V_{p,0}(x)$ for some values of $p$.}
\centering
\includegraphics[width=0.65\textwidth]{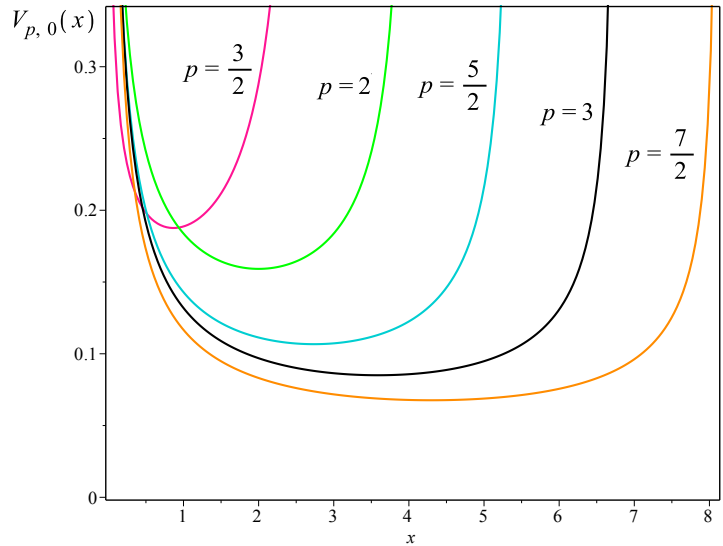}
\label{figura2}
\end{figure}

\begin{figure}
\caption{Some examples of $V_{p,r}$ for $p=3/2$.
Note that for $r=3/4$ and $1$ the function $V_{3/2,r}(x)$ has also negative values.}
\centering
\includegraphics[width=0.65\textwidth]{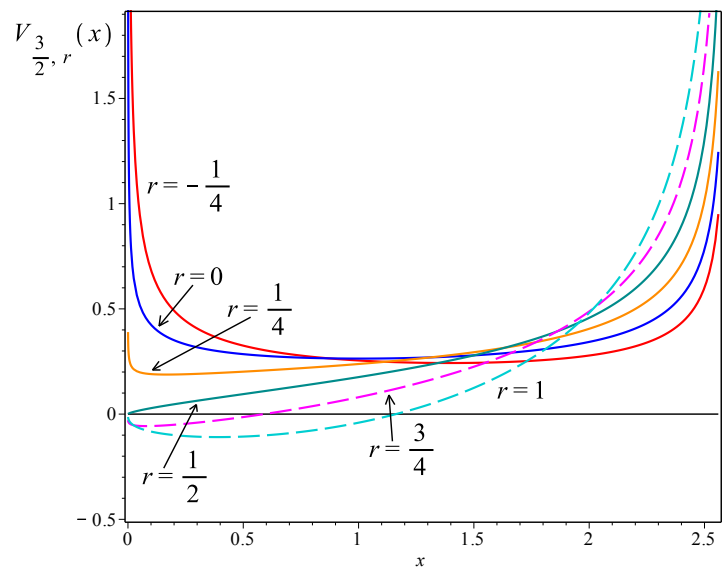}
\label{figura3}
\end{figure}

\begin{figure}
\caption{Some examples of $V_{p,r}(x)$ for $p=5/3$.}
\centering
\includegraphics[width=0.65\textwidth]{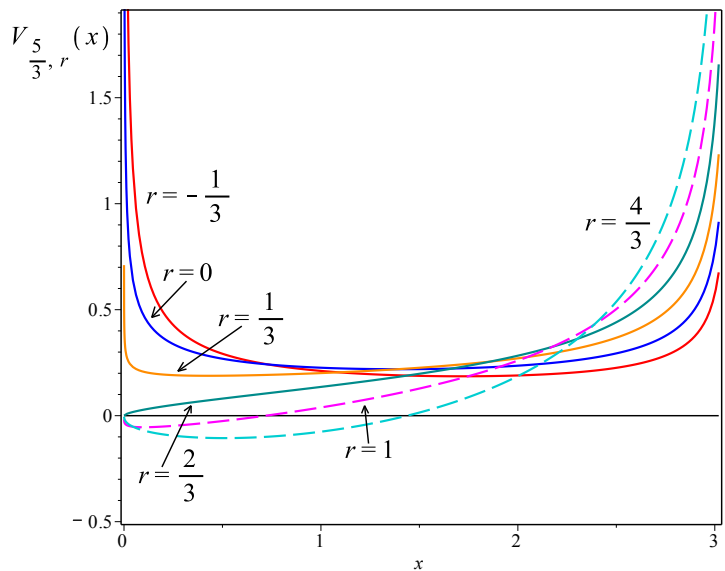}
\label{figura4}
\end{figure}

\begin{figure}
\caption{Some examples of $V_{p,r}(x)$ for $p=7/2$.}
\centering
\includegraphics[width=0.65\textwidth]{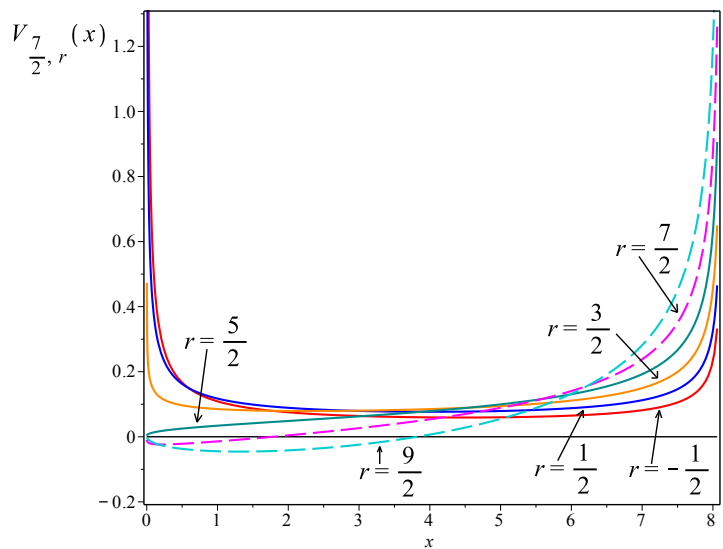}
\label{figura5}
\end{figure}

\begin{figure}
\caption{Some examples of $V_{p,r}(x)$ for $r=-3/2$.}
\centering
\includegraphics[width=0.65\textwidth]{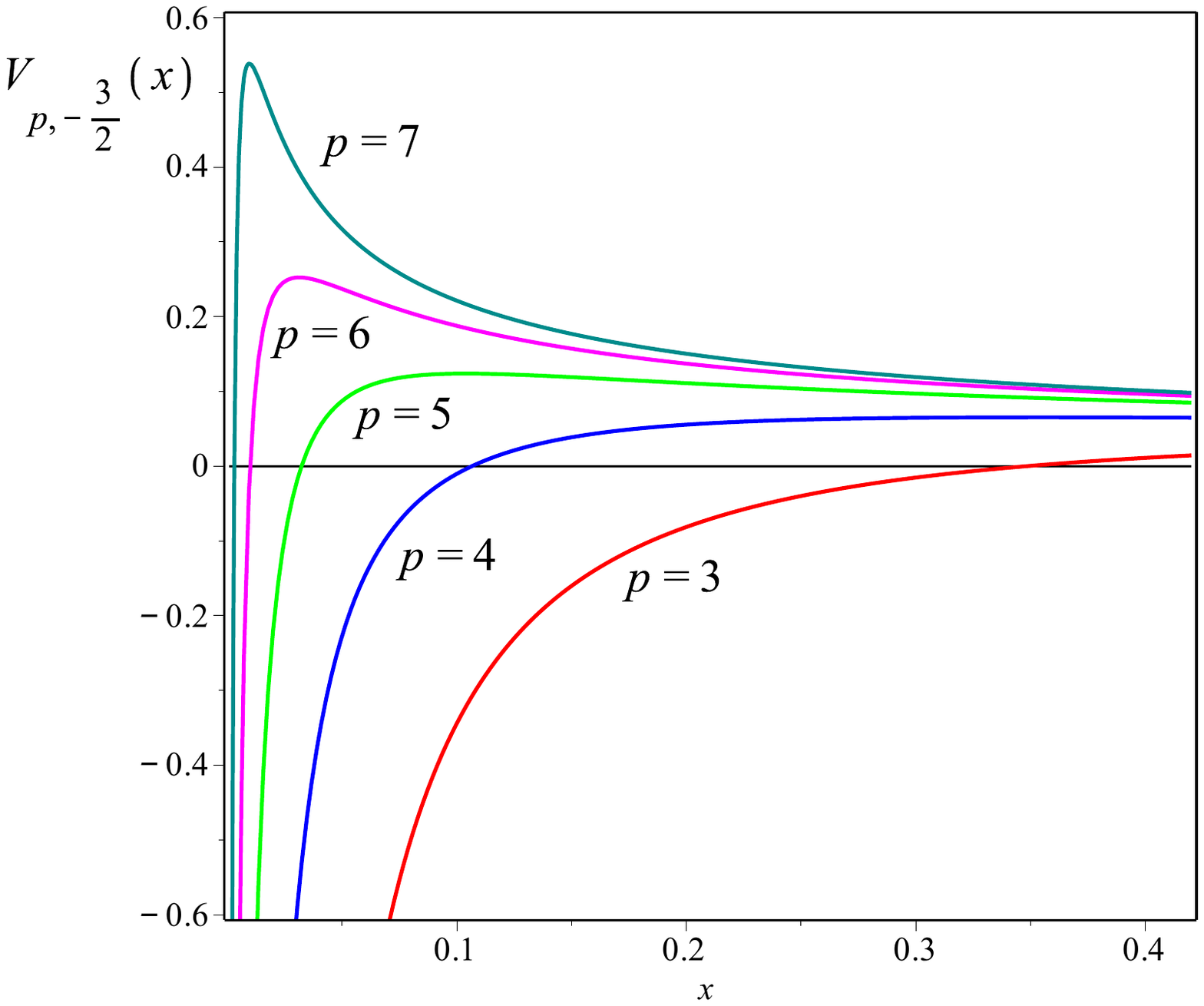}
\label{figura6}
\end{figure}

\section{Graphical illustrations of selected cases}

The formulas (\ref{dmeijerth}) and (\ref{dmeislaterformula}) alow us to study
the graphical representation of the function $V_{p,r}(x)$ for given $p=k/l>1$ and $r\in\mathbb{R}$.
Figure~\ref{figura2} shows $V_{p,0}(x)$ for $p=i/2$, $i=3,4,\ldots,7$.
Figures \ref{figura3}--\ref{figura5} illustrate $V_{3/2,r}$, $V_{5/3,r}$ and
$V_{7/2,r}$ for various choice of $r$, including $r>p-1$ when
$V_{p,r}(x)$ is negative for some $x$.
Those  $V_p,_r$'s which have negative parts are plotted with dashed lines.
Finally, in Figure~\ref{figura6}, we show graphs of $V_{p,-3/2}(x)$
for some values of $p$. Each of these functions is negative for some values of~$x$.

\end{document}